\documentclass[11pt,reqno]{amsart}
\usepackage{fullpage}
\usepackage[mathscr]{eucal}
\usepackage{mathrsfs}
\usepackage{amsfonts}
\usepackage{amsmath}
\usepackage{amsthm}
\usepackage{amssymb}
\usepackage{latexsym}
\usepackage{stmaryrd}
\usepackage[all]{xy}
\usepackage{array}
 

%
\usepackage{graphicx}

\usepackage[dvipsnames]{xcolor}
\newcommand{\red}[1]{{\color{red}#1}}

\usepackage{mathtools}

\usepackage{latexsym}
\usepackage[mathscr]{eucal}
\usepackage{palatino}

\hyphenation{speci-fically}
\usepackage[bookmarks=false]{hyperref}
\setcounter{tocdepth}{1}

\usepackage{hyperref}
\usepackage{xr-hyper}


\usepackage{graphicx}

\theoremstyle{plain}
\newtheorem{thm}[equation]{Theorem}
\newtheorem{cor}[equation]{Corollary}
\newtheorem{prop}[equation]{Proposition}
\newtheorem{lem}[equation]{Lemma}

\theoremstyle{definition}

\newtheorem*{defn}{Definition}

\theoremstyle{remark}

\newtheorem{examp}[equation]{Example}
\newtheorem{rem}[equation]{Remark}
\newtheorem{rems}[equation]{Remarks}

\makeatletter
\renewcommand{\subsection}{\@startsection{subsection}{2}{0pt}{-3ex
plus -1ex minus -0.2ex}{-2mm plus -0pt minus
-2pt}{\normalfont\bfseries}} \makeatother

\numberwithin{equation}{subsection}
\allowdisplaybreaks

\newlength{\dhatheight}

    

\newcommand{\op}{\operatorname}

\newcommand{\hdot}{{\:\raisebox{2pt}{\text{\circle*{1.5}}}}}
\newcommand{\idot}{{\:\raisebox{2pt}{\text{\circle*{1.5}}}}}

\newcommand{\rst}[1]{\ensuremath{{\mathbin|}\raise-.5ex\hbox{$#1$}}}  

\newcommand{\ce}{{\mathcal E}}

\DeclareMathOperator{\mmod}{\!\text{-}\!\op{mod}}
\DeclareMathOperator{\grmod}{\!\text{-}\op{grmod}}

\DeclareMathOperator{\Wh}{{\mathcal{W}\textit{h}}}

\DeclareMathOperator{\Tor}{\mathrm{Tor}}
\DeclareMathOperator{\Ext}{\mathrm{Ext}}
\DeclareMathOperator{\sym}{\mathrm{Sym}}

\DeclareMathOperator{\im}{\mathrm{Im}}
\DeclareMathOperator{\supp}{\mathrm{Supp}}

\DeclareMathOperator{\hol}{{\!\text{-}\mathrm{hol}}}
\DeclareMathOperator{\Ker}{\mathrm{Ker}}

\DeclareMathOperator{\End}{\mathrm{End}}

\DeclareMathOperator{\rk}{{\mathrm{rk}}\,\g}
\DeclareMathOperator{\gr}{\mathrm{gr}}

\DeclareMathOperator{\limp}{\underset{^{z\to0}}{\mathrm{lim}}}
\DeclareMathOperator{\Lie}{\mathrm{Lie}}

\DeclareMathOperator{\Ad}{\mathrm{Ad}}
\DeclareMathOperator{\ad}{\mathrm{ad}}

\DeclareMathOperator{\ord}{\mathrm{ord}}

\DeclareMathOperator{\Spec}{\mathrm{Spec}}

\DeclareMathOperator{\Q}{{\mathrm{Q}}}
\newcommand{\bt}{{\mathbf t}}
\newcommand{\F}{{\mathsf F}}
\newcommand{\iso}{{\;\stackrel{_\sim}{\to}\;}}
\newcommand{\cd}{\!\cdot\!}

\renewcommand{\mod}{{\,\operatorname{\textsl{mod}}\ }}
\newcommand{\erem}{\hfill$\lozenge$\end{rem}}
\newcommand{\eerem}{\hfill$\lozenge$\end{rem}\vskip 3pt }
\newcommand{\dis}{\displaystyle}

\newcommand{\beq}{\begin{equation}\label}
\newcommand{\eeq}{\end{equation}}
\def\ccirc{{{}_{\,{}^{^\circ}}}}
\DeclareMathOperator{\Hom}{\mathrm{Hom}}

\newcommand{\BN}{{\bar N }}
\newcommand{\Cat}{{\scr{C}_{_{\!}}\textit{at}}}
\newcommand{\BO}{\bar B }

\newcommand{\Nn}{\bar N }

\newcommand{\bn}{\bar\n }
\newcommand{\bnp}{{\n^\psi}}
\newcommand{\bnr}{{\bn_r }}
\newcommand{\fa}{{\mathfrak a}}

\newcommand{\cR}{{\mathcal R}}
\newcommand{\aff}{_{\op{aff}}}
\newcommand{\W}{{\mathcal W}}

\newcommand{\fm}{{\mathfrak m}}

\newcommand{\fZ}{{\mathfrak Z}}
\newcommand{\fk}{{\mathfrak k}}

\newcommand{\un}{\UU\n }

\newcommand{\HS}{{\mathbb{H}}^{\op{sph}}}
\newcommand{\HH}{{\mathbb{H}}}

\newcommand{\WW}{\wt W }

\newcommand{\BG}{{\mathbb G}}

\renewcommand{\th}{{\theta}}
\newcommand{\T}{{\mathcal T}}
\newcommand{\fD}{{\mathfrak D}}
\newcommand{\brp}{{\n_r^\psi}}

\newcommand{\A}{{\mathfrak A}}

\newcommand{\hsph}{\nil(\t\aff,\wt W)^{\op{sph}}}

\newcommand{\zh}{{Z_\hb\g}}

\newcommand{\bN}{{\bar N}}

\renewcommand{\o}{\otimes }
\newcommand{\bplus}{\mbox{$\bigoplus$}}
\newcommand{\ccong}{\ \cong \  }

\newcommand{\ds}{\!\sslash \!}
\newcommand{\dss}{\hskip-4pt\fatslash }
\newcommand{\drs}{\hskip-2pt\fatbslash\hskip-2pt }

\newcommand{\wt}{\widetilde }
\newcommand{\Om}{\Omega }

\newcommand{\BX}{{{\mathbb X}}}
\newcommand{\bxp}{{{\mathbb X}^{++}}}

\newcommand{\Y}{{\mathcal Y}}

\newcommand{\taff}{(\T^*\tb)_{\op{aff}}}
\newcommand{\bac}{\backslash }
\newcommand{\pt}{{\operatorname{pt}}}
\newcommand{\pe}{{\mathbf e}}
\newcommand{\nil}{{\mathcal H}}

\newcommand{\BM}{{\mathbb M}}
\newcommand{\bmg}{{{\mathbb M}_G}}
\newcommand{\mmg}{{\mathcal M}}
\newcommand{\hpp}{{\mathbb W}}
\newcommand{\hph}{{\mathbb W}_\hb }

\newcommand{\Ups}{\Sigma }

\newcommand{\chb}{\C[\hb]}
\newcommand{\ho}{|_{\hb=0} }

\newcommand{\scr}[1]{\mathscr{#1}}

\def\ccirc{{{}_{^{\,^\circ}}}}

\newcommand{\mmid}{\enspace\big|\enspace}

\newcommand{\ut}{{\mathcal{U}\t}}
\newcommand{\la}{\lambda}
\newcommand{\be}{\beta }

\newcommand{\ks}{ \bar K }

\newcommand{\cmu}{{\check \mu}}

\newcommand{\calph}{{\check \alpha}}

\newcommand{\s}{{\mathcal S}}

\newcommand{\zg}{{Z\g}}

\newcommand{\dd}{{\mathscr{D}}}
\renewcommand{\dh}{{\mathscr{D}_\hb}}
\newcommand{\oo}{{\mathcal{O}}}

\newcommand{\ug}{{\mathcal{U}}\mathfrak{g}}
\newcommand{\UU}{{\mathcal{U}}}

\newcommand{\tg}{{\widetilde \g}}

\newcommand{\js}{ J^{\op{sph}}}

\newcommand{\bbe}{{\mathsf e}}
\newcommand{\bbf}{{\mathsf f}}
\newcommand{\bbh}{{\mathsf h}}

\newcommand{\ca}{{\mathcal A}}

\newcommand{\sll}{{\mathfrak{sl}_2}}

\renewcommand{\t}{{\mathfrak t}}
\newcommand{\fn}{{\mathfrak n}}

\newcommand{\vpi}{\varpi }
\newcommand{\tb}{{\wt\bb}}

\newcommand{\hb}{\hbar }
\newcommand{\uh}{{\mathcal{U}_\hb}}

\newcommand{\Gr}{{\mathsf{Gr}}}

\newcommand{\cg}{{\mathcal{Z}}}

\newcommand{\al}{\alpha }

\newcommand{\fc}{{\mathfrak{c}}}

\newcommand{\sh}{{\operatorname{S}}_\hb}
\newcommand{\vkap}{\varkappa }

\newcommand{\kap}{\kappa }

\newcommand{\eps}{\epsilon }

\newcommand{\C}{\mathbb{C}}
\newcommand{\g}{\mathfrak{g}}
\renewcommand{\b}{{\mathfrak{b}}}
\newcommand{\n}{\mathfrak{n}}

\newcommand{\h}{\mathfrak{h}}
\newcommand{\La}{\Lambda }

\newcommand{\bb}{{\mathcal B}}

\newcommand{\inv}{^{-1}}
\newcommand{\reg}{^{\operatorname{reg}}}
\newcommand{\cf}{{\mathcal F}}
\newcommand{\Z}{{\mathbb Z}}
\setlength{\unitlength}{1pt}

\newcommand{\en}{{\enspace}}

\newcommand{\vi}{${\en\sf {(i)}}\;$}
\newcommand{\vii}{${\;\sf {(ii)}}\;$}
\newcommand{\viii}{${\sf {(iii)}}\;$}
\newcommand{\iv}{${\sf {(iv)}}\;$}

\newcommand{\sset}{\subset}
\newcommand{\sminus}{\smallsetminus}
\newcommand{\intoo}{\,\xymatrix{\ar@{^{(}->}[r]&}\,}
\newcommand{\ontoo}{\,\xymatrix{\ar@{->>}[r]&}\,}
\newcommand{\into}{\,\hookrightarrow\,}
\newcommand{\too}{\,\longrightarrow\,}
\newcommand{\mto}{\mapsto}
\newcommand{\onto}{\,\,\twoheadrightarrow\,\,}

\newcommand{\Ga}{\Gamma }
\newcommand{\gm}{{\C^\times}}
\newcommand{\half}{\mbox{$\frac{1}{2}$}}

\title{Nil Hecke algebras and Whittaker $\dd$-modules}
\author{Victor Ginzburg}
\address{Department of Mathematics, University of Chicago,  Chicago, IL  60637, USA.}
\email{ginzburg@math.uchicago.edu}
\begin{document}
\maketitle
\begin{flushright} {\em To the memory of Bertram Kostant}
\end{flushright}
\bigskip

\begin{abstract}  Given a semisimple group $G$, Kostant and Kumar defined a  nil Hecke algebra 
that may be viewed as a  degenerate version of the double affine nil Hecke algebra introduced by Cherednik.
In this paper, we construct an isomorphism of the spherical subalgebra of the nil Hecke algebra with a Whittaker type
quantum Hamiltonian reduction of the algebra of differential operators on $G$. 
This result has an interpretation in terms of  geometric Satake and the Langlands dual group.
{Specifically}, the isomorphism provides a bridge between very differently looking descriptions
 of equivariant Borel-Moore homology 
of the affine flag variety (due to Kostant and Kumar) and 
of the affine Grassmannian (due to Bezrukavnikov and Finkelberg),
respectively. 

It follows from our result that the category of Whittaker $\dd$-modules on
$G$, considered by Drinfeld, is equivalent to the category of holonomic modules over the
 nil Hecke algebra,  and it is also equivalent to a certain subcategory of the category of 
Weyl group equivariant holonomic 
$\dd$-modules on the maximal torus.
\end{abstract}

{\small
\tableofcontents
}
\section{Introduction}
\subsection{Reminder on nil-Hecke algebras}\label{ssec1}

In this paper, we work over $\C$.
We use the notation $\sym \fk$, resp. $\UU\fk$, for the symmetric, resp. enveloping, algebra  of a vector space,
resp.   Lie algebra, $\fk$. 
Let  $\T^*X$, resp. $\dd_X$ and $\dd(X)$, denote the cotangent bundle,
resp. the sheaf and ring of algebraic differential operators, on a smooth algebraic variety $X$.
Throughout the paper, we fix a complex connected and simply
connected semisimple group $G$ with Lie algebra $\g$. 

Let $\h$ be a finite-dimensional vector space,
$\cR\sset \h^*$ a reduced root system with the set $\Ups$ of simple roots.
An associated Coxeter group $\W$ acts naturally on
$\h$ and it is generated by a set $s_\al,\ \al\in\Ups$, of simple reflections.
In their work on equivariant cohomology of flag varieties, 
Kostant and Kumar \cite{KK1}--\cite{KK2} introduced a noncommutative $\Z$-graded 
algebra 
$\nil(\h,\W)$ called  a  nil Hecke algebra,  cf.
 Section  \ref{daha-sec} for an overview.
The algebra $\nil(\h,\W)$ is generated
by the vector space $\h$ and  a collection,
$\th_\al,\ \al\in \Ups\aff$, of  Demazure elements.
The elements of $\h$ pairwise commute and  generate a copy of  the algebra $\sym\h$ inside $\nil(\h,\W)$.
 Demazure elements satisfy the braid relations.
The other  defining relations among the generators of  $\nil(\h,\W)$ are as follows , cf. ~\eqref{def-rel}:
\[(\th_\al)^2=0,\qquad \th_\al\cdot s_\al(h)- h\cdot\th_\al=\langle \al,h\rangle,\qquad \forall\ h\in\h,\,\al\in\Ups.\]

Let $T$ be the  (abstract) maximal
torus of $G$ and  $\t=\Lie T$. Let  $\BX^*=\BX^*(T)$ be the weight lattice and  $W$ 
the abstract  Weyl group.
Further, let $\t\aff$ be the affine  Cartan algebra, $W\aff$ the affine Weyl group,
and  $\nil(\t\aff, W\aff)$ the corresponding  nil Hecke algebra.
It is convenient to enlarge the group $W\aff$ and consider
$\wt W=W\ltimes \BX^*$, an extended affine Weyl group.
Similarly, there is an enlargement, $\nil(\t\aff,\wt W)$,
of  $\nil(\t\aff, W\aff)$.
This is  a $\Z$-graded $\chb$-algebra that may  be viewed as a 
degeneration  of the {\em nil DAHA}  introduced by  Cherednik and
 studied further by Fegin and Cherednik ~\cite{CF}.

Below, we will mostly be interested in the algebra
$\HH:=\nil(\t\aff,\wt W)|_{\hb=1}$, a specialization of
 $\nil(\t\aff,\wt W)$ at $\hb=1$.
The grading on $\nil(\t\aff,\wt W)$ induces an ascending $\Z$-filtration on $\HH$.
 By construction, one has
$\gr\HH=\nil(\t\aff,\wt W)|_{\hb=0}$, resp. $\HH_\hb=\nil(\t\aff,\wt W)$,
where $\gr A$, resp. $A_\hb$, denotes  an associated graded, resp.  Rees algebra, of a
filtered algebra $A$.
The algebra $\HH$ 
is a kind of (micro)-localization of the cross product $W\ltimes \dd(T)$.
In particular, there are algebra embeddings
$\sym\t\into\dd(T)\into \HH$, where $\sym\t$ is identified with
the algebra of  translation invariant differential operators
on the torus $T$. The filtration on $\HH$ agrees with the natural filtration on
$\dd(T)$ by  order of the differential operator.

Let  $\pe=\frac{1}{\#W}\sum_{w\in W}\, w\in\C W$ be  the symmetrizer idempotent.  The algebra  $\HS=\pe\HH\pe$, called {\em spherical subalgebra},
is a filtered algebra  with unit $\pe$. The embeddings above 
restrict to algebra embeddings
$(\sym\t)^W\into\dd(T)^W\into \HS$.

\subsection{Spherical subalgebra via Hamiltonian reduction}\label{hamintro}
Let $K$ be a  linear algebraic group and $X$  a smooth  $K$-variety.
Given a character
$\chi: \fk=\Lie K\to \C$, we write $\fk^\chi$ for the image of  $\fk$ in $\sym\fk$, resp.
$\UU\fk, \dd(X)$, etc., under the map
 $k\mto k-\chi(k)$.

Throughout the paper, we fix
 a maximal  unipotent subgroup  $N$ of $G$ and 
 a nondegenerate character  $\psi:  \n=\Lie N\to\C$.
Let $N\times N$ act on $G$ by $(n_l,n_r): g\mto n_l\inv g n_r$.
We write $N_l$, resp. $N_r$, for the first, resp. second, factor of  $N\times N$.
(We will use subscripts `$l$' and `$r$' in
other similar contexts.)
Thus, we get a pair,  $ \n^\psi_l$ and  $\n^\psi_r$, of
 commuting Lie subalgebras of $\dd(G)$. We define
the following  quantum Hamiltonian reduction
\[\hpp := 
\big(\dd(G)/\dd(G)(\n^\psi_l+\n^{\psi}_r)\big)^{ N_l\times N_r}.
\]
This  is an associative algebra that comes equipped with a natural
ascending $\Z$-filtration, called
Kazhdan filtration, cf.  Section  \ref{ham-sec}. 

We identify
$Z(\ug)$, the center of $\ug$, with the algebra of  $G$-bi-invariant
differential operators on $G$. 
The embedding of  the algebra of $G$-bi-invariant
differential operators into $\dd(G)$ induces an injective homomorphism
$Z(\ug)\to\hpp$ of filtered algebras. 

One of our main results reads as follows:
\begin{thm}\label{main-intro} There is an isomorphism 
$\hpp\iso\HS$, of
filtered algebras, that
maps the subalgebra $Z(\ug)$ $\sset \hpp\ $ to  the subalgebra $\ (\sym\t)^W\sset\HS$. The resulting map $Z(\ug)\to (\sym\t)^W$ is
the Harish-Chandra isomorphism.
\end{thm}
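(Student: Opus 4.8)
The plan is to construct the isomorphism by comparing associated graded algebras, then lifting to the filtered level. First I would study the classical limit. On the graded side, $\gr\HS = \pe\cdot\gr\HH\cdot\pe = \pe\cdot\nil(\t\aff,\wt W)|_{\hb=0}\cdot\pe$, which by the Kostant--Kumar description of the nil Hecke algebra is the algebra of $W$-invariant functions on a suitable affine scheme; concretely it should be $\C[\T^*T]^W = (\sym\t\o\C[T])^W$, the functions on $(\T^*T)/W$. On the Hamiltonian reduction side, $\gr\hpp$ is the coordinate ring of the classical Whittaker reduction $\mu^{-1}(\psi_l+\psi_r)\ds(N\times N)$ of $\T^*G$, where $\mu: \T^*G\to\n^*\times\n^*$ is the moment map for the $N\times N$-action. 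So the first key step is a purely classical statement: the affine GIT quotient $(\T^*G\ds_{(\psi,\psi)}(N\times N))$ is isomorphic to $\T^*T/W$. This is a Whittaker analogue of the classical fact that $\T^*G\ds_{(0,0)}(N\times N)$ relates to $\T^*T$, and it should follow from the Kazhdan--Kostant slice: using the Kostant section for the generic semisimple regular locus, $\mu^{-1}(\psi,\psi)$ is identified, after quotienting by $N\times N$, with the universal centralizer-type space which for the torus degenerates to $\T^*T/W$. I would verify this via the isomorphism $G\cong N\times T\times N_-$ on an open set and explicit coordinates, being careful about which Borel the character $\psi$ is compatible with.

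Next, I would upgrade the classical isomorphism to a filtered (quantized) one. The strategy is that both $\hpp$ and $\HS$ are flat quantizations of the same Poisson variety $\T^*T/W$ with its induced symplectic structure, so it suffices to produce a filtered algebra map in one direction and check it induces the above isomorphism on $\gr$. The natural candidate map goes from $\dd(T)^W$ and $(\sym\t)^W$ into both sides and should extend: on $\HS$ these sit by the embeddings stated in the excerpt, $(\sym\t)^W\into\dd(T)^W\into\HS$, and on $\hpp$ the radial-parts / Harish-Chandra style reduction of $\dd(G)$ along the Whittaker ideals produces differential operators on $T$ together with the center $Z(\ug)$. I would use the decomposition $\dd(G)$ restricted to the open Bruhat cell, take the Whittaker coinvariants, and identify the result with a localization of $W\ltimes\dd(T)$ — which is precisely the description of $\HH$ (and hence $\HS$ after applying $\pe$) given in the excerpt as "a kind of (micro)-localization of the cross product $W\ltimes\dd(T)$." Matching these two presentations gives the algebra map; flatness of both filtrations plus the graded isomorphism from the first step forces it to be an isomorphism.

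Finally, the statement about $Z(\ug)$ and Harish-Chandra. The embedding $Z(\ug)\into\hpp$ is by $G$-bi-invariant operators; under the reduction-to-$T$ picture, a bi-invariant operator acts on the Whittaker-reduced space as its radial part along $T$, and the classical Harish-Chandra homomorphism $Z(\ug)\iso(\sym\t)^W$ (with the $\rho$-shift) is exactly the symbol-level version of this. So I would check that the composite $Z(\ug)\to\hpp\iso\HS$ lands in $(\sym\t)^W\sset\HS$ by a symbol computation and then identify it with Harish-Chandra by uniqueness of the $W$-equivariant extension, or by direct comparison on a generating set such as the Casimir.

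The main obstacle I expect is the first step done \emph{with the Whittaker twist and at the quantized level simultaneously}: precisely controlling the Whittaker reduction $\dd(G)/\dd(G)(\n^\psi_l+\n^\psi_r)$ as a sheaf/module, showing the $N\times N$-invariants form a flat filtered algebra of the expected size (no collapsing, Kazhdan filtration behaves well), and pinning down the microlocalization so that it matches Kostant--Kumar's combinatorial nil Hecke presentation rather than just $W\ltimes\dd(T)$ itself. The bookkeeping with the two characters $\psi_l,\psi_r$ and the interaction with the affine structure on $\t\aff$ versus $\t$ is where the real care is needed.
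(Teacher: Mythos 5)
There are two genuine gaps here. First, your classical step is not correct as stated: the Whittaker reduction $\T^*G\dss(N_l\times N_r,\psi\times\psi)$ is \emph{not} $\T^*T/W$, but the universal centralizer $\fZ$ (Corollary \ref{2d}), and by Theorem \ref{bfm} one has $\fZ\cong\big(\Spec\C[T\times\t^*,\frac{t^\al-1}{\calph},\al\in R]\big)/W$, i.e.\ the $W$-quotient of an affine blow-up of $T\times\t^*$, which is only birational to $(\T^*T)/W$ (already for $SL_2$ the two differ: $\fZ$ is smooth symplectic while $(\T^*T)/W$ has quotient singularities). Correspondingly $\gr\HS$ is $\C[T\times\t^*,\frac{t^\al-1}{\calph}]^W$ by Proposition \ref{hh-class}, strictly larger than $\C[\T^*T]^W$ because of the Demazure elements. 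So the graded comparison you want to start from is the BFM isomorphism together with Theorem \ref{class-intro}, not an identification of both sides with $\C[\T^*T]^W$.

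Second, and more seriously, the lifting strategy "produce a filtered map, check it on $\gr$, conclude isomorphism by flatness" does not go through, because the Kazhdan filtration and the grading on $\hpp$ (and on $\HS$) are not bounded below; the paper states explicitly that Theorem \ref{class-intro} neither implies nor is a simple consequence of Theorem \ref{main-intro} for exactly this reason. Moreover the filtered map itself is not available by "radial parts on the open Bruhat cell": the paper says it does \emph{not} know how to construct the homomorphism $\dd(T)^W\to\hpp$ directly, even quasi-classically (where it amounts to the Beilinson--Kazhdan construction). The actual proof has to go through $\dd(G/\bar N)\dss\n_l^\psi\cong\dd(T)$ with the Gelfand--Graev $W$-action (Theorem \ref{HDD}, Proposition \ref{w-act}), the Miura bimodule $\BM\cong\sym\t\o_{Z\g}\hpp$, the extension criterion of Proposition \ref{hh-morita} (which needs Lonergan's descent result to define the action of $\th_{\al_0}$), and then injectivity/surjectivity of $\tau:\HS_\hb\to\hpp_\hb$ via torsion-freeness, Kumar's characterization Theorem \ref{ku}, and the $\hb$-adic decomposition $\hpp_\hb=Z_\hb\g\oplus\ks$ of Lemma \ref{hn} — the last being precisely the delicate replacement for the naive graded-to-filtered argument your proposal relies on. These ingredients, not the bookkeeping of the two characters, are the missing substance.
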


Theorem \ref{main-intro} has a classical (a.k.a. Poisson) counterpart that involves the moment map
$\mu_{\n\times\n}: \T^*G\to \n^*\times\n^*$, associated with the Hamiltonian $N_l\times N_r$-action on $\T^*G$ induced
by the one  on $G$. The variety $\fZ=\mu\inv(\psi\times\psi)\ds(N_l\times N_r)$,
a classical Hamiltonian reduction, comes equipped with  the structure of an integrable system.
Specifically, $\fZ$  is a smooth
symplectic algebraic variety equipped with a  natural smooth
Lagrangian fibration $\kap: \fZ\to \g^*\ds\!\Ad^* G$ whose fibers are abelian algebraic groups. The resulting
 group scheme on $\g^*\ds\!\Ad^* G$ is known as the {\em universal centralizer}, cf.  Section  \ref{cent-sec}
for a review. Our second result, to  be proved in  Section  \ref{class-intro-pf}, reads as follows:

\begin{thm}\label{class-intro} There is an  isomorphism
$\gr\hpp\ccong \C[\fZ]$
of graded Poisson algebras, which restricts to an isomorphism
$\gr(Z\g)\ccong \kap^*(\C[\g^*\ds\!\Ad^* G])$, of maximal commutative subalgebras
\end{thm}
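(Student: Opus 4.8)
The plan is to run the standard comparison between a quantum Hamiltonian reduction and its classical limit, organised around a Chevalley--Eilenberg complex. The starting point is the (well-known) identification $\gr\dd(G)=\C[\T^*G]$ of graded Poisson algebras for the Kazhdan filtration; the degree shift built into that filtration is exactly what makes the commutator induce the Poisson bracket of the symplectic form on $\T^*G$. The feature of the Kazhdan normalisation that I would exploit is that, for $x$ in $\n^\psi_l\oplus\n^\psi_r$, the principal symbol of $x-\psi(x)\in\dd(G)$ is the function $\mu_{\n\times\n,x}-\psi(x)$ on $\T^*G$ --- the $x$-component of the \emph{shifted} moment map, not merely of $\mu_{\n\times\n}$. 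Hence the symbols of a basis of $\n^\psi_l\oplus\n^\psi_r$ are precisely the $2\dim\n$ scalar equations cutting out the subscheme $Y:=\mu_{\n\times\n}^{-1}(\psi\times\psi)\subset\T^*G$.

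First I would show that $\gr\big(\dd(G)(\n^\psi_l+\n^\psi_r)\big)$ is the (radical) ideal of $Y$, so that $\gr B=\C[Y]$ for $B:=\dd(G)/\dd(G)(\n^\psi_l+\n^\psi_r)$. Granting the review of the universal centralizer in Section \ref{cent-sec} --- in particular that $\fZ=Y\ds(N_l\times N_r)$ is smooth symplectic of dimension $2\dim T$ and that $N_l\times N_r$ acts freely on $Y$, which is where nondegeneracy of $\psi$ enters --- it follows that $Y$ is smooth of dimension $2\dim T+2\dim\n=\dim\T^*G-2\dim\n$, hence a reduced complete intersection. A system of $2\dim\n$ functions cutting out a complete intersection of codimension $2\dim\n$ in the Cohen--Macaulay ring $\C[\T^*G]$ is automatically a regular sequence, and the usual Koszul/Rees argument upgrades this to the equality $\gr B=\C[Y]$.

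Next, since $N_l\times N_r$ is connected and unipotent, $\hpp=B^{N_l\times N_r}=H^0(\n_l\oplus\n_r,B)$, and I would compute it from the Chevalley--Eilenberg complex $C^\bullet:=B\otimes\Lambda^\bullet(\n_l\oplus\n_r)^*$, a bounded complex of Kazhdan-filtered spaces with filtered differential. Its associated graded is the Chevalley--Eilenberg complex of $\C[Y]$ for the $N_l\times N_r$-action. Because that action is free and $\fZ$ is affine, the bundle $Y\to\fZ$ is trivial, so $\C[Y]\cong\C[\fZ]\otimes\oo(N_l\times N_r)$ with $\oo(N_l\times N_r)$ the regular representation; by Shapiro's lemma $H^\bullet(\n_l\oplus\n_r,\gr C^\bullet)\cong\C[\fZ]$, concentrated in cohomological degree $0$. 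The spectral sequence of the filtered complex $C^\bullet$ therefore degenerates at $E_1$, and --- provided one checks that the Kazhdan filtration on $B$ is exhaustive and bounded below, so the spectral sequence converges --- this yields $H^{>0}(C^\bullet)=0$ together with a canonical graded isomorphism $\gr\hpp\cong\C[\fZ]$ (graded for the $\Gm$-action induced by the Kazhdan action on $\T^*G$). That it is Poisson is automatic: both sides inherit the bracket of $\C[\T^*G]$, via quantum reduction on the left and Poisson reduction on the right, and these agree.

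Finally, to match the commutative subalgebras: the symbols of $\g$-bi-invariant operators are the $\Ad^*G$-invariant functions on $\g^*$ pulled back along $\T^*G=G\times\g^*\to\g^*$, so $\gr(Z\g)\cong(\sym\g)^G$ maps isomorphically onto the subalgebra of $\C[\fZ]$ obtained by restricting those pullbacks to $Y$ and descending; by the very construction of the centralizer fibration this subalgebra is $\kap^*\C[\g^*\ds\!\Ad^* G]$. Since $\kap$ is a Lagrangian fibration (Section \ref{cent-sec}), $\kap^*\C[\g^*\ds\!\Ad^* G]$ is a maximal Poisson-commutative subalgebra of $\C[\fZ]$, which is the asserted maximality of $\gr(Z\g)$ inside $\gr\hpp$. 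I expect the main obstacle to be the control of $\gr$ of the reduction in the third paragraph: it rests entirely on the geometric input that $\fZ$ is smooth of the expected dimension with a free $N_l\times N_r$-action --- this is what makes the Koszul-type complex a resolution on the associated-graded level --- together with the boundedness below of the Kazhdan filtration required for convergence, which is routine but not automatic and easy to get wrong.
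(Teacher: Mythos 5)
Your overall strategy---a filtered-to-graded comparison resting on Kostant's flatness and the freeness of the $N_l\times N_r$-action, organised through Chevalley--Eilenberg/Koszul complexes---is in essence the paper's strategy, but there is a genuine gap at precisely the step you call routine. The Kazhdan filtration on $\dd(G)$, hence on $B=\dd(G)/\dd(G)(\n^\psi_l+\n^\psi_r)$, is \emph{not} bounded below: the grading comes from conjugation by the one-parameter subgroup $\gamma$, and matrix coefficients in $\C[G]$ have arbitrarily negative weights, so $F_nB\neq 0$ for every $n\in\Z$. (The paper points this out immediately after the statement: the filtrations and gradings involved are not bounded below, which is also why Theorem \ref{class-intro} is not a formal consequence of Theorem \ref{main-intro}.) Consequently your convergence check cannot be carried out as stated, and both of your spectral-sequence steps---the identification $\gr B=\C[Y]$, i.e.\ that the symbols of $\n^\psi_l+\n^\psi_r$ generate $\gr$ of the left ideal, and the degeneration argument computing $\gr\hpp$---are exactly where the difficulty lives: the regular-sequence/Shapiro input only controls the $E_1$-page, not convergence of the spectral sequence of an unboundedly filtered complex.

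What the paper does to close this is prove that the quotient Kazhdan filtration on $\dd(G)/\dd(G)(\n^\psi_l+\n^\psi_r)$ is \emph{separating}; that is the actual content of Section \ref{class-intro-pf}. One introduces the Peter--Weyl filtration $R_\la$ of $\C[G]$ and the induced filtration $P_\la\dd(G)=R_\la\cdot\UU(\g_l\oplus\g_r)$, and, using that the Kazhdan filtration on $Q_l\o Q_r$ has no components of negative degree, bounds below the quotient filtration on each $(P_\la\dd(G)+\dd(G)(\n^\psi_l+\n^\psi_r))/\dd(G)(\n^\psi_l+\n^\psi_r)$; separation follows. With separation in hand, convergence of the filtered Chevalley--Eilenberg (homological) spectral sequence is supplied by the criterion of \cite[Lemma 4.3.7]{Gi}, which additionally needs the vanishing $H_j(\n^\psi_l+\n^\psi_r,\dd(G))=0$ for $j>0$ (Proposition \ref{van}, valid because the adjoint action is locally finite) and a $Z^\infty=0$ argument from noetherianity; this is packaged as Lemma \ref{afilt}(1). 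The passage from the graded coinvariants to invariants, i.e.\ comparing $\gr(\hpp)$ with invariants in $\gr B$, is Proposition \ref{stand2}(3), using exactness of $N$-invariants on $N$-equivariant modules over $\C[\psi+\n^\perp]$---your Shapiro argument is the graded shadow of this and is fine. So your outline can be repaired, but only by replacing ``bounded below'' with the separation argument; that replacement is the real content of the proof, not a routine verification.
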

As a consequence of the two theorems above, one obtains an isomorphism
$\gr\HS\cong\C[\fZ]$, which shows that the algebra $\HS$ may be viewed as a quantization of
the symplectic variety $\fZ$. A slightly modified form  of the isomorphism $\gr\HS\cong\C[\fZ]$
that does not however reveal a connection with nil Hecke algebras has been proved
earlier  by Bezrukavnikov, Finkelberg, and Mirkovic
\cite[Proposition 2.8]{BFM}. 

We remark that   Theorem  \ref{class-intro} neither implies, nor is
a simple consequence of Theorem  \ref{main-intro}, due to the fact that
the filtrations and gradings on the algebras
involved  are not bounded below.

\subsection{Strategy of the proof of Theorem \ref{main-intro}}
The theorems above are, in fact, formal consequences of  a combination  of  results (to be recalled in  Section  \ref{grass-sec})
of  Kostant and Kumar, \cite{KK1}, \cite{KK2}, on
the one hand, and of
Bezrukavnikov and Finkelberg, \cite{BF}, on the other hand. That approach 
is, however,  rather indirect;  it relies on 
the geometric Satake and involves the Langlands dual group. Thus, one of our primary motivations was to find a more direct
geometric approach.

The strategy of our approach to the isomorphism  $\HS\cong\hpp$ is as follows. 
First, we construct an algebra homomorphism $\dd(T)^W\to\hpp$. 
Then, we show that this homomorphism
 can be extended to  a homomorphism
$\HS\to\hpp$. 
Finally, in  Section  \ref{m-pf} we prove using
Theorem \ref{ku}  that the latter
homomorphism $\HS\to \hpp$ is an isomorphism.

We should  point out that we do {\em not} know how to construct the  homomorphism $\dd(T)^W\to\hpp$ directly. 
The construction of such a homomorphism is not obvious even quasi-classically,
where it amounts to a construction of Beilinson and Kazhdan \cite{BK}, to be recalled in
 Section  \ref{3d}. Thus, we use an indirect approach that involves the algebra
 $\dd(G/\bN)$, where  $\bN$ is a maximal unipotent subgroup opposite to $N$.
Put  $\bn=\Lie\bN$. Our construction starts with the following Hamiltonian reduction
\[A:=\big(\dd(G/\bN)/\dd(G/\bN)\n^\psi_l\big)^{ N_l}=\big(\dd(G)/\dd(G)(\n^\psi_l+\bn_r)\big)^{ N_l\times \bN_r}.
\] 

There is  a Weyl group action 
on $\dd(G/\bN)$ by algebra automorphisms that has been introduced by  Gelfand and Graev a
 long time ago.
The Gelfand-Graev $W$-action descends to  $A$ so one has an algebra $W\ltimes A$.
The key ingredient of our approach is 
 a $(W\ltimes A,\,\hpp)$-bimodule $\BM$, the {\em Miura bimodule}
defined  Section  \ref{miura-Sec}, an object 
closely related to the one introduced by D. Kazhdan and the author in  \cite{GK}.
We prove (Theorem \ref{HDD})
that the algebra  $A$ is isomorphic to $\dd(T)$; moreover,
  the isomorphism intertwines the  $W$-action on $A$ and
 the natural  $W$-action on $\dd(T)$, see Proposition \ref{w-act}.
Thus, we may view $\BM$ as a $(W\times \dd(T),\,\hpp)$-bimodule.
Further,  using a general criterion of
Proposition  \ref{hh-morita}, we  deduce 
 that the left action of $W\times \dd(T)$ on $\BM$ can be extended to an action
of $\HH$, a larger algebra.
Now, the Miura bimodule comes equipped with a canonical generator $1_\BM\in\BM$. We prove that
for any $a\in \HS\sset\HH$ there exists a uniquely determined element $a_\hpp\in\hpp$,. such that one has
$a1_\BM=1_\BM a_\hpp$. The assignment $a\mto a_\hpp$ yields the desired homomorphism
$\HS\to\hpp$.

\subsection{Relation to equivariant homology of flag varieties}\label{grass-sec}
Let $\check G$ be  the {\em Langlands dual} group of $G$ and $\check T$ the maximal torus of $\check G$.
Let $I\sset \check{G}((z))$ be an Iwahori subgroup.
Let $\bb\aff=\check{G}((z))/I$ be the affine flag variety, resp. ${\mathsf{Gr}}=\check{G}((z))/\check{G}[[z]]$ the affine Grassmannian.

Given a group $K$ and a $K$-action on a space $X$, let $H^\hdot_K(X)$, resp. $H_\idot^K(X)$, denote equivariant cohomology, resp.
homology, of $X$. One has canonical isomorphisms
\[H^\hdot_{I\rtimes \BG_m}(\op{pt})= H^\hdot_{\check T\times\BG_m}(\op{pt})=(\sym\t)[\hb],
\quad\text{resp.}\quad
H^\hdot_{\check{G}[[z]]\rtimes \BG_m}(\op{pt})= (\sym\t)^W[\hb].
\]

In the theorems below, the algebra structure on equivariant homology of $\bb\aff$, resp. $\Gr$, is given by convolution. 
The multiplicative group $\BG_m$ acts on $\bb\aff$, resp. ${\mathsf{Gr}}$, by loop rotation.

Kostant and Kumar, \cite{KK1}, \cite{KK2},\cite{Ku} proved the following  theorem
which is a Kac-Moody generalization of a well-known result  of Bernstein-Gelfand-Gelfand,
\cite{BGG}.
\begin{thm} \label{kk} There are graded algebra isomorphisms
\[

\end{lem}

\begin{proof} 
Let $N=N_l$, resp. $\bN=\bN_r$. First, we prove that 
the  map $v$ is an  isomorphism.  
To prove this, consider an open embedding $j_G:\ NT\bN/\bN\into G/\bN$. 
We have a commutative diagram
of  algebra homomorphisms
\[
\xymatrix{
\dd(G/\bN)\dss\bnp\ \ar[d]^<>(0.5){j^*}\ar[rr]^<>(0.5){v}&&\  \dd(G)\dss(\n_l^\psi+\bn_r)\ \ar[d]^<>(0.5){j^*_G}\\
\dd(NT\bN/\bN)\dss\bnp\ \ar@{=}[r] &\ \dd(T)\ \ar@{=}[r] & \ \dd(NT\bN/\bN)\dss(\n_l^\psi+\bn_r).
}
\]
By commutativity, it suffices to show that the map $j_G^*$ on the right
is  an algebra isomorphism. 
The latter  is proved by mimicking  the arguments of  Section  \ref{quantW}.

Next, we prove that that the map $u$ is an  isomorphism.  
We simplify the notation and write $\dd=\dd(G)$,
resp.  $M=\dd /\dd \n_l^\psi$, and $ D_l=\dd(G)\dss\n_l^\psi=M^{\n_l^\psi}$.
The action of $\n_r^\psi$ on $\dd $ by right multiplication
descends to $M$.
Observe that $M$  is an object of the category
$(\dd ,\n_l^\psi)\mmod$. 
In that category,
one has an exact
sequence 
\beq{star1}M\o \n_r^\psi\xrightarrow{a} M\to M/M\n_r^{\psi}\to0,
\eeq
where the map $a$ is given by
the $\n_r^\psi$-action  on $M$. 
We apply the functor $(-)^{\n_l^\psi}$, which is   an exact functor on 
$(\dd ,\n_l^\psi)\mmod$, to \eqref{star1}. We have
\begin{gather*}
\big(M/M\n_r^{\psi}\big)^{\n_l^\psi} =\big((\dd /\dd \n_l^\psi)/(\dd /\dd \n_l^\psi)\n_r^{\psi}\big)^{\n_l^\psi} =
\big(\dd /\dd (\n_l^\psi+\n_r^{\psi})\big)^{\n_l^\psi},\\
(M\n_r^{\psi} )^{\n_l^\psi}=(\im a)^{\n_l^\psi}=\im[(M\o \n_r^\psi)^{\n_l^\psi}\to
M^{\n_l^\psi}]=(M^{\n_l^\psi})\n_r^{\psi}=
 D_l\n_r^{\psi}.
\end{gather*}
Thus, from the exact sequence \eqref{star1}, we obtain the following exact sequence in $(\dd ,\, \n_l^\psi+\n_r^{\psi})\mmod$:
\beq{star}
0\to  D_l\n_r^{\psi}\to  D_l\to \big(\dd /\dd (\n_l^\psi+\n_r^{\psi})\big)^{\n_l^\psi}\to0.
\eeq

We now apply  the functor 
$(-)^{\n_r^\psi}$, which is  an exact functor on  $(\dd ,\, \n_l^\psi+\n_r^{\psi})\mmod$,
 to \eqref{star}. 
It is immediate to see that the exactness of the resulting sequence implies  that the map $u$ is an isomorphism. 
All other isomorphisms of the lemma are proved in a similar way.
\end{proof}


\begin{proof}[Proof of Theorem \ref{psi-exact}]
Let  $D_r:=\dd(G)\dss\n^\psi_r$. The embedding $\n_l^\psi\into \dd(G)$ induces a map
$\n^\psi\to D_r$.
The functor
$(D_r,\bnp )\mmod\to (D_r\dss\n^\psi)\mmod,\ M\mto M^{\n^\psi}$, is an equivalence, by Proposition \ref{a-lem}.
Further, by Lemma \ref{all}, we have an isomorphism
$D_r\dss\n^\psi\cong\hpp$. The result now follows from Theorem \ref{gpsi}.
\end{proof}

\subsection{The Gelfand-Graev action}\label{GG}
Let  $\rho$ be  the half-sum of positive roots. 
Let $W$ act on ${\sym\t}$ via the dot-action:
$w\cdot a:=w(a)+\langle w(a)-a,\rho\rangle,\ w\in W,\ a\in\t$.
We will identify  $Z \g$ with ${\sym\t}^{W\cdot}$ via  the Harish-Chandra isomorphism.
The differential of the $G\times T$-action on $\tb$  gives an algebra map
$\ug\to\dd(\tb),\ a\mto a_l$, resp. $\ut={\sym\t}\to \dd(\tb),\ a\mto a_r$. 
It is known that the map $a\o a'\mto a_l\o a'_r$ factors through a homomorphism
$\ug\o_{Z\g}\sym\t\to\dd(\tb)$, cf. \cite{BB}.

Restricting the $G\times T$-action  on $\tb$ to the subgroup 
$T\times T\sset G\times T$ 
 gives a weight
decomposition $\C[\tb]=\oplus_{\mu,\la\in\BX^{^*}}\ {}^{(\mu)}\C[\tb]^{(\la)}$,
resp. $\C[\T^*\tb]=\oplus_{\mu,\la\in\BX^{^*}}\ {}^{(\la)}\C[\T^*\tb]^{(\mu)}$.
Further, the  $\BG_m$-action on $\tb$ given by
$\gamma(z): g\bar N\mto \gamma(z)g\gamma(z)\inv\bN$
induces a $\Z$-grading on $\dd(\tb)$. This $\Z$-grading 
may be expressed in terms of the weight decomposition as follows:
\[\dd(\tb)=\oplus_{\ell\in\Z}\ \dd(\tb)(\ell),\quad \dd(\tb)(\ell):=\oplus_{\{\mu,\la\in\BX^*\mid \langle\la,\bbh\rangle+\langle\mu,\bbh\rangle=\ell\}}\ {}^{(\mu)}\dd(\tb)^{(\la)}.
\]
Let $F_\idot\dd(\tb)$ be the Kazhdan filtration  associated with the $\Z$-grading
 and filtration $\dd_{\leq}(\tb)$, as in  Section  \ref{U-sec}.

We have maps $\bnp=\n_l^\psi\to \ug_l\to \dd(\tb)$.
The right $T$-action  on $\dd(\tb)$ survives the  Hamiltonian reduction by $\bnp$.
We get
an $\BX^*$-grading
$\dd(\tb)\dss\bnp=\oplus_{\mu\in\BX^*}\ (\dd(\tb)\dss\bnp)^{(\mu)}$.
The Kazhdan filtration  induces a $\Z$-filtration $F_\idot (\dd(\tb)\dss\bnp) $ which is compatible with the $\BX^*$-grading.
 Thus, we have
$F_n (\dd(\tb)\dss\bnp)  =\oplus_{\mu\in\BX^*}\ F_n(\dd(\tb)\dss\bnp)^{(\mu)}$.
The  (Kazhdan shifted) principal symbol map $\gr^F_\idot\dd(\tb)\to \C[\T^*\tb]\dss\bnp$ 
descends to a map $\gr^F(\dd(\tb)\dss\bnp)  \to \C[\T^*\tb]\dss\bnp$,
of  $\Z\times\BX^*$-graded algebras.

\begin{prop}\label{gr-tb} The map $\gr^F \dd(\tb)\dss\bnp  \to \C[\T^*\tb]\dss\bnp$\
is an  isomorphism.
\end{prop}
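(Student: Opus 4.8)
The plan is to derive Proposition~\ref{gr-tb} from Lemma~\ref{afilt}, applied to the algebra $A=\dd(\tb)$ in which $\ug$ is replaced by $\ug_l$ (acting through the left $G$-action on $\tb=G/\bN$), the filtration $A_{\leq}$ is the order filtration $\dd_{\leq}(\tb)$, and the Hamiltonian reduction is taken with respect to $\bnp=\n_l^\psi$. Part~(1) of that lemma produces a natural graded algebra isomorphism $(\gr^F\dd(\tb))\dss\bnp\iso\gr^F(\dd(\tb)\dss\bnp)$, obtained by passing to $\ad\bnp$-invariants in the canonical map $\gr^F\dd(\tb)/(\gr^F\dd(\tb))\bnp\to\gr^F(\dd(\tb)/\dd(\tb)\bnp)$; this is exactly the descent of the principal symbol map. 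Since $\gr^F\dd(\tb)$ is identified with $\C[\T^*\tb]$ via the (Kazhdan-shifted) principal symbol --- see the next paragraph --- this isomorphism is precisely the assertion of the proposition, so the whole content lies in verifying the hypotheses of Lemma~\ref{afilt}.

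The hypotheses not involving $\bnp$ are routine. The order filtration $\dd_{\leq}(\tb)$ is multiplicative, $\ad\g$-stable, exhaustive, satisfies $\dd_{\leq-1}(\tb)=0$, and the homomorphism $\ug_l\to\dd(\tb)$ carries the PBW filtration into it. Since $\tb$ is a quasi-affine $G$-variety, $\dd(\tb)=\Gamma(\tb,\dd_\tb)$ is the space of global sections of a $G$-equivariant quasi-coherent sheaf on the affine variety $\Spec\C[\tb]$, so the $\ad\g_l$-action on $\dd(\tb)$ is locally finite. The principal symbol map identifies $\gr_{\leq}\dd(\tb)$, hence (after regrading) $\gr^F\dd(\tb)$, with $\C[\T^*\tb]$, which is a finitely generated commutative algebra by \cite[Section 5.5]{GR}; consequently $\dd(\tb)$ is a finitely generated left noetherian algebra of the type considered in Section~\ref{q-red}, and the first hypothesis of Lemma~\ref{afilt} holds.

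The remaining, and main, point is that the Kazhdan filtration on $\dd(\tb)/\dd(\tb)\bnp$ is separating; here I would repeat the argument from the proof of Theorem~\ref{class-intro} in Section~\ref{class-intro-pf}. Since the $G_l$-action on $\tb$ is transitive, $\dd(\tb)$ is generated by $\C[\tb]$ together with $\g_l\subset\dd_{\leq1}(\tb)$, so $\dd(\tb)=\C[\tb]\cdot\ug_l$. Decompose $\C[\tb]=\bigoplus_\mu\C[\tb]_\mu$ into its finite-dimensional isotypic $G_l$-components, indexed by dominant weights $\mu$, and for $\lambda\in\bxp$ put $R_\lambda(\tb):=\bigoplus_{\mu\leq\lambda}\C[\tb]_\mu$ and $P_\lambda\dd(\tb):=R_\lambda(\tb)\cdot\ug_l$. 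The set of dominant weights $\mu\leq\lambda$ is finite, so each $R_\lambda(\tb)$ is finite-dimensional, hence bounded in the grading underlying the Kazhdan filtration; and the $P_\lambda\dd(\tb)$ form an exhaustive filtration of $\dd(\tb)$. The quotient Kazhdan filtration on $Q_l=\ug_l/\ug_l\bnp$ has no negative terms (Section~\ref{kazh}), and the image of $\ug_l$ in $\dd(\tb)/\dd(\tb)\bnp$ factors through $Q_l$; combining this with the boundedness of $R_\lambda(\tb)$ gives, exactly as in Section~\ref{class-intro-pf}, an integer $m(\lambda)$ for each $\lambda$ such that $(P_\lambda\dd(\tb)+\dd(\tb)\bnp)/\dd(\tb)\bnp$ has no Kazhdan components of degree $\leq-m(\lambda)$. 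As every element of $\dd(\tb)/\dd(\tb)\bnp$ lies in the image of some $P_\lambda\dd(\tb)$, this forces $\bigcap_n F_n(\dd(\tb)/\dd(\tb)\bnp)=0$, that is, the Kazhdan filtration is separating.

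With both hypotheses established, Lemma~\ref{afilt}(1) yields the isomorphism $\C[\T^*\tb]\dss\bnp=(\gr^F\dd(\tb))\dss\bnp\iso\gr^F(\dd(\tb)\dss\bnp)$, which is the map of the proposition; its compatibility with the residual $\BX^*$-grading from the right $T$-action is automatic, since that action is present throughout the reduction. The genuinely delicate step is the separatedness of the Kazhdan filtration on $\dd(\tb)/\dd(\tb)\bnp$ --- the Kazhdan filtration on $\dd(\tb)$ itself is unbounded below, and the $P_\lambda$-device is exactly what is needed to control the reduction, reducing the matter to the finite-dimensionality of the weight pieces $R_\lambda(\tb)$ and the lower boundedness of the Kazhdan filtration on $Q_l$.
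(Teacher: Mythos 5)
Your overall route is the same as the paper's: identify $\gr^F\dd(\tb)$ with $\C[\T^*\tb]$ (this is not routine, by the way --- it is \cite[Corollary 3.6.1]{GR}, since $\tb$ is not affine), check that the Kazhdan filtration on $\dd(\tb)/\dd(\tb)\bnp$ is separating, and then quote Lemma \ref{afilt}(1). The problem is in the separatedness step, which you rightly call the delicate one: your argument rests on the claim that, because $G_l$ acts transitively on $\tb$, one has $\dd(\tb)=\C[\tb]\cdot\ug_l$, and hence that the spaces $P_\la\dd(\tb)=R_\la(\tb)\cdot\ug_l$ exhaust $\dd(\tb)$. This claim is false. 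Transitivity only gives that the \emph{sheaf} $\dd_\tb$ is generated over $\oo_\tb$ by the image of $\g_l$; since $\tb$ is quasi-affine but not affine, the algebra of global sections can be, and is, strictly larger than the subalgebra generated by $\C[\tb]$ and $\g_l$. Already for $G=\SL_2$ one has $\tb\cong\C^2\sminus\{0\}$ and $\dd(\tb)=\dd(\C^2)$; grading by the right $T$-action (i.e.\ by total degree, with $\deg x=\deg y=1$, $\deg\partial_x=\deg\partial_y=-1$), the generators $x,y$ have degree $1$ and the operators $x\partial_y,\,y\partial_x,\,x\partial_x-y\partial_y$ coming from $\g_l$ have degree $0$, so the subalgebra they generate sits in nonnegative degrees, while $\partial_x\in\dd(\tb)$ has degree $-1$. (This is exactly the phenomenon that makes the Gelfand--Graev operators nontrivial.) Consequently the $P_\la\dd(\tb)$ do not form an exhaustive filtration, and your verification of the hypothesis of Lemma \ref{afilt} collapses.

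So the gap is genuine: the paper's own proof only says that separatedness follows by ``an argument similar to'' Section \ref{class-intro-pf}, and that argument uses the identity $\dd(G)=\C[G]\cdot\UU(\g_l\oplus\g_r)$, which is special to the affine variety $G$ and has no direct analogue for $\tb$. To repair your write-up you need a substitute for the exhaustion step --- for instance, realize $\dd(\tb)$ as the reduction $\dd(G)\dss\bn_r$ (Lemma \ref{all} and Section \ref{miura-sec}) and control the Kazhdan filtration on $\dd(G)/\dd(G)(\n_l^\psi+\bn_r)$ by a $P_\la$-type argument on $\dd(G)$ itself, or exploit the $\BX^*$-weight decomposition of $\dd(\tb)$ under the right $T$-action together with the structure theory of \cite{GR} --- rather than the false generation statement. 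As written, the proposal does not establish the separating property, which is precisely the nontrivial hypothesis of Lemma \ref{afilt}(1).
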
 
\begin{proof} According to \cite[Corollary 3.6.1]{GR}, the principal symbol map
$\gr\dd(\tb)\to\C[\T^*\tb]$ is a graded algebra isomorphism,
where $\gr(-)$ is taken with respect to the order filtration on differential operators.
This isomorphism respects the $\BX$-gradings. Therefore,  it gives a 
 graded algebra isomorphism $\gr^F\dd(\tb)\cong \C[\T^*\tb]$, where
the grading on $\C[\T^*\tb]$ comes from the  $\bullet$-action.
Further, an argument similar to the one used in Section \ref{class-intro-pf}
shows that the Kazhdan filtration on $\dd(\tb)$ is separating.
The statement of the proposition now follows from the isomorphism of Lemma  \ref{afilt}(1)
in the case where ~${A=\dd(\tb)}$.
\end{proof}

\bigskip
In the 1960's,  Gelfand and Graev constructed, using partial Fourier transforms,  a  Weyl group action on $\dd(\tb)$ by algebra
automorphisms. 
The  Gelfand-Graev action on $\dd(\tb)$ commutes with the $G$-action, in particular with the $N$-action,  by left translations.
Therefore, the Gelfand-Graev action  descends
to a $W$-action on $\dd(\tb)\dss\bnp$  by algebra
automorphisms. 
%


We view the function $t^\rho$ as a zero order differential operator on $T$ and
define a $W$-action on the algebra $\dd(T)$ by the formula $w\cdot u= w(t^\rho\ccirc u\ccirc t^{-\rho})$.
Here $w(-)$ denotes the {\em natural} action of $w\in W$ on $\dd(T)$ induced by
the action of $w$ on $T$. Note that $w\cdot u=w(u)$ for any $u\in\C[T]\sset \dd(T)$.
Also, the embedding $\sym\t=\ut\into \dd(T)$ intertwines the dot-actions on $\sym\t$ and ~$\dd(T)$.

It is clear that the map $j^*: \dd(\tb)\dss\bnp\to \dd(B\bN/\bN)\dss\bnp=\dd(T)$
takes $F_n(\dd(\tb)\dss\bnp)$ to $\dd_{\leq n}(T)$ for all $n$.

\begin{prop}\label{w-act} The isomorphism $j^*: \dd(\tb)\dss\bnp\to \dd(T)$  of Theorem \ref{HDD} induces
an isomorphism $\gr^F j^*: \gr^F(\dd(\tb)\dss\bnp)\iso  \gr\dd(T)=\C[\T^*T]$ of graded algebras. Furthermore,
the map $j^*$
intertwines the $W$-action
on $\dd(\tb)\dss\bnp$ and the dot-action on  $\dd(T)$.
\end{prop}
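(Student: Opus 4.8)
The statement has two halves: (a) the associated-graded map $\gr^F j^*$ is an isomorphism of graded algebras $\gr^F(\dd(\tb)\dss\bnp) \iso \gr\dd(T) = \C[\T^*T]$, and (b) $j^*$ intertwines the Gelfand--Graev $W$-action with the dot-action on $\dd(T)$. I would treat them separately, as the first is essentially a filtered-to-graded bookkeeping argument and the second is where the real content lies.

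For (a), the plan is to combine Proposition \ref{gr-tb}, which gives $\gr^F(\dd(\tb)\dss\bnp) \cong \C[\T^*\tb]\dss\bnp$, with Lemma \ref{YT}, which identifies $\Y = \T^*\tb\dss(N,\psi) = \Spec(\C[\T^*\tb]\dss\bnp)$ with $\T^*T$ via $p_\Y$. One then checks that the composite $\gr^F(\dd(\tb)\dss\bnp) \to \C[\T^*\tb]\dss\bnp = \C[\Y] \xrightarrow{p_\Y} \C[\T^*T] = \gr\dd(T)$ agrees with $\gr^F j^*$. This compatibility is a matter of tracing through the definitions: $j^*$ is restriction of differential operators along $j$, and on principal symbols this corresponds to restriction of functions along the open embedding $\mu_{G,\T^*\tb}\inv(\psi+\n^\perp)\cap \T^*(N T\bN/\bN) \into Y$, composed with the identification of Lemma \ref{GU-prop}, which is exactly $p_\Y$. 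Since $p_\Y$ is an isomorphism (Lemma \ref{YT}) and $\gr^F j^*$ is automatically injective once $j^*$ is an isomorphism of filtered algebras (Theorem \ref{HDD}(ii)) carrying $F_\idot$ into the order filtration, surjectivity plus injectivity gives the claim. (Alternatively, one can argue directly: $j^*$ is a filtered isomorphism by Theorem \ref{HDD}(ii), $\gr^F j^*$ is injective, and it is surjective because $\gr^F \dd(T) = \C[\T^^*T]$ is generated by symbols of operators lying in the image — but I prefer the route through $\Y$ since the two sides were already identified geometrically.)

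For (b), the key point is that everything is determined on a small subalgebra. Observe that both $\dd(\tb)\dss\bnp$ and $\dd(T)$ are generated, as filtered algebras, by $\C[T] = \oplus_\mu (\dd(\tb)\dss\bnp)^{(\mu)}_{\deg 0}$ in degree $0$ together with $\sym\t = \ut$ (the translation-invariant operators), using that $\gr^F$ is a polynomial algebra over $\C[T]$ by part (a). On $\C[T] \sset \dd(T)$ the dot-action coincides with the natural $W$-action (the conjugating factor $t^{\pm\rho}$ commutes with functions), and on $\C[T] \subset \dd(\tb)\dss\bnp$ the Gelfand--Graev action is, by construction, the one descending from the geometric $W$-action on $\C[\taff]$ of Proposition \ref{GR-prop}; under $p_\Y$ (Lemma \ref{YT}, which asserts $W$-equivariance for the diagonal action) this matches the natural $W$-action on $T$. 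So the two actions agree on $\C[T]$. On $\sym\t$, the Gelfand--Graev action is known to restrict, under the embedding $\sym\t = \ut \into \dd(\tb)$ and the Harish-Chandra-type identifications, to the dot-action — this is the content referenced via \cite{BB} just before the proposition, and it is compatible with the stated fact that $\sym\t\into\dd(T)$ intertwines the dot-actions. Since a filtered algebra automorphism is determined by its values on a generating set, and $j^*$ is a filtered isomorphism, the two $W$-actions on $\dd(T)$ — the transport of Gelfand--Graev and the dot-action — agree.

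The main obstacle I anticipate is (b), specifically pinning down why the Gelfand--Graev action restricts to the dot-action on $\sym\t$ and why the partial-Fourier-transform description of Gelfand--Graev is compatible with the restriction map $j^*$: the Fourier transforms are defined on $\dd(\tb)$ using the $N$-orbit structure, and one must verify that after Hamiltonian reduction and restriction to the open cell $NT\bN/\bN \cong N\times T$ they induce precisely the operators $u \mapsto w(t^\rho u t^{-\rho})$ on the $T$-factor. I would handle this either by a direct computation in rank one (reducing the general case to $\SL_2$ via the simple reflections $s_\al$, which is how the Gelfand--Graev action is built) or by invoking the explicit formulas in \cite{GR} (e.g.\ \cite[Lemma 5.2.5]{GR}, already cited in the proof of Lemma \ref{YT}) together with the classical-limit compatibility from part (a): since $\gr^F j^*$ is $W$-equivariant and both actions on $\dd(T)$ lift the same action on $\C[\T^*T]$, it remains only to check they agree, and by the generation argument it suffices to check on $\C[T]$ and $\sym\t$, where it is explicit.
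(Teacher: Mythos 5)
Your treatment of the first claim is essentially the paper's own argument: the paper also composes the symbol isomorphism of Proposition \ref{gr-tb} with $p_\Y$ to get $p_{\mathsf A}=p_\Y\ccirc\op{symb}$, and then checks agreement with $\gr j^*$ on the explicit generating set $\{u_{\mathsf A},\,f^\mu_{\mathsf A}\}$ (whose images under $j^*$ are $u$ and $t^\mu$), concluding that $(\gr j^*)\ccirc p_{\mathsf A}\inv=\op{id}$. Two cautions: the ``compatibility is a matter of tracing through definitions'' step is exactly the generator check you should write out, and your parenthetical alternative is wrong as stated --- a bijective filtered map need not have injective associated graded (strictness is not automatic, and here the Kazhdan filtration is not bounded below), so you cannot get injectivity of $\gr^F j^*$ ``automatically'' from Theorem \ref{HDD}(ii).

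For the $W$-equivariance the reduction to the generators $\sym\t$ and $\C[T]$ is the right skeleton (and the $\sym\t$ part is indeed the citation to \cite{BBP}), but the step on $\C[T]$ has a genuine gap. The assertion that on the $\C[T]$-part of ${\mathsf A}=\dd(\tb)\dss\bnp$ the Gelfand--Graev action is ``by construction'' the one descending from the geometric $W$-action on $\C[\taff]$ is not true by construction: $\F_w$ is defined by partial Fourier transforms on $\dd(\tb)$, and relating it to the classical action requires (i) proving that $\F_w$ preserves the Kazhdan filtration --- the paper does this via the order formula $\ord\F_w(u)=\ord u+\half\langle\la,\bbh\rangle-\half\langle w(\la),\bbh\rangle$ from \cite{BBP} together with the weight-space property $\F_w:\dd(\tb)^{(\la)}\iso\dd(\tb)^{(w(\la))}$ --- and (ii) knowing that the induced action on $\gr^F{\mathsf A}$ matches, via $p_{\mathsf A}$, the classical $W$-action (this is \cite[Proposition 5.4.1]{GR}, not Lemma 5.2.5). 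Even granting all of this, a symbol-level identity only gives $\F_w(f^\mu_{\mathsf A})\equiv f_{\mathsf A}^{w(\mu)}$ modulo lower Kazhdan filtration, which is not enough; your proposal never removes this ambiguity. The paper's key structural input, which is missing from your argument, is that ${\mathsf A}^{(\la)}$ is a rank-one free $\sym\t$-module with generator $f^\la_{\mathsf A}$, so that $\F_w(f^\mu_{\mathsf A})$, being again a generator of ${\mathsf A}^{(w(\mu))}$, equals $c_{w,\mu}\cdot f_{\mathsf A}^{w(\mu)}$ exactly (no lower-order terms), and only then is the symbol computation with $p_{\mathsf A}$ used to pin down $c_{w,\mu}=1$. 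Without the filtration-preservation statement, the freeness argument, and the equivariance of $p_{\mathsf A}$, the claim that the two actions agree on $\C[T]$ is unproved, so as written the second half of your proposal does not go through.
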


To prove the proposition, recall that $v_\la\in V_\la$ denotes a  nonzero $\Nn$-fixed vector 
of the irreducible representation $V_\la$ such that $tv_\la=\la(t)v_\la,\ \forall t\in T$.
Let  $v^*_{-\la}\in V_\la^*$ be a nonzero $ N$-fixed  vector
and
 $f^\la(g):=\langle v^*_{-\la}, gv_\la\rangle$. This function is right $\Nn$-invariant, hence
it descends to a function $f^\la$ on $\tb$, such that
$f^\la(n\cdot t\cdot\Nn/\Nn)=\la(t)$, for all $t\in T,\, n\in  N,\, \bar n\in\Nn$. 

To simplify the notation, put ${\mathsf A}=\dd(\tb)\dss\bnp$.
The function $f^\la$, viewed as an element of $\dd(\tb)$, 
survives the Hamiltonian reduction giving  an element $f^\la_{\mathsf A}\in  {\mathsf A} $.
Let
$\F_w,\ w\in W$ denote the Gelfand-Graev automorphisms of the algebra $\dd(\tb)$, resp. ${\mathsf A}$.

\begin{proof}[Proof of Proposition \ref{w-act}] 
The map    ${\sym\t} \to \dd(\tb),\ u\mto u_r$ survives  the  Hamiltonian reduction by $\bnp\sset\dd(\tb)$, giving 
an algebra map  ${\sym\t}\to {\mathsf A},\ u\mto u_{\mathsf A}$. It is clear that for any $u\in\sym\t$,
resp. $f^\la_{\mathsf A},\ \BX^{++}$, we have $j^*(u_{\mathsf A})=u$, resp. $j^*(f^\la_{\mathsf A})=t^\la$.
The elements $\{u,\, t^\mu \mid u\in\sym\t,\ \mu\in\BX^{++}\}$ generate the algebra $\dd(T)$.
Therefore, the elements $\{u_{\mathsf A},\, f_{\mathsf A}^\mu \mid u\in\sym\t,\ \mu\in\BX^{++}\}$ 
generate the algebra ${\mathsf A}$, by  Theorem \ref{HDD}.

Let $\ord u$ denote the order of a differential operator $u\in\dd(\tb)$.
Clearly, one has  $f^\mu\in {}^{-\mu}\dd_{\leq0}(\tb)^{\mu}$.
Thus, we have
\[
f^\mu_{\mathsf A}\in F_0  {\mathsf A} ^{(\mu)},\en\ \forall \mu\in\bxp,\quad\text{resp.}
\quad 
u_{\mathsf A}\in F_{2n} {\mathsf A} ^{(0)},\en\ \forall u\in\sym^n\t.
\]
Furthermore, $\gr_0(f^\mu_{\mathsf A})\neq0$, resp. $\gr_{2n}(u_{\mathsf A})\neq 0$.
Let $\op{symb}$ be the isomorphism of Proposition \ref{gr-tb},
and recall the isomorphism $p_\Y$ from  Section  \ref{Y-sec}.
One has the following diagram of graded algebra maps:
 \beq{gr-A}
\xymatrix{
\C[\T^*T]\ &\ \C[\T^*\tb]\dss\bnp\ \ar[l]_<>(0.5){p_\Y}^<>(0.5){^{\cong}}& \ \gr {\mathsf A}\ \ar[l]^<>(0.5){^{\cong}}_<>(0.5){\op{symb}}
\ar[rr]^<>(0.5){\gr j^*}&& \ \gr\dd(T)=\C[\T^*T].
}
\eeq
Let $p_{\mathsf A}=p_\Y\ccirc\op{symb}$.  It is immediate from the construction of the maps that,  for any element
$a$ from our generating set $\{u_{\mathsf A},\, f_{\mathsf A}^\mu \mid u\in\sym\t,\ \mu\in\BX^{++}\}$ of ${\mathsf A}$,
one has 
$ p_{\mathsf A} (\gr a)=(\gr j^*)(\gr a)$. 
It follows that the map $(\gr j^*)\ccirc  p_{\mathsf A}\inv: \C[\T^*T]\to\C[\T^*T]$ is the identity.
We deduce that  the map $\gr j^*$ is a graded algebra isomorphism, proving the first statement of the proposition.

For any, not necessarily dominant,  $\la\in\BX^*$, we let $f_{\mathsf A}^{(\la)}$ be the preimage of $t^\la$ under the isomorphism
$j^*$ of Theorem \ref{HDD}. This agrees with the notation $f_{\mathsf A}^\la$ for dominant $\la$.

It is known that the map  ${\sym\t} \to \dd(\tb),\ u\mto u_r$, intertwines  the dot-action on ${\sym\t}$
and the Gelfand-Graev action on $\dd(\tb)$, cf. \cite{BBP}. 
It follows that the map $u\mto u_{\mathsf A}$  intertwines  the dot-action on ${\sym\t}$
and the $W$-action on ${\mathsf A}$. Thus, to prove that the map $j^*$  is $W$-equivariant, it remains to
show that for any $\mu\in \BX^{++}$ and $w\in W$, one has
$\F_w(f^\mu_{\mathsf A}))=f_{\mathsf A}^{w(\mu)}$. First, 
it is immediate from the construction of the action that for any $\la\in\BX^*$ and $w\in W$, the map $\F_w$ yields
an isomorphism $\dd(\tb)^{(\la)}\iso \dd(\tb)^{(w(\la))}$, cf. \cite{BBP}, \cite{GR}. 
Also,  it was shown in the course of the proof of \cite[Lemma 3.18]{BBP}, that one has
\[
\ord \F_w(u)= \ord u+\half \langle \la, \bbh\rangle-\half \langle w(\la), \bbh\rangle,
\qquad \forall\,\la\in \BX^*,\,u\in\dd(\tb)^{(\la)},\ w\in W.
\]
It follows that
the automorphisms $\F_w$, of $\dd(\tb)$, respect the Kazhdan filtration.
Hence, the induced automorphisms of the algebra $ {\mathsf A} $ have similar
propertities. In particular, the map $\F_w: {\mathsf A}^{(\la)}\to {\mathsf A}^{(w(\la))}$ is an isomorphism
and the $W$-action on $ {\mathsf A} $ induces   a $W$-action on  $\gr^F H$
by $\Z$-graded algebra automorphisms.

Next,
we observe that for any $\la\in\BX^*$, the space $\dd(T)^{(\la)}$ is a rank-one free
(say, left) $\sym\t$-module with generator $t^\la$.
Therefore,  the space ${\mathsf A}^{(\la)}$ is a rank one free 
$\sym\t$-module with generator $f^\la_{\mathsf A}$. 
On the other hand, we know that the map $\F_w: {\mathsf A}^{(\mu)}\to {\mathsf A}^{(w(\mu))}$ is an isomorphism and,
for all $u\in\sym\t,\ \mu\in\BX^{++}$, we have $\F_w(f^\mu_{\mathsf A} u_{\mathsf A})=\F_w(f^\mu_{\mathsf A}) (w\cdot u)_{\mathsf A}$.
It follows that the element $\F_w(f^\mu_{\mathsf A})$ is  a  generator of ${\mathsf A}^{(w(\mu))}$ as a 
$\sym\t$-module.  The generator of  a rank-one free
$\sym\t$-module is determined uniquely up to a nonzero constant factor.
We deduce that
$\F_w(f_{\mathsf A}^\mu)=c_{w,\mu}\cdot f_{\mathsf A}^{w(\mu)}$, where $c_{w,\mu}\in\C$ is a nonzero constant.
To complete the proof of  $W$-equivariance of the map $j^*$,
we must show that $c_{w,\mu}=1$ for all $w\in W$ and $\mu\in\BX^{++}$. 
We know that the elements
$\F_w(f_{\mathsf A}^\mu)$ and $f_{\mathsf A}^{w(\mu)}$ both belong to $F_0 {\mathsf A} ^{(w(\mu))}$. Hence,
in $\gr_0 {\mathsf A}$, we have an equation $\gr(\F_w(f_{\mathsf A}^\mu))=c_{w,\mu}\cdot \gr f_{\mathsf A}^{w(\mu)}$.
Further, it follows from \cite[Proposition 5.4.1]{GR} that the isomorphism
$ p_{\mathsf A} =p_\Y\ccirc\op{symb}$, in \eqref{gr-A}, respects the $W$-actions.
Therefore, we compute
 \begin{align*}
 p_{\mathsf A} (\gr(\F_w(f_{\mathsf A}^\mu)))&= p_{\mathsf A} (\F_w(\gr f_{\mathsf A}^\mu))=w\cdot p_{\mathsf A} (\gr f_{\mathsf A}^\mu)\\
&=
w\cdot t^\mu =t^{w(\mu)}=(\gr j^*)(\gr f_{\mathsf A}^{w(\mu)})=p_{\mathsf A}(\gr f_{\mathsf A}^{w(\mu)}).
\end{align*}
It follows that $\gr(\F_w(f_{\mathsf A}^\mu))=\gr f_{\mathsf A}^{w(\mu)}$, hence $c_{w,\mu}=1$, and we are done.
\end{proof}

\section{The Miura bimodule}\label{miura-Sec} 
\subsection{Proof of Theorem \ref{nnbar}}\label{nnbar-sec}
In this subsection, we consider the actions of $N$ and $\bN$ on $G$ by right translations.
Let  $  D_r =\dd(G)\dss\n_r ^\psi$.
The natural projection $p: G\to G/\bN_r $ is an affine morphism.
Therefore,   for any open affine  $U\sset G/\BN_r $
the set $p\inv(U)$ is affine. 
Let $A_U:=\Ga(p\inv(U), \dd_G)$. Given  a left $D_r$-module $F$, put
$F_U=A_U\dss\n_r^\psi\o_{  {D_r} }F$. 
We have
$A_U\o_{\dd(G)}(\dd(G)/\dd(G)\n_r^\psi\,\o_{  {D_r} }\,F)\ =\
A_U/A_U\n_r^\psi\,\o_{A_U\dss\n_r^\psi}\,F_U$, since $A_U$ is flat over $\dd(G)$.
Applying Proposition \ref{a-lem}(ii) in the case $A=A_U$ we deduce
that $H_i(\bn_r ,\ A_U/A_U\n_r ^\psi\,\o_{A_U\dss\n_r^\psi}\,F_U)=0,\ \forall i\neq 0$, and
$H_0(\bn_r,\ A_U/A_U\n_r^\psi\,\o_{A_U\dss\n_r^\psi}\,F_U)=\ut\o_{Z\g}F_U$.

We have  a chain of equivalences of categories of left modules
\beq{dnn}  {D_r} \mmod=\dd(G)\dss \n_r ^\psi\mmod\ccong(\dd(G), N_r ,\psi)\ccong
(\dd_G,N_r ,\psi)\mmod.
\eeq
Let $\cf\in (\dd_G,N_r ,\psi)\mmod$ be the object that corresponds to  an object $F\in   D_r \mmod$ via the
above equivalences.
Thus, $\Ga(p\inv(U), \cf)=A_U/A_U\n_r ^\psi\,\o_{A_U\dss\n_r ^\psi}\,F_U$.
For any  $j\in \Z$, by the definition of the functor 
\[\int^{\text{derived}}_p:\ \dd_G\mmod\ =\ 
\dd^{op}_G\mmod\too\dd^{op}_\tb\mmod\ =\ \dd_\tb\mmod,\]  
one has $\Ga(U, \int_p^j\cf)=H_{-j}\big(\bn_r,\, \Ga(p\inv(U), \cf)\big)$. 
We deduce that  $\Ga(U, \int_p^j\cf)=0,\ \forall j\neq 0$, and
  there is an isomorphism $\Ga(U, \int^0_p\cf)\cong \ut\o_{Z\g}F_U$.
In particular, the functor$\int^0_p:   D_r \mmod\to \dd_{G/\bN_r}\mmod$ is exact.

To complete the proof, we recall that  the averaging functor 
$\cf\mto \int^{\text{derived}}_{a_{\bar N_r}} (\Om_{\bar N_r}\times \cf)$
on the derived category of $\dd_G$-modules is isomorphic to the functor $p^*\ccirc\int^{\text{derived}}_p$,
cf.  \cite[ Section  2.5]{MV}. 
The functor $p^*:\ \dd_{G/\BN_r}\mmod\to (\dd_G, \bar N_r)\mmod$  is well-known to be an equivalence,
and the theorem follows.\qed

\bigskip
Let
$\mmg=\bn_r\dd_G\bac\dd_G/\dd_G\n_r^\psi$. This sheaf of coinvariants is a sheaf of vector spaces on $G$, to be called the
{\em Miura sheaf}. Using an isomorphism $\dd_G\cong \oo_G\o\ug$ and the PBW theorem for $\ug$, it is easy to show that
 for any affine open
$V\sset G$, one has $\Ga(V,\mmg)=\bn_r\dd(V)\bac\dd(V)/\dd(V)\n_r^\psi$.
According to   Section  \ref{q-red}, the  sheaf $p_\idot\mmg$
has the natural structure of a 
$\big(\bn_r\drs(p_\idot\dd_G),\,\dd(G)\dss\n_r^\psi\big)$-bimodule. 
Recall that $\dd(G)\dss\n_r^\psi=D_r$. Also,
by  Section  \ref{q-red}, one has algebra isomorphisms $\bn_r\drs(p_\idot\dd_G)\cong (p_\idot\dd^{op}_G)\dss\bn_r^{op}\cong (p_\idot\dd_G)\dss\bn_r\cong \dd_\tb$.
Thus, we can view $p_\idot\mmg$ as a $(\dd_\tb,   D_r)$-bimodule. 
By Proposition \ref{a-lem}(ii), there is a canonical isomorphism
\beq{mm-co}p_\idot\mmg\ccong \ut\,\o_{Z\g}\, (p_\idot\dd_G)\dss\n_r^\psi,
\eeq
of sheaves of $\big(\ut,  D_r \big)$-bimodules.
Furthermore, the proof of Theorem \ref{nnbar}
shows that  on the category $D_r\mmod$, there is an isomorphism of  functors
\beq{int-m}
 \int_p (\dd_G/\dd_G\n_r^\psi\o_{D_r}-)\ccong p_\idot\mmg\o_{D_r}(-).
\eeq

\subsection{From $\hpp$-modules to $\dd(T)$-modules}\label{miura-sec} 
We are going to  mimic formulas \eqref{mm-co}-\eqref{int-m}  in the setting where the ring
$\dd(G)$ is replaced by 
the ring $ D_l:=\dd(G)\dss\n^\psi_l$.

View $ D_l$ as an $(\UU\bn_r, \UU\n_r)$-bimodule,
where the left, resp. right, action is provided by left, resp. right,  multiplication
inside the algebra $ D_l$ by the elements of $\bn_r$, resp. $\n_r$.
The analogue of the Miura sheaf $\mmg$ is played by $\BM:=\bn_r D_l\backslash D_l/  D_l\n^\psi_r$.
By  Section  \ref{q-red},  this space of coinvariants
has 
 the structure of
a $(\bnr\drs D_l,\,  D_l\dss\n^\psi_r)$-bimodule. 
By Lemma \ref{all}, we have $D_l\dss\n^\psi_r\cong\hpp$, and there is a chain of
algebra isomorphisms
\[\bnr\drs D_l=\bnr\drs\big(\dd(G)\dss\n^\psi_l\big)=\big(\bnr\drs\dd(G)\big)\dss\n^\psi_l=\big(\dd(G)\dss\bnr\big)\dss\n^\psi_l
=\dd(\tb)\dss\n^\psi_l=\dd(T),
\]
where the last isomorphism comes from Theorem \ref{HDD}(i). We conclude that $\BM$ has the structure  of
a $(\dd(T),\, \hpp)$-bimodule.
Further, according to Proposition \ref{a-lem}(ii), one has  natural
isomorphisms   \[\BM=\bn_r D_l\backslash D_l/  D_l\n^\psi_r\cong
\sym\t\,\o_{_{Z\g}}\,( D_l\dss\n^\psi_r)\cong \sym\t\o_{_{Z\g}} \hpp.\]

We let the group $W$ act on $\sym\t\,\o_{_{Z\g}}\,\hpp$ by
$w(a\o h)=(w\cdot a)\o h$, where $a\mto w\cdot a$ is the dot-action.
 It follows from the construction that the left action
$\dd(T)\o \BM\to \BM$ is a $W$-equivariant map.
Hence, this action  can be extended to a left action of
$W\ltimes\dd(T)$ on $\BM$.
Thus, $\BM$ acquires the structure of a   $(W\ltimes\dd(T),\,\hpp)$-bimodule,
to be called the {\em Miura bimodule}.

For any $\hpp$-module $F$, the  $W$-action on $\BM$ gives the $\dd(T)$-module  $\BM\o_{\hpp} F$ a   $W$-equivariant structure.
Thus we obtain a functor $\hpp\mmod\to W\ltimes\dd(T)\mmod,\ F\mto\BM\o_{\hpp} F$.
This functor is exact since  $\BM\o_{\hpp} (-)=(\sym\t\o_{_{Z\g}}\hpp)\o_\hpp(-)=\sym\t\o_{_{Z\g}}(-)$,
 and $\sym\t$ is free over $Z\g$.

\begin{prop}\label{M-exact} The  functor $\hpp\mmod\to W\ltimes\dd(T)\mmod,\ F\mto \BM\o_{\hpp} F$,
takes holonomic $\hpp$-modules to holonomic $\dd(T)$-modules.
\end{prop}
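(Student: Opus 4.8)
The plan is to reduce the statement to a dimension count on singular supports, using the explicit description $\BM \cong \sym\t \o_{Z\g} \hpp$ and the fact that $\sym\t$ is a free module of finite rank over $Z\g$ (via the Harish-Chandra isomorphism $Z\g \cong (\sym\t)^{W\cdot}$, so $\sym\t$ is free of rank $\#W$ over $Z\g$). First I would fix a finitely generated holonomic $F \in \hpp\mmod$ and choose a good filtration on it with respect to the Kazhdan filtration on $\hpp$; by definition $\dim\op{SS}(F) \leq \frac12\dim\fZ = \frac12\dim\T^*T = \dim T$, using Corollary \ref{noeth} (separatedness) and Theorem \ref{class-intro} (i.e. $\gr\hpp \cong \C[\fZ]$).

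Next I would put the induced good filtration on $\BM\o_\hpp F = \sym\t\o_{Z\g}F$ as a $\dd(T)$-module. The key point is that $\gr$ of this module, as a $\gr\dd(T) = \C[\T^*T]$-module, is supported on a set whose dimension is controlled. Concretely, the Miura bimodule carries a filtration (coming from the Kazhdan filtration on $\dd(G)$ passed through the Hamiltonian reductions) whose associated graded is the classical Miura object; I would use that $\gr\BM$ realizes, via $p_\Y$ and the symbol maps of Proposition \ref{gr-tb} and Proposition \ref{w-act}, the structure sheaf of the Lagrangian correspondence $\La \subset \fZ \times \T^*T$ of Section \ref{3d} (or at least a subscheme supported on it). Hence $\op{SS}(\sym\t\o_{Z\g}F)$ is contained in the image under $pr_{\T^*T}$ of $\La \times_\fZ \op{SS}(F)$. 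Because $\La \to \fZ$ is finite over the regular locus and, more to the point, $\C[\fZ\times_\fc\t^*]$ is finite over $\C[\fZ]$ (this is the content of Theorem \ref{bfm}: $\C[T\times\t^*,\frac{t^\al-1}{\calph}]$ is module-finite over $\C[T\times\t^*]$, hence over $\C[\fZ]$ after the $W$-quotient, matching $\sym\t$ finite over $Z\g$), projecting to $\T^*T$ does not raise dimension beyond $\dim\op{SS}(F) \leq \dim T$. Therefore $\dim\op{SS}(\sym\t\o_{Z\g}F) \leq \dim T = \frac12\dim\T^*T$, which is exactly the holonomicity bound for $\dd(T)$.

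More carefully, the cleanest route avoids the geometry of $\La$ and argues purely algebraically. Since $\sym\t$ is finite free over $Z\g$, the $\dd(T)$-module $\sym\t\o_{Z\g}F$ is finitely generated, and a good filtration can be chosen so that $\gr(\sym\t\o_{Z\g}F)$ is a finite module over $\gr\dd(T) = \C[\T^*T]$ whose scheme-theoretic support maps finitely to $\Spec\gr(\hpp\text{-part acting}) \supseteq \op{SS}(F)$ under the algebra map $\gr\hpp \to \gr\dd(T)$ induced by the bimodule structure — more precisely, $\gr\dd(T)$ is finite over the image of $\gr Z\g = \kap^*\C[\g^*\ds\Ad^*G]$, which is the base of the integrable system, and $\op{SS}(F)$ is finite over the same base because $F$ is holonomic over $\hpp$ (the fibers of $\fZ \to \fc$ are $\rk\g$-dimensional, matching $\dim\op{SS}(F) - \dim\fc$ being at most $\dim T$). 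Chasing dimensions: $\dim\op{SS}(\sym\t\o_{Z\g}F) = \dim\op{SS}(F) \leq \dim T$, and since $\sym\t\o_{Z\g}F$ decomposes $W$-equivariantly, its restriction to each of the $\#W$ "sheets" is holonomic, whence the $W$-equivariant module is holonomic in the sense of Section \ref{cat}.

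The main obstacle I anticipate is making the finiteness/dimension bookkeeping rigorous when the gradings and filtrations are not bounded below — exactly the subtlety flagged after Theorem \ref{class-intro}. One must ensure the Kazhdan filtration on $\BM\o_\hpp F$ is separating and that the associated-graded of the bimodule tensor product really is computed by the classical Miura object (i.e. $\gr(\BM\o_\hpp F) \twoheadrightarrow$ or $=$ $\gr\BM \o_{\gr\hpp} \gr F$), which requires a Tor-vanishing statement of the type already used repeatedly (Proposition \ref{stand2}(4), Lemma \ref{afilt}); since $\sym\t$ is flat (even free) over $Z\g$, this flatness should propagate and the spectral-sequence/convergence arguments of Lemma \ref{afilt} (via \cite[Lemma 4.3.7]{Gi}) should apply verbatim. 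So the proof is: (1) establish a compatible good filtration and the surjection $\gr\BM\o_{\gr\hpp}\gr F \twoheadrightarrow \gr(\sym\t\o_{Z\g}F)$; (2) identify $\gr\BM$ with (a quotient of) $\C[\La]$ or directly with $\sym\t\o_{\gr Z\g}\C[\fZ]$; (3) invoke the module-finiteness from Theorem \ref{bfm} to conclude the support dimension is preserved; (4) read off holonomicity.
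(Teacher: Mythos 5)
Your route is genuinely different from the paper's. The paper proves Proposition \ref{M-exact} without any filtration bookkeeping at all: using the equivalences $\mathbf I:(\dd_G,N_l\times N_r,\psi\times\psi)\mmod\cong\hpp\mmod$ and $\mathbf J:(\dd_\tb,N_l,\psi)\mmod\cong\dd(T)\mmod$ (Theorem \ref{HDD}) together with formula \eqref{int-m}, it identifies ${\mathbf J}\ccirc(\BM\o_{\hpp}(-))$ with the $\dd$-module direct image $\int^0_p$ along $p:G\to G/\bar N_r$ applied to Whittaker modules, and then simply quotes the fact that direct images preserve holonomicity of $\dd$-modules. Your proposal instead runs a microlocal dimension count through the Lagrangian correspondence $\La=\op{SS}(\BM)$ (Lemma \ref{SSM}, Proposition \ref{lagr}); this is exactly the argument the paper itself uses, but only in the \emph{opposite} direction, in the proof of Theorem \ref{cat-eq} (from holonomic $\dd(T)^W$-modules to holonomic $\hpp$-modules), where the good filtration lives on the $\dd(T)^W$-side and is bounded below. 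The core geometric input you invoke is correct: $pr_\fZ:\La\to\fZ$ is finite (fibers have at most $\#W$ points), so $pr_{\T^*T}\bigl(pr_\fZ\inv(\op{SS}(F))\cap\La\bigr)$ has dimension at most $\dim\op{SS}(F)\leq\dim T$. What the paper's route buys is that finite generation, goodness of filtrations, and the dimension bound all come for free from standard $\dd$-module theory; what your route would buy, if completed, is a self-contained filtered argument symmetric with the one in Theorem \ref{cat-eq}.

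However, as written there are concrete gaps and two false supporting claims. First, Theorem \ref{bfm} does \emph{not} say that $\C[T\times\t^*,\frac{t^\al-1}{\calph}]$ is module-finite over $\C[T\times\t^*]$ (affine blow-up algebras are not finite extensions); the finiteness you actually need is of $\fZ\times_\fc\t^*\to\fZ$, equivalently of $\La\to\fZ$, which holds because $\t^*\to\fc$ is finite. Second, in your ``cleanest route'' paragraph the claim that $\gr\dd(T)=\C[\T^*T]$ is finite over the image of $\gr Z\g$ is false ($\dim\T^*T=2\dim T>\dim\fc$), so the purely algebraic shortcut avoiding $\La$ does not work: the $\C[T]$-direction of the support is only controlled through the geometry of $\La$. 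Third, and most substantively, you must actually produce a good filtration on $\BM\o_\hpp F$ as a $\dd(T)$-module: one needs (a) finite generation of $\sym\t\o_{Z\g}F$ over $\dd(T)$, and (b) that the tensor-product filtration has $\gr$ finitely generated over $\C[\T^*T]$ with support contained in $\supp(\gr\BM\o_{\gr\hpp}\gr F)$; the surjection $\gr\BM\o_{\gr\hpp}\gr F\onto\gr(\BM\o_\hpp F)$ is formal, but goodness is not, and the Kazhdan filtrations here are unbounded below, so the convergence arguments of Lemma \ref{afilt} do not apply ``verbatim'' -- they are written for the algebra $A$ itself, not for tensor products with an arbitrary holonomic module. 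This is precisely the delicacy the paper sidesteps by passing through the exact functor $\int^0_p$, and until it is filled your argument establishes the dimension bound only conditionally on the existence of such a compatible good filtration.
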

\begin{proof} 
Using an analogue of the composition of the chain of equivalences in \eqref{dnn}, we obtain
an equivalence $(\dd_G, N_l\times N_r,\psi\times\psi)\mmod\iso\hpp\mmod$,
to be denoted ${\mathbf I}$.
Further,  Theorem \ref{HDD}(ii) yields an equivalence $(\dd_\tb, N_l,\psi)\mmod\iso\dd(T)\mmod$,
which we denote by $\mathbf J$.
Recall the notation $D_l=\dd(G)\dss\n_l^\psi$, resp. $D_r=\dd(G)\dss\n_r^\psi$, and  the projection $p: G\to \tb=G/\bar N_r$.

We consider the following diagram of functors
where $\dd$ stands for $\dd(G)$ and horizontal inclusions in the middle of the diagram are the natural full embeddings:

\[
\xymatrix{
\hpp\mmod\ \ar@{=}[r]^<>(0.5){{\mathbf I}}\ar[d]^<>(0.5){\BM\o_{\hpp}(-)}&\ 
(\dd,N_l\times  N_r,\psi\times\psi )\mmod\ 
\ar[d]^<>(0.5){\int^0_p}\ar@{^{(}->}[r]&
\ (\dd, N_r,\psi)\mmod\ \ar[d]_<>(0.5){\int^0_p}\ar@{=}[r]&
\ D_r\mmod\ar[d]_<>(0.5){p_\idot\mmg\o_{D_r}(-)}
\\
\dd(T)\mmod\ \ar@{=}[r]^<>(0.5){\mathbf J}&\
(\dd,N_l\times \bN_r,\psi\times0)\mmod\ \ar@{^{(}->}[r]&\
(\dd,\bN_r)\mmod\ \ar@{=}[r]&\ \dd_{G/\bN_r}\mmod
}
\]

Using that  $\mathbf J$ is an  equivalence   and formula \eqref{int-m}, it  is easy to verify
 that two composite functors $\hpp\mmod\to \dd_{G/\bN_r}\mmod$, along the perimeter of the diagram, are isomorphic.
It follows that the  functor $\int^0_p\ccirc {\mathbf I}$ is isomorphic
to the functor ${\mathbf J}\ccirc \BM\o_{\hpp}(-)$.
This implies the statement of the proposition 
since the functor
$\int^0_p$ sends  holonomic $\dd$-modules to  holonomic $\dd$-modules.
\end{proof}

\begin{rem}
For $\cf\in (\dd_G,N_l\times N_r,\psi\times\psi )\mmod$,
the  $W$-equivariant structure on the $\dd(T)$-module
$J(\int^0_p\cf)$ is not  immediately
visible  from the definition of the functor ${\mathbf J}\inv\ccirc\int^0_p$.
 \erem

The proof of the following result is left for the reader.
\begin{lem}\label{SSM} View $\BM$ as a $\dd(T)\otimes \hpp^{op}$-module.
Then, we have $\op{SS}(\BM)=\Lambda$, the Lagrangian subvariety from Proposition \ref{lagr}. In particular,
$\BM$ is a holonomic  $\dd(T)\otimes \hpp^{op}$-module.
\end{lem}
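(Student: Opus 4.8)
The plan is to compute the associated graded algebra $\gr\BM$ for the Kazhdan filtration and to match it, together with its bimodule structure, with the coordinate ring of the Lagrangian $\Lambda$ from Section~\ref{3d}. First I would equip $D_l=\dd(G)\dss\n^\psi_l$ with the Kazhdan filtration and give $\BM=\bn_r D_l\backslash D_l/D_l\n^\psi_r$ the quotient filtration. Arguing as in Section~\ref{class-intro-pf} (with the filtration $P_\la\dd(G)$ and Lemma~\ref{afilt}) this is a separating filtration, and it turns both the left action of $\dd(T)$ --- via the chain of \emph{filtered} isomorphisms $\bn_r\drs D_l\cong\dd(\tb)\dss\n^\psi_l\cong\dd(T)$ coming from Theorem~\ref{HDD} and Proposition~\ref{w-act} --- and the right action of $\hpp$ --- via $D_l\dss\n^\psi_r\cong\hpp$ of Lemma~\ref{all} --- into filtered actions. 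Hence $\gr\BM$ is a finitely generated graded module over $\gr\bigl(\dd(T)\o\hpp^{op}\bigr)=\C[\T^*T]\o\C[\fZ]=\C[\T^*T\times\fZ]$, where I use $\gr\dd(T)=\C[\T^*T]$ and, by Theorem~\ref{class-intro}, $\gr\hpp=\C[\fZ]$. In particular $\op{SS}(\BM)$ is well defined.

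The core step is the computation of $\gr\BM$. Carrying out the reasoning behind the isomorphism $\BM\cong\sym\t\o_{Z\g}\hpp$ of Section~\ref{miura-sec} (Proposition~\ref{a-lem}(ii) together with Lemma~\ref{afilt}) at the level of associated graded objects gives a graded algebra isomorphism $\gr\BM\cong\gr(\sym\t)\o_{\gr Z\g}\gr\hpp$. By Chevalley's theorem $\gr\sym\t=\C[\t^*]$ is free over $\gr Z\g=\C[\t^*]^W=\C[\fc]$, and Theorem~\ref{class-intro} identifies $\gr\hpp=\C[\fZ]$ with $\gr Z\g=\C[\fc]$ realized inside it as the functions pulled back along the universal-centralizer structure map $\fZ\to\fc$; therefore
\[
\gr\BM\ \cong\ \C[\t^*]\o_{\C[\fc]}\C[\fZ]\ =\ \C[\fZ\times_\fc\t^*].
\]
It remains to read off the $\C[\T^*T\times\fZ]$-module structure on this, i.e.\ the algebra map $\phi\colon\C[T\times\t^*\times\fZ]\to\C[\fZ\times_\fc\t^*]$ giving the action. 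On $\C[\t^*]=\gr\sym\t\sset\gr\dd(T)$ and on $\C[\fZ]=\gr\hpp^{op}$ it is the evident pullback along the two projections of $\fZ\times_\fc\t^*$, and the only remaining datum is the value of $\phi$ on the zero-order operators $t^\la\in\C[T]\sset\gr\dd(T)$. Tracking the matrix-coefficient functions $f^\la$ generating the left $\dd(T)$-action through the identifications of Section~\ref{miura-sec} --- equivalently, using that $\Lambda$ is by construction the classical shadow of $\BM$, cf.\ Section~\ref{3d} --- one gets $\phi(t^\la)=\vkap^*(t^\la)$, with $\vkap$ the Beilinson--Kazhdan map of \eqref{BK}. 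Thus $\Spec\phi$ sends $(z,x)\in\fZ\times_\fc\t^*$ to $(t,x,z)$, where $\vkap(z,x)=(t,x)$.

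To finish, recall that by its definition in Section~\ref{3d} the variety $\Lambda$ is exactly the image of this morphism $\Spec\phi$, and $\Lambda\to\fZ\times_\fc\t^*,\ (z,x,t)\mapsto(z,x)$, is an isomorphism. Since $\t^*\to\fc$ is finite, so is $\fZ\times_\fc\t^*\to\fZ$, hence $\Lambda\to\fZ$ is finite and $\Lambda$ is closed in $\fZ\times\t^*\times T=\fZ\times\T^*T$; consequently $\phi$ is the quotient map $\C[\T^*T\times\fZ]\onto\C[\Lambda]$, so that $\ker\phi=I_\Lambda$ and
\[
\op{SS}(\BM)\ =\ \op{supp}(\gr\BM)\ =\ V(\ker\phi)\ =\ \Lambda.
\]
Finally $\Lambda$ is Lagrangian in the symplectic variety $\fZ\times\T^*T$ by Proposition~\ref{lagr}(i), so $\dim\Lambda=\tfrac12\dim(\fZ\times\T^*T)=\tfrac12\dim\Spec\gr(\dd(T)\o\hpp^{op})$, i.e.\ $\BM$ is holonomic. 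The main obstacle is the middle step: one must run the two-step Hamiltonian-reduction identification at the graded level with no collapse of the filtration --- the separatedness and $\Tor$-vanishing inputs of Lemma~\ref{afilt} and Proposition~\ref{stand2} are tailored to this --- and, more essentially, one must check that the zero-order part of the left $\dd(T)$-action degenerates to the Beilinson--Kazhdan pullback $\vkap^*$; this is where the actual geometry of the universal centralizer enters, the rest being formal.
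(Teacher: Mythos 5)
The paper offers no proof of this lemma (it is explicitly ``left for the reader''), so there is nothing to compare your argument against; I can only assess it on its own terms. Its skeleton is the natural one and the formal part is essentially sound: the Kazhdan filtration is compatible with both actions, $\gr\BM\cong\C[\t^*]\o_{\C[\fc]}\C[\fZ]=\C[\fZ\times_\fc\t^*]$ follows from Proposition \ref{a-lem}(ii), Lemma \ref{afilt} and Theorem \ref{class-intro} (the separatedness hypotheses being available as in Section \ref{class-intro-pf} and Corollary \ref{noeth}), and, as you in effect observe, this graded module is cyclic over $\C[\T^*T]\o\C[\fZ]$, with $\C[\t^*]$ and $\C[\fZ]$ acting through the ring structure; hence its support is automatically the graph of a morphism $\fZ\times_\fc\t^*\to T$ determined by an algebra map $\phi:\C[T]\to\C[\fZ\times_\fc\t^*]$. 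Note that the ``in particular'' clause already follows from this formal part alone, since any such graph has dimension $\dim\fZ=\tfrac12\dim(\fZ\times\T^*T)$, so holonomicity does not depend on identifying $\phi$.

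The genuine gap is precisely the identification $\phi(t^\la)=\vkap^*(t^\la)$, which is the entire content of the equality $\op{SS}(\BM)=\Lambda$ and which you assert rather than prove. Your alternative justification, that ``$\Lambda$ is by construction the classical shadow of $\BM$,'' is circular: the remark after Proposition \ref{lagr} that $\Lambda$ is a classical counterpart of the Miura bimodule is informal foreshadowing of exactly this lemma, not an established fact one may quote. A complete argument has to carry out the tracking of $f^\la$ that you only gesture at: by Propositions \ref{gr-tb} and \ref{w-act}, the operator implementing the left action of $t^\la$ is $f^\la_{\mathsf A}\in\dd(\tb)\dss\bnp\cong\dd(T)$, whose symbol is the reduction of the matrix coefficient $f^\la(g)=\langle v^*_{-\la},gv_\la\rangle$; one must then evaluate this function on the two-sided classical reduction, described via the Kostant section by Corollary \ref{2d} and Lemma \ref{GU-prop}, and check that on points lying over $(x,g)\in\cg\reg$ with $g\in G_x\sset B$ it computes $\la\bigl(g\bmod[B,B]\bigr)$, i.e.\ agrees with the Beilinson--Kazhdan map \eqref{BK}. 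This Kostant-type computation is where the geometry of the universal centralizer actually enters; until it is done, your argument shows only that $\op{SS}(\BM)$ is the graph of \emph{some} morphism $\fZ\times_\fc\t^*\to T$, not that this morphism is the one whose graph is $\Lambda$.
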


\section{Nil-Hecke algebras}\label{Nil-sec}
\subsection{Degenerate nil-Hecke algebras} \label{nil-sec}
Let $\Ups\sset\cR\sset\h^*$ be a reduced root system, where $\Ups$ denotes the set of simple roots. Let
$\W$ be the corresponding Coxeter group,  $\{s_\al,\ \al\in\Ups\}$ the set of generators of $\W$,
and $\ell: \W\to \Z_{\geq 0}$ the length function.
For $\al,\be\in\Ups$, let $m_{\al,\be}$ denote the order of the element
$s_\al s_\be\in\W$. The 
{\em nil-Hecke algebra} $\nil(\W)$ is defined as a $\C$-algebra with generators
$\bt_\al,\ \al\in\Ups$, subject to the relations
\beq{braid}
(\bt_\al)^2=0,\quad (\bt_\al\bt_\be)^{m_{\al,\be}}=(\bt_\be\bt_\al)^{m_{\al,\be}},\quad \forall\al,\be\in\Ups.
\eeq

For each $w\in \W$, there is an element $\bt_w\in\W$  defined as a product $\bt_{\al_{i_1}}\cdots \bt_{\al_{i_k}}$ for a reduced factorization
$w=s_{\al_{i_1}}\cdots s_{\al_{i_k}}$ into simple reflections. It is known that $\bt_w$ is independent of such a factorization
and one has
$\bt_w \bt_y=\bt_{wy}$ if $\ell(w)+\ell(y)=\ell(wy)$ and $\bt_w \bt_y=0$, otherwise, see \cite{Ku}.
Moreover, the set $\{\bt_w,\ w\in \W\}$ is a $\C$-basis of $\nil(\W)$.

Let $\C(\h^*)$ be the field of fractions of the algebra $\sym\h=\C[\h^*]$.
For every  $\al\in R$, define an element $\th_\al=\frac{1}{{\check\al}}(s_\al-1)\in W\ltimes \C(\h^*)$.
There is a natural algebra map $\nil(\W)\into\C(\h^*)\rtimes \W$
given on the generators by $\bt_\al\mto \th_\al,\ \al\in\Ups$.
Let $\nil(\h,\W)$ be a free left $\sym\h$-submodule of $\C(\h^*)\rtimes \W$ with basis
$\th_w,\ w\in\W$. It is immediate to check that     $\nil(\h,\W)$ is a subalgebra of $\C(\h^*)\rtimes \W$
and that
 $\nil(\h,\W)$ is free as a  $\sym\h$-module via right multiplication, \cite{Ku}.

\begin{prop}[\cite{Ku}, Theorem 11.1.2] The algebra  $\nil(\h,\W)$ is generated by the algebras $\nil(\W)$ and $\sym\h$ subject to
the following commutation relations:
\beq{def-rel}
\th_\al\cdot s_\al( h)- h\cdot\th_\al=\langle\al, h\rangle,\qquad \forall\  h\in\h,\ \al\in\Ups.
\eeq
\end{prop}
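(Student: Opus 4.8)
The plan is to prove that $\nil(\h,\W)$ is generated as an algebra by the subalgebras $\nil(\W)$ and $\sym\h$, subject only to the commutation relations \eqref{def-rel}. This is a standard ``presentation via cross-relations'' argument, and the strategy divides into two parts: (1) verify that these relations hold inside $\C(\h^*)\rtimes\W$, and (2) verify that they suffice, i.e.\ that the abstract algebra they present has the right size.

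\textbf{Step 1: the relations hold.} First I would check \eqref{def-rel} by a direct computation in $\C(\h^*)\rtimes\W$. Writing $\th_\al=\tfrac1{\calph}(s_\al-1)$, one computes for $h\in\h$
\[
\th_\al\cdot s_\al(h)=\tfrac1{\calph}(s_\al-1)s_\al(h)=\tfrac1{\calph}\big(s_\al(h)s_\al - s_\al(h)\big)
=\tfrac1{\calph}\big(h\,s_\al - s_\al(h)\big),
\]
using that $s_\al(h)\in\C(\h^*)$ commutes with everything in $\C(\h^*)$ and that $s_\al s_\al(h)=h s_\al$ inside the cross product. On the other hand $h\cdot\th_\al=\tfrac1{\calph}(h s_\al - h)$. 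Subtracting,
\[
\th_\al\cdot s_\al(h)-h\cdot\th_\al=\tfrac1{\calph}\big(h-s_\al(h)\big)=\tfrac1{\calph}\cdot\langle\al,h\rangle\,\calph=\langle\al,h\rangle,
\]
since $h-s_\al(h)=\langle\al,h\rangle\calph$ by definition of the reflection $s_\al$. Together with the defining relations \eqref{braid} of $\nil(\W)$ (which hold for the $\th_\al$ by the cited fact that $\th_w$ is well-defined and $\th_w\th_y=\th_{wy}$ or $0$), this shows $\nil(\h,\W)$ is a quotient of the abstract algebra $\mathcal P$ presented by $\nil(\W)$, $\sym\h$, and \eqref{def-rel}.

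\textbf{Step 2: the presentation is not too big.} Let $\mathcal P$ denote the abstract algebra. Using the relation \eqref{def-rel} one can move any element of $\sym\h$ past any $\bt_\al$ at the cost of lower-degree error terms: $\bt_\al h = s_\al(h)\bt_\al + \langle\al,h\rangle$ (here reading $h$ as a generator and extending by the Leibniz-type rule to all of $\sym\h$). Iterating, every element of $\mathcal P$ can be rewritten as a left $\sym\h$-linear combination of the monomials $\bt_w$, $w\in\W$ — this is the standard ``straightening'' / PBW-type normal form argument, and it shows $\mathcal P$ is spanned over $\sym\h$ (acting on the left) by $\{\bt_w: w\in\W\}$. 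Hence $\dim_{\C(\h^*)}\big(\C(\h^*)\otimes_{\sym\h}\mathcal P\big)\le |\W|$. But the surjection $\mathcal P\onto\nil(\h,\W)$ becomes, after this base change, a surjection onto $\C(\h^*)\rtimes\W$, which has dimension exactly $|\W|$ over $\C(\h^*)$; moreover $\nil(\h,\W)$ is by construction free of rank $|\W|$ as a left $\sym\h$-module. Therefore the spanning set $\{\bt_w\}$ of $\mathcal P$ maps to a basis, so $\mathcal P\to\nil(\h,\W)$ is injective, hence an isomorphism.

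\textbf{Main obstacle.} The delicate point is Step 2: verifying that the straightening procedure in $\mathcal P$ actually terminates and yields a well-defined normal form independent of the order of reductions, so that $\{\bt_w\}$ genuinely \emph{spans} $\mathcal P$ over $\sym\h$ (rather than merely generating). One clean way around the diamond-lemma bookkeeping is to argue indirectly: build a left $\sym\h$-module $V$ free with basis $\{[w]:w\in\W\}$, define operators on $V$ by the formulas $\bt_\al\cdot[w]=[s_\al w]$ if $\ell(s_\al w)>\ell(w)$ and $0$ otherwise, and multiplication-by-$h$ via the twisted rule dictated by \eqref{def-rel}, check these satisfy the relations of $\mathcal P$ (so $V$ is a $\mathcal P$-module), and observe that $1\mapsto[e]$ gives a surjection $\mathcal P\to V$ of left $\sym\h$-modules splitting the spanning map — this forces $\{\bt_w\}$ to be $\sym\h$-independent in $\mathcal P$ and pins down the rank. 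Since the excerpt cites \cite[Theorem 11.1.2]{Ku} for this proposition, I would ultimately defer the verification that $\nil(\h,\W)$ is free of rank $|\W|$ over $\sym\h$ (stated just above) and that $\{\bt_w\}$ is a basis to that reference, and present Step 1 together with the counting argument of Step 2 as the proof.
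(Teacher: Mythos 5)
The paper itself offers no proof of this proposition: it is quoted verbatim from Kumar's book \cite[Theorem 11.1.2]{Ku}, so there is nothing internal to compare with, and your argument is essentially the standard proof of that reference; it is correct in outline. Two small points. First, the straightening rule you extract from \eqref{def-rel} has a sign slip: substituting $s_\al(h)$ for $h$ in the relation gives $\bt_\al\, h=s_\al(h)\,\bt_\al-\langle\al,h\rangle$, because $\langle\al,s_\al(h)\rangle=-\langle\al,h\rangle$; this is immaterial, since all that matters is that the error term lies in $\sym\h$ (and, by induction on degree, $\bt_\al f=s_\al(f)\,\bt_\al+g$ with $g\in\sym\h$ for every $f\in\sym\h$). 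Second, the ``main obstacle'' you isolate is not an obstacle: uniqueness of the normal form is never needed. It suffices that the elements $\bt_w$, $w\in\W$, span the presented algebra ${\mathcal P}$ as a left $\sym\h$-module, which your induction (together with $\bt_w\bt_y=\bt_{wy}$ or $0$ inside $\nil(\W)$) provides. Injectivity of the surjection ${\mathcal P}\onto\nil(\h,\W)$ is then immediate: write $x=\sum_w f_w\bt_w$ with $f_w\in\sym\h$; if $x\mapsto 0$ then $\sum_w f_w\th_w=0$, and since the $\th_w$ form a left $\sym\h$-basis of $\nil(\h,\W)$ --- this is built into the paper's definition of $\nil(\h,\W)$, the $\sym\h$-independence of the $\th_w$ inside $\C(\h^*)\rtimes\W$ being clear from the triangular relation between $\{\th_w\}$ and $\{w\}$ --- all $f_w$ vanish, so $x=0$. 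Hence both the base-change dimension count and the auxiliary module $V$ can be dropped: Step 1 (with the intermediate term $s_\al(h)s_\al$ corrected to $h\,s_\al$, as you in fact do) plus the spanning induction already yields the presentation, and the only inputs you defer to \cite{Ku} are the facts about $\nil(\W)$ and the map $\bt_\al\mto\th_\al$ that the paper itself quotes as part of the setup.
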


The subspace $\sym\h\sset\C(\h^*)$ is stable under the action of the subalgebra $\nil(\h,\W)\sset \C(\h^*)\rtimes \W$
on  $\C(\h^*)$. Conversely, one has
\begin{thm}[\cite{Ku},  Section   11.2] \label{ku} The $\nil(\h,\W)$-action on $\sym\h$ is faithful, and we have 
\[\nil(\h,\W)=\{u\in \C(\h^*)\rtimes \W\mid u(\sym\h)\subseteq\sym\h\}.
\]
\end{thm}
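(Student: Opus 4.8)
The inclusion $\nil(\h,\W)\subseteq E$, where $E:=\{u\in\C(\h^*)\rtimes\W\mid u(\sym\h)\subseteq\sym\h\}$, has already been observed, so the task is to prove faithfulness of the $\nil(\h,\W)$-action on $\sym\h$ together with the reverse inclusion $E\subseteq\nil(\h,\W)$. Write $\mathcal Q:=\C(\h^*)\rtimes\W$ and expand $u\in\mathcal Q$ uniquely as $u=\sum_{w\in\W}f_w\,w$ with $f_w\in\C(\h^*)$. Faithfulness is routine: if some nonzero $u$ annihilates $\sym\h$, take one with the fewest nonzero coefficients, fix $w_0$ with $f_{w_0}\neq0$, and for $\psi\in\sym\h$ subtract $w_0(\psi)$ times the identity $\sum_w f_w\,w(\phi)=0$ from its substitution $\phi\mapsto\psi\phi$; the $w_0$-term drops out, so by minimality $f_w\big(w(\psi)-w_0(\psi)\big)=0$ for all $w,\psi$. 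Hence every $w$ with $f_w\neq0$ agrees with $w_0$ on $\sym\h$, i.e.\ $w=w_0$, and then $f_{w_0}=f_{w_0}\cdot w_0(1)=u(1)=0$, a contradiction. In particular $\mathcal Q$ acts faithfully on $\C(\h^*)$.

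For $E\subseteq\nil(\h,\W)$ the plan is to localize at height-one primes of $\sym\h$. Since $\{\th_w\}_{w\in\W}\subseteq\nil(\h,\W)$ is a $\C(\h^*)$-basis of $\mathcal Q$, one has $\C(\h^*)\otimes_{\sym\h}\nil(\h,\W)=\mathcal Q$; as $\sym\h$ is a polynomial ring and $\nil(\h,\W)$ is $\sym\h$-free, it is reflexive, hence $\nil(\h,\W)=\bigcap_{\mathfrak p}\nil(\h,\W)_\mathfrak p$ inside $\mathcal Q$, the intersection being over height-one primes $\mathfrak p$. Being a submodule of $\mathcal Q$, $E$ is torsion-free over $\sym\h$, so $E\subseteq E_\mathfrak p$ for every such $\mathfrak p$; thus it suffices to prove $E_\mathfrak p\subseteq\nil(\h,\W)_\mathfrak p$ for each $\mathfrak p$. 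I will also use that $\sym\h\rtimes\W\subseteq\nil(\h,\W)$, which follows from $s_\al=\check\al\,\th_\al+1$ and the commutation relations (any normally ordered product of the $\th_\al$ and of elements of $\sym\h$ lies in $\bigoplus_w\sym\h\,\th_w$); consequently $(\sym\h)_\mathfrak p\rtimes\W\subseteq\nil(\h,\W)_\mathfrak p$ and every $w\in\W$ itself lies in $\nil(\h,\W)$.

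Now fix $\mathfrak p$ and $u=\sum_w f_w\,w\in E$, and note that $v_\mathfrak p\big(f_w\,w(\phi)\big)=v_\mathfrak p(f_w)+v_{w^{-1}\mathfrak p}(\phi)$ for $\phi\in\sym\h$. If $\mathfrak p$ is \emph{not} a root hyperplane $(\check\al)$, then, since the only elements of $\W$ with a hyperplane of fixed points are the reflections $s_\al$ (fixing $(\check\al)$), no $w\neq e$ stabilizes $\mathfrak p$, so the primes $w^{-1}\mathfrak p$ are pairwise distinct; were some $f_w$ to have a pole along $\mathfrak p$, with $-N=\min_w v_\mathfrak p(f_w)<0$ attained at $w_0$, one could pick $\phi\in\sym\h$ with $v_{w_0^{-1}\mathfrak p}(\phi)=0$ but $v_{w^{-1}\mathfrak p}(\phi)$ large for the other minimizers, forcing $v_\mathfrak p(u(\phi))=-N<0$, contradicting $u(\phi)\in\sym\h$. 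So all $f_w\in(\sym\h)_\mathfrak p$, and since the coroots in the denominators of the $\th_w$ are units in $(\sym\h)_\mathfrak p$, we get $E_\mathfrak p\subseteq(\sym\h)_\mathfrak p\rtimes\W=\nil(\h,\W)_\mathfrak p$. In particular the coefficients $f_w$ of any $u\in E$ have poles only along root hyperplanes. It remains to treat $\mathfrak p=(\check\al)$. Here the claim reduces to showing that for $u\in E$ one has (i)~$v_\mathfrak p(f_w)\geq-1$ for all $w$ and (ii)~for each pair $\{v,s_\al v\}$ the residues of $f_v$ and $f_{s_\al v}$ along $\mathfrak p$ are negatives of one another --- exactly the shape of $\th_\al=\tfrac1{\check\al}(s_\al-1)$, whose coefficients at $v$ and $s_\al v$ are $\mp\tfrac1{\check\al}$. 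Granting (i)--(ii), the element $u':=u-\sum_{\{v,s_\al v\}}(\check\al f_{s_\al v})\,\th_\al v$ lies in $\nil(\h,\W)_\mathfrak p$, has $s_\al v$-coefficient $0$ and $v$-coefficient $f_v+f_{s_\al v}$ --- regular at $\mathfrak p$ by (ii), with poles only along the other root hyperplanes, which are units in $(\sym\h)_\mathfrak p$ --- so $u'\in(\sym\h)_\mathfrak p\rtimes\W\subseteq\nil(\h,\W)_\mathfrak p$, whence $u\in\nil(\h,\W)_\mathfrak p$, as needed.

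The main obstacle is establishing (i)--(ii). I would prove them by applying $u\in E$ to the functions $\check\al^{\,k}\psi$ $(k\geq0,\ \psi\in\sym\h)$ and reducing modulo $\mathfrak p$: in the residue field $\kappa(\mathfrak p)=\operatorname{Frac}(\sym\h/\mathfrak p)$ the reflection $s_\al$ acts trivially, while the elements of the setwise stabilizer $\W_\mathfrak p$ of $\mathfrak p$ induce automorphisms of $\kappa(\mathfrak p)$ that coincide in pairs modulo $\langle s_\al\rangle$; a Dedekind linear-independence argument for fixed $k$, combined with a Vandermonde argument in $k$ to separate the contributions of $w\notin\W_\mathfrak p$ (whose ``weights'' $w(\check\al)^k$ vary with $k$) from those of $w\in\W_\mathfrak p$, then pins down the leading and sub-leading coefficients of the $f_w$ along $\mathfrak p$ and yields (i)--(ii). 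Some care is required because $\W_\mathfrak p$ may be strictly larger than $\langle s_\al\rangle$; one uses that the pointwise stabilizer of a root hyperplane equals $\langle s_\al\rangle$ and that $\sigma(\check\al)=\pm\check\al$ for every $\sigma\in\W_\mathfrak p$. This residue-field analysis is the one genuinely technical ingredient; everything else is formal, given Kumar's structure results for $\nil(\h,\W)$ recalled above.
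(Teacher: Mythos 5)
Since the paper imports this statement from \cite{Ku} without proof, there is no internal argument to compare against; your localization-at-height-one-primes strategy is therefore a legitimate independent route, and several pieces of it are sound: the faithfulness argument, the reduction $\nil(\h,\W)=\bigcap_{\mathfrak p}\nil(\h,\W)_{\mathfrak p}$ (best justified coordinate-wise in the free basis $\th_w$ rather than by ``reflexivity''), and the correction step at $\mathfrak p=(\check\al)$ \emph{granting} your (i)--(ii). However, there are two genuine gaps. First, in the case $\mathfrak p\neq(\check\al)$ you infer from ``only reflections fix a hyperplane pointwise'' that no $w\neq e$ stabilizes $\mathfrak p$; this conflates pointwise fixing with setwise stabilization, and the latter can be nontrivial for primes that are not coroot hyperplanes (for $\W$ of type $A_1\times A_1$ the element $s_\al s_\be$ stabilizes the prime $(\check\al\check\be-1)$). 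When two minimizers $w_1\neq w_2$ satisfy $w_1^{-1}\mathfrak p=w_2^{-1}\mathfrak p$, the test function $\phi$ you describe does not exist and polar parts can a priori cancel; ruling this out requires comparing the induced embeddings $\sym\h/w_i^{-1}\mathfrak p\to\sym\h/\mathfrak p$ and invoking Dedekind independence, with coincidence of embeddings forcing $w_1w_2^{-1}$ to fix $V(\mathfrak p)$ pointwise. None of this is in your Case 1 as written. Moreover, in the affine incarnation that the paper actually needs ($\h=\t\oplus\C\hb$, $\W$ containing the translations), the hyperplane $\hb=0$ is fixed pointwise by every translation, and $\frac1\hb(e^\mu-1)$ does preserve $\sym\h$ although its coefficients have a pole along the non-root divisor $(\hb)$; so the conclusion of your Case 1 cannot be established in that generality by any argument, and the dichotomy must be organized around pointwise stabilizers of divisors (in Kumar's Kac--Moody setting the full Cartan, containing the scaling element, removes this phenomenon).

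Second, and decisively, the core of the theorem --- your claims (i) (pole order at most one along $(\check\al)$) and (ii) (residues of $f_v$ and $f_{s_\al v}$ cancel) --- is not proved: the final paragraph is a plan (``I would prove them by\dots''), and the plan glosses over the points where the work actually lies. Dedekind independence applied to the leading polar coefficients only yields cancellation of the top-order residues within each pair $\{v,s_\al v\}$, and such cancellation is compatible with poles of order $\geq2$ (the operator $\frac{1}{\check\al^{\,2}}(s_\al-1)$ has cancelling leading coefficients yet does not preserve $\sym\h$); excluding higher-order poles genuinely requires running the family $\check\al^{\,k}\psi$ and controlling subleading terms, and the proposed ``Vandermonde in $k$'' separation needs justification because the elements $\overline{w(\check\al)}$ in the residue ring need not be distinct, while the setwise stabilizer of $(\check\al)$ can be much larger than $\langle s_\al\rangle$. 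Since this local analysis at the reflection hyperplanes is precisely the substance of the cited theorem, the proposal as it stands is an outline of a plausible proof with its central step missing, rather than a proof.
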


We equip  $\sym\h$ with  the natural grading and extend it to 
 a grading on  $\nil(\h,\W)$ by placing $\th_w$ in degree $-\ell(w)$. This makes $\nil(\h,\W)$ a $\Z$-graded
algebra, resp. $\sym\h$ a $\Z_{\geq0}$-graded $\nil(\h,\W)$-module.

The assignment  $s_\al\mto \check\al\cdot\th_\al+1,\ \al\in\Ups$, extends to an algebra embedding
$\C\W\into \nil(\h,\W)$. We will identify $\C\W$ with its image, which is contained in the degree zero 
homogeneous component of $\nil(\h,\W)$.
It is clear that for any $w\in \W$ and $\al\in \cR$, inside $\W\ltimes \C(\h^*)$,  one has 
$\th_{w(\al)}=w\cdot\th_\al\cdot w\inv$. Taking $\al$ to be a simple root, we see that
the element $\th_\be$ belongs to  $\nil(\h,\W)$ for any root $\be$.

Given an $\sym\h\rtimes \W$-module $M$ and $\al\in\Ups$, let
$M^\pm=\{m\in M\mid s_\al(m)=\pm m\}$. It is clear that the action of $\check\al\in \h\sset\sym\h$
on $M$ sends $M^\pm$ to $M^\mp$.
Observe also that
the action of $\nil(\h,\W)$ on 
any  $\nil(\h,\W)$-module  is $(\sym\h)^\W$-linear
since the center of  $\nil(\h,\W)$ equals $(\sym\h)^\W$.

\begin{lem}\label{morita} Assume that the group  $\W$ is finite. Then
the $\nil(\h,\W)$-action  on $\sym\t$ yields an algebra isomorphism
\beq{gkv} \nil(\h,\W)\cong \End_{_{(\sym\h)^\W}}\,\sym\h.
\eeq

Furthermore, for a $\sym\h\rtimes \W$-module $M$, the following are equivalent:

\vi 
The natural map $\sym\h\,\o_{_{(\sym\h)^\W}}\, M^\W\to M$ is an isomorphism.

\vii The action of  $\W\ltimes\sym\h$ on $M$ admits a (necessarily unique) extension to an $\nil(\h,\W)$-action on ~$M$.

\viii For every $\al\in\Ups$, the action map $\check\al: M^+\to M^-$ is a bijection.
\footnote{The  equivalence of (ii) and (iii) was pointed out to me  by Gwyn Bellamy.}
\end{lem}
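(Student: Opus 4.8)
The plan is to first establish the isomorphism \eqref{gkv}, then prove the cycle of implications (i)$\Rightarrow$(iii)$\Rightarrow$(ii)$\Rightarrow$(i), using the isomorphism \eqref{gkv} as the main tool connecting the abstract module-theoretic conditions to concrete statements about $\End_{(\sym\h)^\W}\sym\h$. For the isomorphism \eqref{gkv}, I would use Theorem \ref{ku}: the algebra $\nil(\h,\W)$ is exactly the subalgebra of $\C(\h^*)\rtimes\W$ preserving $\sym\h$. Since $\W$ is finite, $\sym\h$ is a free $(\sym\h)^\W$-module of rank $\#\W$ (Chevalley), and $\C(\h^*)\rtimes\W$ is a matrix algebra over the fraction field $\C(\h^*)^\W=\op{Frac}((\sym\h)^\W)$; concretely $\C(\h^*)\rtimes\W\cong\End_{\C(\h^*)^\W}\C(\h^*)$. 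Under this identification, an element of $\C(\h^*)\rtimes\W$ preserves $\sym\h$ if and only if the corresponding $\C(\h^*)^\W$-linear endomorphism of $\C(\h^*)$ restricts to an $(\sym\h)^\W$-linear endomorphism of $\sym\h$. This yields \eqref{gkv} once one checks that every element of $\End_{(\sym\h)^\W}\sym\h$ extends to $\C(\h^*)$ (clear, by localizing) and lies in the image (also clear). The faithfulness of the $\nil(\h,\W)$-action on $\sym\h$, already asserted in Theorem \ref{ku}, makes the map well-defined and injective; surjectivity is the content just sketched.

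Next, for (i)$\Rightarrow$(iii): assuming $\sym\h\o_{(\sym\h)^\W}M^\W\iso M$, decompose with respect to a fixed $\al\in\Ups$. Write $\sym\h=(\sym\h)^{+}\oplus(\sym\h)^{-}$ for the $s_\al$-eigenspace decomposition; both summands are free $(\sym\h)^{\langle s_\al\rangle}$-modules, and multiplication by $\check\al$ gives an isomorphism $(\sym\h)^{+}\iso(\sym\h)^{-}$ after inverting $\check\al$, but in fact already an isomorphism of the relevant graded pieces since $(\sym\h)^{-}=\check\al\cdot(\sym\h)^{+}$ (a standard fact: the $s_\al$-anti-invariants in a polynomial ring form the principal ideal generated by $\check\al$ inside the invariants-module). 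Tensoring the isomorphism $M\cong\sym\h\o_{(\sym\h)^\W}M^\W$ and taking $s_\al$-eigenspaces, one identifies $M^{\pm}$ with $(\sym\h)^{\pm}\o_{(\sym\h)^\W}M^\W$, and then multiplication by $\check\al$ on $M$ corresponds to multiplication by $\check\al$ on the first tensor factor, which is the bijection $(\sym\h)^+\iso(\sym\h)^-$ tensored with the identity on $M^\W$ — hence a bijection $M^+\iso M^-$.

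For (iii)$\Rightarrow$(ii): given that $\check\al: M^+\to M^-$ is bijective for all $\al\in\Ups$, I want to define $\th_\al$ acting on $M$. The natural candidate is $\th_\al=\frac{1}{\check\al}(s_\al-1)$: for $m\in M$, write $m=m^++m^-$, note $(s_\al-1)m=-2m^-\in M^-$, and set $\th_\al(m):=-2\,(\check\al)^{-1}(m^-)$ using the assumed bijectivity to make sense of $(\check\al)^{-1}$. One then checks that these operators satisfy the defining relations: $(\th_\al)^2=0$ is immediate since $\th_\al(m)\in M^+$ and $(s_\al-1)$ kills $M^+$; the commutation relation \eqref{def-rel}, $\th_\al\cdot s_\al(h)-h\cdot\th_\al=\langle\al,h\rangle$ for $h\in\h$, is a formal identity in $\C(\h^*)\rtimes\W$ that one transports to $M$ after verifying all operators involved are well-defined; and the braid relations \eqref{braid} follow because, once the generators are defined compatibly with the $\W\ltimes\sym\h$-structure, the subalgebra of $\End_\C M$ they generate receives a map from $\nil(\h,\W)$ — alternatively, one checks the braid relations on the localization $M[\check\al^{-1}:\al\in\cR]$ where they hold in $\C(\h^*)\rtimes\W$, and then notes the operators are already defined on $M$ itself. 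Uniqueness of the extension follows since $\th_\al$ is forced by the formula $\check\al\cdot\th_\al=s_\al-1$ together with injectivity of $\check\al$ on $M^+$.

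The implication (ii)$\Rightarrow$(i) is where I expect to use \eqref{gkv} most essentially, and it is the step most likely to need care. Given an $\nil(\h,\W)$-action on $M$ extending the $\W\ltimes\sym\h$-action, I want to show the natural multiplication map $\mu: \sym\h\o_{(\sym\h)^\W}M^\W\to M$ is bijective. For surjectivity: using the symmetrizer-type idempotent considerations, or more directly using that $1\in\sym\h$ together with the $\nil(\h,\W)$-action generates — concretely, by \eqref{gkv} the element $1\in\sym\h$ is a cyclic generator of $\sym\h$ over $\nil(\h,\W)$, so there exist $u_i\in\nil(\h,\W)$ and a $(\sym\h)^\W$-basis relation expressing any $f\in\sym\h$ as $\sum u_i(1)\cdot(\text{coefficients})$; applying the same $u_i$ to an arbitrary $m\in M$ and projecting appropriately shows $m\in\im\mu$. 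For injectivity: tensor the situation with $\C(\h^*)$ over $(\sym\h)^\W$; over the field $\C(\h^*)^\W$ the map becomes $\C(\h^*)\o_{\C(\h^*)^\W}(M\o\C(\h^*)^\W)^\W\to M\o\C(\h^*)^\W$, which is an isomorphism by the classical fact that $\C(\h^*)\rtimes\W\cong\End_{\C(\h^*)^\W}\C(\h^*)$ is Morita-trivial (so $\C(\h^*)$ is a progenerator). Since $\sym\h$ is free, hence torsion-free, over $(\sym\h)^\W$, the map $\mu$ injects into its localization, giving injectivity. The main obstacle is organizing the (ii)$\Rightarrow$(i) Morita argument cleanly without circularity — one must be careful that the "extension to an $\nil(\h,\W)$-action" hypothesis is used to put $M$ in the module category over $\nil(\h,\W)\cong\End_{(\sym\h)^\W}\sym\h$, and then invoke that $\sym\h$ is a progenerator for $(\sym\h)^\W\mmod$ precisely because $\W$ is finite and Chevalley's theorem applies; I would state this as a lemma about the Morita context $((\sym\h)^\W,\nil(\h,\W),\sym\h,\Hom_{(\sym\h)^\W}(\sym\h,(\sym\h)^\W))$ and deduce both (i) and its converse from the standard Morita dictionary.
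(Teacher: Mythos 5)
Your handling of \eqref{gkv}, of (i)$\Rightarrow$(iii), and (through the Morita context you set up at the end) of (ii)$\Rightarrow$(i) is essentially sound and consistent with the paper, which also gets \eqref{gkv} from Theorem \ref{ku} plus freeness of $\sym\h$ over $(\sym\h)^\W$ and gets (i)$\Leftrightarrow$(ii) from the Morita equivalence of $(\sym\h)^\W$ and $\End_{(\sym\h)^\W}\sym\h$. The genuine gap is in (iii)$\Rightarrow$(ii), at the braid relations. Your first justification is circular: a homomorphism from $\nil(\h,\W)$ to the subalgebra of $\End_\C M$ generated by your operators exists only after the relations \eqref{braid} have been verified for those operators, which is exactly what is at stake. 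Your second justification --- verify the relations on the localization $M[\check\al^{-1}]$ and ``note the operators are already defined on $M$'' --- fails whenever $M$ has $\sym\h$-torsion: then $M$ does not inject into its localization, so identities checked there say nothing about the operators on $M$. Torsion is not a marginal case but the typical one for this lemma (e.g.\ take $M^\W=(\sym\h)^\W/\mathfrak m$ and $M=\sym\h\o_{(\sym\h)^\W}M^\W$, a finite-dimensional module satisfying (i)--(iii); the holonomic modules the paper ultimately cares about are of this torsion type). The same fallacy appears in your first injectivity argument for (ii)$\Rightarrow$(i): freeness of $\sym\h$ over $(\sym\h)^\W$ does not make $\sym\h\o_{(\sym\h)^\W}M^\W$ torsion-free when $M^\W$ has torsion; there, however, your fallback to the Morita dictionary is correct (and is the paper's own argument), so that step survives. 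Note also that the rank-one relations $(\th_\al)^2=0$ and \eqref{def-rel} can, unlike the braid relations, be verified by a direct eigenspace computation on $M$ itself, so it is only \eqref{braid} that is left unproved by your construction.

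The paper sidesteps this entirely: (i)$\Leftrightarrow$(ii) is pure Morita theory ($\sym\h$ is a progenerator over $(\sym\h)^\W$ and $\nil(\h,\W)\cong\End_{(\sym\h)^\W}\sym\h$), so the $\nil(\h,\W)$-action in (ii) is obtained by transporting the evident action on $\sym\h\o_{(\sym\h)^\W}M^\W$ through the isomorphism in (i), with no relations ever checked on $M$; the exchange between (ii) and (iii) is then done with the identity $s_\al=\check\al\cdot\th_\al+1$. If you insist on constructing the action directly out of (iii), the honest way to obtain \eqref{braid} is to invoke the nontrivial fact that for a Coxeter group the braid relations are consequences of the quadratic and commutation relations (Lonergan \cite{Lo2}, i.e.\ the algebra defined by those relations alone maps isomorphically onto $\nil(\h,\W)$); this is precisely the input the paper itself needs for the analogous step in the affine setting, in the proof of Proposition \ref{hh-morita}, and no soft localization argument can substitute for it on modules with torsion.
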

\begin{proof}
Recall that $\sym\h$ is a free $(\sym\h)^\W$-module of  rank $\#\W$.
 Isomorphism \eqref{gkv} is  a simple consequence of Theorem \ref{ku}, cf. also 
\cite[Proposition 2.3]{GKV}.
The equivalence of (i) and (ii) now follows from
Morita equivalence of the
algebras $(\sym\h)^\W$ and $\End_{_{(\sym\h)^\W}}\,\sym\h$.
The equivalence of (ii) and (iii)  easily follows from the formula $s_\al=\check\al\cdot\th_\al+1$.
\end{proof}

\subsection{Degenerate nil-DAHA}\label{daha-sec}
Let  $\Ups\sset R\sset\t^*$  be  a finite reduced root datum,
and   $\Ups\aff=\Ups\sqcup\{\al_0\}\sset R\aff\sset\t^*\aff$
an associated affine root datum. 
Let $\BX^*$ and $Q$ be  the weight and root lattice
of $R$, respectively. 
Let $\wt W=W\ltimes \BX^*\supset W\aff=W\ltimes Q$
be the extended affine Weyl group. Thus, $\wt W= (\BX^*/Q) \ltimes W\aff$.

Similar to the case of affine Hecke algebras,
we define $\nil(\t\aff,\wt W):=\BX^*\ltimes_Q \nil(\t\aff,W\aff)$.
To avoid confusion, we write elements of
$\BX^*$, viewed as a subgroup of $\wt W$, in the form $e^\mu$ and also use the same symbol
for the image of that element 
under the algebra embedding $\C\wt W\into \nil(\t\aff,\wt W)$. 
Thus, for any root $\al\in R$ in $\nil(\t\aff,\wt W)$,
there is an element $\th_{e^\al}$ ($=\th_w$ for $w=e^\al$), and also a different element,
$\th_\al$, the  Demazure element
associated with $\al$   viewed as a (not necessarily simple)  root in $R\aff$.

Let $\hb\in \t\aff$ be the minimal imaginary coroot. We identify $\t\aff$ with $\t\oplus \C\hb$,
resp. $\sym(\t\aff)$ with $\C[\t^*][\hb]$.
The natural linear action of $\WW$  on $\t\aff$ induces
a $\WW$-action on $\C[\t^*][\hb]$, by algebra automorphisms. Explicitly,
for $\mu\in\BX^*$, the action of  $e^\mu$ on $\C[\t^*][\hb]$
is given by the formula
$(e^\mu f)(x, \hb)=f(x-\hb\mu,\hb)$. 
The action of the algebra $\nil(\t\aff,W\aff)$ on $\C[\t^*][\hb]$
extends to a faithful $\nil(\t\aff,\WW)$-action which agrees with the
$\WW$-action,
where  $\WW$ is viewed as a subset of
 $\nil(\t\aff,\WW)$ via the canonical embedding. 
From the above formula for the action of $e^\mu$, one finds
the following  commutation relations in $\nil(\t\aff,\wt W)$:
\beq{ddh}
\xi\cdot e^\mu=e^\mu\cdot (\xi +\langle\mu,\xi\rangle\cdot \hb),\qquad \mu\in \BX^*,\ \xi\in\t.
\eeq

Let $\dh(T)$ be the Rees algebra of the algebra $\dd(T)$ of differential operators, equipped with the
filtration by  order of the differential operator. Recall the notation
$t^\mu\in\C[T]$ for the function on $T$ associated with $\mu\in\BX^*$.
For $\xi\in\t$, let $\partial_\xi$ denote  the corresponding translation invariant vector field on $T$.
We may view $t^\mu$ and $\partial_\xi$ as elements of $\dh(T)$ placed in degrees $0$ and $1$,
respectively.
The commutation relations \eqref{ddh} are identical to the 
commutation relations  in the algebra $\dh(T)$ between the elements $t^\mu$ and $\partial_\xi$.
Further, it follows from  \eqref{ddh} that the set $\op{S}:=(\sym\t\aff)\sminus\{0\}$ is an Ore subset of the algebra
$\dh(T)$. The  corresponding noncommutative localization $\op{S}\inv\cdot\dh(T)$
may be viewed as
a kind of  microlocalization of $\dh(T)$.
We obtain  algebra embeddings
\beq{ncloc}
\xymatrix{
W\ltimes \dh(T)\ \ar@{^{(}->}[rrrr]^<>(0.5){w\mto w,\ t^\mu\mto e^\mu,\  \xi\mto \partial_\xi}&&&&
\ \HH\ \ar@{^{(}->}[r]&\ \WW\ltimes \Q(\t\aff)=W\ltimes \op{S}\inv\cdot\dh(T).
}
\eeq

Given a $\C[\hb]$-algebra $A$ and $c\in\C$, we let $A|_{\hb=c}:=A/(\hb
-c)A$ denote its specialization at $\hb=c$. We have
$\dh(T)|_{\hb=1}=\dd(T)$,
resp. $\dh(T)|_{\hb=0}=\C[\T^*T]$.
We define  $\HH=\nil(\t\aff,\wt W)|_{\hb=1}$.
The  graded algebra embedding $W\ltimes \dh(T)\into\HH$ induces
an embedding  $W\ltimes \dd(T)\into\HH$, resp. $W\ltimes \C[\T^*T]\into
\gr\HH$,
of specializations at $\hb=1$, resp. $\hb=0$.

Recall from the introduction, the notation $\pe=\frac{1}{\#W}\sum_{w\in
  W}\, w\in \C W$ and
the  spherical  algebra  $\pe\nil(\t\aff,\wt W)\pe$, resp.
$\hsph:=\pe\HH\pe$. 
The space $\nil(\t\aff,\wt W)\pe$, resp. $\HH\pe$, has the natural structure
of an $(\nil(\t\aff,\wt W),\,\pe\nil(\t\aff,\wt W)\pe)$-bimodule, resp. $(\HH,\HS)$-bimodule.
The  embedding $W\ltimes \dh(T)\into\HH$ induces an
 embedding $\dd(T)^W\into\HS$, resp. $\C[\T^*T]^W\into \gr\HS$.

\begin{lem}\label{morita-prop}  Let $c\in \C$ and put $\nil_c=\nil(\t\aff,\wt W)|_{\hb=c}$. The algebras $\nil(\t\aff,\wt W)$ and $\pe\nil(\t\aff,\wt W)\pe$,
resp. $\nil_c$ and $\pe\nil_c\pe$, are Morita equivalent
and the action map   induces an  algebra isomorphism
\[\nil(\t\aff,\wt W)\ \iso\ \End_{\pe\nil(\t\aff,\wt W)\pe} \nil(\t\aff,\wt W)\pe,
\quad\text{\em resp.}\quad
\nil_c\ \iso\  \End_{\pe\nil_c\pe}\,\nil_c\pe.
\]
\end{lem}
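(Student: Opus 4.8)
The plan is to deduce the Morita statement from the idempotent $\pe$ being *full* in the relevant algebra, i.e.\ from the identity $\nil(\t\aff,\wt W)\pe\nil(\t\aff,\wt W)=\nil(\t\aff,\wt W)$, together with the projectivity of $\nil(\t\aff,\wt W)\pe$ as a right $\pe\nil(\t\aff,\wt W)\pe$-module; these are exactly the two ingredients needed to invoke the standard corner-ring Morita theorem, which also supplies the isomorphism $\nil(\t\aff,\wt W)\iso\End_{\pe\nil(\t\aff,\wt W)\pe}\nil(\t\aff,\wt W)\pe$. The same argument will run verbatim for each specialization $\nil_c$, since $\pe$ lives in $\C W\subset\nil(\t\aff,\wt W)$ and is preserved by the quotient $\nil(\t\aff,\wt W)\to\nil_c$; alternatively one can note that fullness of $\pe$ is inherited by any quotient algebra in which the image of $\pe$ is still an idempotent.

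First I would establish fullness. The key point is the classical fact (Lemma \ref{morita}, isomorphism \eqref{gkv}, applied to the finite Coxeter group $W$ acting on $\t$) that $\nil(\t,W)\cong\End_{(\sym\t)^W}\sym\t$ and that $\sym\t$ is a free $(\sym\t)^W$-module of rank $\#W$; Morita equivalence of $(\sym\t)^W$ and $\nil(\t,W)$ then gives that $\pe$ is a full idempotent in $\nil(\t,W)$, because $\pe\nil(\t,W)\pe\cong(\sym\t)^W$ and $\nil(\t,W)\pe\cong\sym\t$ as bimodules. Now $\nil(\t\aff,\wt W)$ is obtained from $\nil(\t\aff,W\aff)\supset\nil(\t,W)$ by adjoining the lattice part $\BX^*$, and one has the embedding $W\ltimes\dh(T)\hookrightarrow\HH$ of \eqref{ncloc} (and its $\hb$-graded analogue for $\nil(\t\aff,\wt W)$). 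Using the commutation relations \eqref{ddh}/\eqref{ncloc}, every element of $\nil(\t\aff,\wt W)$ can be written with the Demazure/lattice generators on the left and $\sym\t\aff$-coefficients, so the two-sided ideal $\nil(\t\aff,\wt W)\pe\nil(\t\aff,\wt W)$ already contains $\sym\t\aff=\nil(\t,W)\pe\nil(\t,W)\cdot(\text{central }\hb\text{-part})$ and is stable under left and right multiplication by the extra generators; hence it is all of $\nil(\t\aff,\wt W)$.

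Next I would check projectivity of $P:=\nil(\t\aff,\wt W)\pe$ over $E:=\pe\nil(\t\aff,\wt W)\pe$. For this one exhibits $P$ as a direct summand of a free $E$-module: decompose $\C W=\bigoplus_{i}\C W e_i$ into a sum of indecomposable left ideals with $\pe=e_0$ one of the minimal idempotents (up to relabeling), observe $\nil(\t\aff,\wt W)=\bigoplus_i\nil(\t\aff,\wt W)e_i$ as a left module, and use that each $\nil(\t\aff,\wt W)e_i$ is a summand of $\nil(\t\aff,\wt W)\pe$ via the $W$-module structure; more cleanly, one uses that over the localization $W\ltimes\op{S}\inv\dh(T)$ the group algebra $\C W$ is semisimple (since $\op{S}\inv\dh(T)$ is a domain containing $\Q$), so $\pe$ and $1-\pe$ split $1$ and $\nil(\t\aff,\wt W)\pe$ is projective over its corner ring by general idempotent theory. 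Combining fullness with projectivity, the functor $\pe(-)\colon\nil(\t\aff,\wt W)\modu\to E\modu$ is an equivalence with quasi-inverse $P\o_E(-)$, and the endomorphism identification $\nil(\t\aff,\wt W)\cong\End_E(P)$ is the standard double-centralizer output.

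The main obstacle is the projectivity/fullness bookkeeping in the presence of the \emph{unbounded} $\sym\t\aff$-coefficients: unlike the finite Coxeter case, $\nil(\t\aff,\wt W)$ is not a finite module over its center, so one cannot directly quote finite-rank freeness, and some care is needed to see that $\nil(\t\aff,\wt W)\pe$ remains a projective $\pe\nil(\t\aff,\wt W)\pe$-module rather than merely flat. I would handle this by working over the Ore localization $\op{S}\inv\dh(T)$ of \eqref{ncloc}, where $\C W$ is semisimple and everything is transparent, and then descending: the element $\pe$, the ideal $\nil(\t\aff,\wt W)\pe\nil(\t\aff,\wt W)$, and the module $\nil(\t\aff,\wt W)\pe$ are all defined integrally, so equalities and summand decompositions verified after localization that are visibly integral statements (fullness is a statement about containment of generators; the splitting $1=\pe+(1-\pe)$ is already integral) pull back. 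For the specializations $\nil_c$ one simply repeats the argument with $\dd(T)$ (for $c=1$) or $\C[\T^*T]$ (for $c=0$) in place of $\dh(T)$, the semisimplicity of $\C W$ again being available, so the same conclusion holds.
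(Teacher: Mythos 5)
Your key step---fullness of $\pe$---is exactly the paper's argument: by \eqref{gkv} (Lemma \ref{morita} for the finite Weyl group) one can write $1=\sum_i h'_i\,\pe\, h''_i$ with $h'_i,h''_i\in\nil(\t,W)$, and since $\nil(\t,W)$ is a subalgebra of $\nil(\t\aff,\wt W)$ this identity persists there and in every specialization $\nil_c$ (the elements $h'_i,h''_i$ and $\pe$ all survive the quotient by $(\hb-c)$), so $\pe$ is a full idempotent in each of these algebras. In fact your fullness paragraph can be shortened: once the two-sided ideal generated by $\pe$ contains $1$ you are done, and no discussion of rewriting elements with lattice generators on the left is needed.

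Where the proposal goes astray is in treating projectivity of $P=\nil(\t\aff,\wt W)\pe$ over $E=\pe\nil(\t\aff,\wt W)\pe$ as a second hypothesis that must be verified separately. The corner-ring Morita theorem needs only fullness: for any ring $A$ with idempotent $e$ such that $AeA=A$, the Morita context $(A,\,eAe,\,Ae,\,eA)$ has both trace maps surjective (the map $eA\o_A Ae\to eAe$ is always an isomorphism, and $Ae\o_{eAe}eA\to A$ is onto precisely because $AeA=A$), and Morita's theorem then delivers, all at once, the equivalence $A\mmod\simeq eAe\mmod$, the fact that $Ae$ is a progenerator over $eAe$, and the double-centralizer isomorphism $A\iso\End_{eAe}(Ae)$. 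So your third and fourth paragraphs are superfluous---which is fortunate, because the arguments in them do not hold up: the claim that each $\nil(\t\aff,\wt W)e_i$ is a summand of $\nil(\t\aff,\wt W)\pe$ is unjustified (and false in general); ``general idempotent theory'' does not make $Ae$ projective over the corner ring $eAe$ for an arbitrary idempotent (it is projective as a left $A$-module, which is a different statement); and projectivity is not a property one can descend from the Ore localization $W\ltimes\op{S}\inv\cdot\dh(T)$ back to the integral algebra. Deleting those paragraphs and citing the full-idempotent form of Morita theory turns your proposal into precisely the paper's proof.
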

\begin{proof} It is immediate from \eqref{gkv} that 
$\pe\nil(W,\t)\pe=(\sym\t)^W$ and, moreover, there exist elements $h'_i,h''_i\in \nil(\t,W)$ such that 
one has $1=\sum_i\ h'_i\cdot\pe\cdot h''_i$. 
Since $\nil(W,\t)$ is a subalgebra of $\nil(\t\aff,\wt W)$, this equation may be viewed as an equation
in $\nil(\t\aff,\wt W)$. Thus, we have  $\nil(\t\aff,\wt W)\pe\nil(\t\aff,\wt W)=\nil(\t\aff,\wt W)$, which   is known 
to imply  all statements of the lemma involving $\nil(\t\aff,\wt W)$.
The case of specializations at $\hb=c$ is similar.
\end{proof}

\begin{prop}\label{hh-morita} \vi   Let $M$ be a $W\ltimes
  \dh(T)$-module. 
The action of  $W\ltimes \dh(T)$ on $M$ can
 be extended (necessarily uniquely) to an $\nil(\t\aff,\WW)$-action on $M$ if and
 only if the natural map $\C[\t^*]\o_{\C[\t^*]^W}M^W\to M$ is an isomorphism. A
 similar statement holds in the case of specializations at $\hb=c$, for any $c\in\C$.

\vii Let  $L$ be a $\dd(T)^W$-module. The action of  $\dd(T)^W$ on $L$ can
 be extended (necessarily uniquely) to an $\HS$-action on $L$ if and only if the map $\sym\t\o_{(\sym\t)^W}L\to \dd(T)\o_{\dd(T)^W}L$,
induced by the inclusion $\sym\t\into\dd(T)$, is an isomorphism.
\end{prop}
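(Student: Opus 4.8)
The plan is to deduce Proposition~\ref{hh-morita} from the nil Hecke algebra criterion in Lemma~\ref{morita}, together with the Morita equivalence of Lemma~\ref{morita-prop}, by passing through the finite Weyl group picture. For part~(i), I would first observe that a $W\ltimes\dh(T)$-module $M$ is the same as a module over $W\ltimes\dh(T)$, and that the subalgebra $W\ltimes\sym(\t\aff)=W\ltimes\C[\t^*][\hb]$ sits inside it; here $\sym(\t\aff)$ plays the role of $\sym\h$ and $\WW$ of the Coxeter-type group in the setup of Section~\ref{nil-sec}, but with the crucial twist that in $\HH$ (and in $\dh(T)$) the elements $e^\mu$ do \emph{not} commute with $\t$, so $\dh(T)$ is genuinely noncommutative. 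The point is that by \eqref{ncloc} we have $\HH\hookrightarrow \WW\ltimes\Q(\t\aff)$, and the extended affine nil Hecke algebra $\nil(\t\aff,\wt W)$ is, by its very definition as a subalgebra of $\WW\ltimes\Q(\t\aff)$ (or of $W\ltimes\op{S}\inv\dh(T)$ after specialization), characterized inside that skew field exactly as in Theorem~\ref{ku}: it is the set of elements preserving $\sym(\t\aff)$, equivalently $\C[\t^*][\hb]$. So extending a $W\ltimes\dh(T)$-action to an $\nil(\t\aff,\WW)$-action amounts to extending the action of $W\ltimes\C[\t^*][\hb]$ to an action of the \emph{finite} nil Hecke algebra $\nil(\t\aff,\WW)$ in a way compatible with the already-present $\dh(T)$-action.

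The key reduction is then: because $\nil(\t\aff,\WW)$ is generated by $\C\WW$, $\sym(\t\aff)$, and the Demazure elements $\th_{\al_0},\th_\al$ ($\al\in\Ups$) subject to \eqref{def-rel}, and because the finite Demazure elements $\th_\al$ ($\al\in\Ups$) are already built from $W$ and $\C[\t^*][\hb]$, the only genuinely new generator to add is $\th_{\al_0}$, whose relations with $\t\aff$ involve the affine simple root and hence the translation part. I would argue that, once one has the $W\ltimes\dh(T)$-action, the compatibility of $\th_{\al_0}$ with the elements $e^\mu=t^\mu$ and $\xi=\pa_\xi$ is \emph{forced} by the commutation relations \eqref{ddh}/\eqref{ncloc}, so the extension is automatically unique if it exists. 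For existence, apply Lemma~\ref{morita} to the \emph{finite} root system: an extension of a $W\ltimes\sym(\t\aff)$-action to an $\nil(\t\aff,\WW)$-action exists iff the natural map $\sym(\t\aff)\o_{\sym(\t\aff)^W}M^W\to M$ is an isomorphism, equivalently (since $\sym(\t\aff)=\C[\t^*][\hb]$ and $\hb$ acts centrally on $M$, a $\dh(T)$-module) iff $\C[\t^*]\o_{\C[\t^*]^W}M^W\to M$ is an isomorphism. One then has to check that this finite-$W$ extension is automatically compatible with the full $\dh(T)$-module structure, i.e.\ that the resulting $\th_\al$'s satisfy the affine braid relations and \eqref{def-rel} against \emph{all} of $\t\aff$ including the translation lattice; this is where one uses that everything is happening faithfully inside the skew field $\WW\ltimes\Q(\t\aff)$, so any relation that holds there and is applied to the faithful module $\C[\t^*][\hb]$ transports to $M$. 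The specialization statement at $\hb=c$ is identical after replacing $\sym(\t\aff)$ by $\sym(\t\aff)|_{\hb=c}$.

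For part~(ii), I would deduce it from part~(i) by the Morita equivalence. By Lemma~\ref{morita-prop} (at $\hb=1$), $\HH$ and $\HS=\pe\HH\pe$ are Morita equivalent via the bimodule $\HH\pe$, and likewise $W\ltimes\dd(T)$ and $\dd(T)^W=\pe(W\ltimes\dd(T))\pe$ are Morita equivalent via $(W\ltimes\dd(T))\pe$ — indeed $\pe(W\ltimes\dd(T))\pe\cong\dd(T)^W$ and, by \eqref{gkv} applied to $\t$ (or directly), $1=\sum_i h_i'\,\pe\,h_i''$ inside $W\ltimes\dd(T)$, so $(W\ltimes\dd(T))\pe(W\ltimes\dd(T))=W\ltimes\dd(T)$. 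Under these two Morita equivalences the functor $M\mapsto \pe M=M^W$ is an equivalence $W\ltimes\dd(T)\mmod\iso\dd(T)^W\mmod$ with quasi-inverse $L\mapsto (W\ltimes\dd(T))\pe\o_{\dd(T)^W}L$; and similarly $\HH\mmod\iso\HS\mmod$. Given a $\dd(T)^W$-module $L$, extending its action to $\HS$ is, via the Morita equivalence, the same as extending the $W\ltimes\dd(T)$-action on $M:=(W\ltimes\dd(T))\pe\o_{\dd(T)^W}L$ to an $\HH$-action. By part~(i) (at $\hb=1$) this is possible iff $\C[\t^*]\o_{\C[\t^*]^W}M^W\to M$ is an isomorphism. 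Finally I would identify $M^W=\pe M\cong L$, identify $M\cong \dd(T)\o_{\dd(T)^W}L$ (using $(W\ltimes\dd(T))\pe\cong\dd(T)$ as a right $\dd(T)^W$-module, which follows from $\dd(T)$ being free over $\dd(T)^W$ in the appropriate sense, cf.\ Theorem~\ref{HDD} and the freeness of $\sym\t$ over $(\sym\t)^W$), and identify $\C[\t^*]\o_{\C[\t^*]^W}M^W$ with $\sym\t\o_{(\sym\t)^W}L$; the map then becomes precisely $\sym\t\o_{(\sym\t)^W}L\to\dd(T)\o_{\dd(T)^W}L$ as claimed. The main obstacle I anticipate is the bookkeeping in part~(i): making precise that adding the single affine Demazure element $\th_{\al_0}$ to a $W\ltimes\dh(T)$-module is governed entirely by the \emph{finite} nil Hecke criterion, i.e.\ checking that the affine braid relations and the relations \eqref{def-rel} involving $\th_{\al_0}$ and the lattice part automatically hold once the finite-$W$ condition $\C[\t^*]\o_{\C[\t^*]^W}M^W\iso M$ is imposed — this requires carefully exploiting the faithful embedding \eqref{ncloc} of $\HH$ into $W\ltimes\op{S}\inv\dh(T)$ and transporting identities from that overring to $M$ via the faithful module $\C[\t^*][\hb]$.
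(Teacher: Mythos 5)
There is a genuine gap in your treatment of part (i); the reduction of (ii) to (i) via Morita equivalence of $W\ltimes \dd(T)$ and $\dd(T)^W$ is fine and is exactly what the paper does. In (i), Lemma \ref{morita} (applied to $\h=\t\aff$ with the finite group $W$) only produces the action of the \emph{finite} nil Hecke algebra $\nil(\t\aff,W)$ on $M$; it gives you no operator $\th_{\al_0}$ at all, so "existence" is not yet addressed. You never say how $\th_{\al_0}$ is to act on $M$ -- this is where the actual construction lies: the paper chooses $\mu\in\BX^*$ with $\langle\mu,\check\delta\rangle=1$ and, guided by the identity \eqref{th-0} in $\nil(\t\aff,\WW)$, \emph{defines} the action of $\th_{\al_0}$ on $M$ as the operator $e^\mu\ccirc\th_\delta\ccirc e^{-\mu}$, combining the finite nil Hecke action with the $\C[T]$-action coming from $\dh(T)$.

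More seriously, your proposed verification of the relations -- "any relation that holds in $\WW\ltimes\Q(\t\aff)$ and is applied to the faithful module $\C[\t^*][\hb]$ transports to $M$" -- does not work. Faithfulness of the polynomial representation constrains the abstract algebra $\nil(\t\aff,\WW)$, but $M$ is not a module over the localization $W\ltimes\op{S}\inv\cdot\dh(T)$, and the operators $\th_\al$ on $M$ are not given by elements of $W\ltimes\dh(T)$ acting on $M$ (they exist only thanks to the decomposition $M\cong\sym\t\o_{(\sym\t)^W}M^W$). So to show one has an $\nil(\t\aff,\WW)$-module you must check the defining relations \eqref{braid} and \eqref{def-rel} for these newly defined operators on $M$ directly. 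The quadratic relation $(\th_{\al_0})^2=0$ and the commutation relation \eqref{def-rel} for $\al_0$ can be checked by hand from \eqref{ddh} and \eqref{th-0}, but the affine braid relations between $\th_{\al_0}$ and the finite Demazure operators are precisely the nontrivial point: the paper does not verify them directly but invokes Lonergan's theorem \cite{Lo2}, namely that the canonical surjection ${\mathcal I}(\t\aff)\onto\nil(\t\aff,W\aff)$ is an isomorphism (so the braid relations are consequences of the relations already checked), extended to the infinite Coxeter case by noting that the rank-two braid relations with $m_{\al,\be}=\infty$ are vacuous. This external input has no substitute in your argument, and the paper's footnote and acknowledgments make clear that exactly this step (the construction of $\th_{\al_0}$ and the braid relations) is where the original proof contained mistakes. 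A secondary, smaller issue: your opening characterization of $\nil(\t\aff,\wt W)$ inside $\WW\ltimes\Q(\t\aff)$ as the stabilizer of $\C[\t^*][\hb]$ is asserted but not established (Theorem \ref{ku} is stated for the Coxeter group, not the extended group $\wt W$), though in the end it plays no real role in a correct existence argument.
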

\begin{proof} 
The simple reflection $s_{\al_0}:\t\aff\to$ is a reflection with respect to the hyperplane
$\delta(x)=\hb$, where  $\delta\in  R$ is  the highest root. The corresponding Demazure operator 
acts on $\C[\t^*][\hb]$ as follows:
\beq{al0} (\th_{\al_0}f)(x,\hb)= \mbox{$\frac{f(s_{\al_0}(x,\hb))-f(x,\hb)}{\langle x,\check\delta\rangle-\hb}$}=
\mbox{$\frac{f(x-(\langle x,\check\delta\rangle-\hb)\delta)-f(x,\hb)}{\langle x,\check\delta\rangle-\hb}$}.
\eeq

For any weight $\mu\in\BX^*$, we compute
\begin{align}
(e^\mu\ccirc \th_{\delta}\ccirc e^{-\mu} f)(x,\hb)&=(e^\mu\ccirc \th_{\delta}f)(x+\hb\mu,\hb)=
e^\mu \Big(\frac{f(x-\langle x,\check\delta\rangle\delta+\hb\mu,\,\hb)- f(x+\hb\mu,\hb)}{\langle x,\check\delta\rangle}\Big)
\nonumber\\
&=
\frac{f(x-\hb\mu-\langle x-\hb\mu,\check\delta\rangle\delta+\hb\mu,\,\hb)- f(x-\hb\mu+\hb\mu,\,\hb)}{\langle x-
\hb\mu,\check\delta\rangle}\label{th-0}\\
&=
\frac{f(x-(\langle x,\check\delta\rangle-\hb\langle \mu,\check\delta\rangle)\delta)- 
f(x,\,\hb)}{\langle x,\check\delta\rangle-\hb\langle \mu,\check\delta\rangle}.
\nonumber
\end{align}

Now let $M$ be as in (i). By Lemma \ref{morita}, the action of $W\ltimes \sym\t$ on $M$ can be extended
to an action of the algebra $\nil(\t,W)$. Next, we use the action
of the subalgebra $\C[T]\sset \dh(T)$ on $M$.
Specifically, since any root is $W$-conjugate to a simple root, there exists a weight $\mu\in\BX^*$ such that we have
$\langle \mu,\check\delta\rangle=1$.
For such a $\mu$, formula \eqref{th-0} shows that in $\nil(\t\aff,\WW)$,
one has $\th_{\al_0}=e^\mu\ccirc \th_{\delta}\ccirc e^{-\mu}$. Accordingly, we  let 
$\th_{\al_0}$ act on $M$ by  the operator $e^\mu\ccirc \th_{\delta}\ccirc e^{-\mu}$.\footnote{This correction of an original, incorrect,
construction of $\th_{\al_0}$ was suggested to me
by Gus Lonergan.} We claim that    the action of $\th_{\al_0}$
 thus defined and  the actions  of $\nil(\t,W)$ and $\C[T]$ on $M$  combine  together to 
give $M$ the structure of an $\nil(\t\aff,\WW)$-module. Thus, we must check that the relations
\eqref{braid} and  \eqref{def-rel} hold. The relations which do not involve $\th_{\al_0}$ hold by construction.
The equation $(\th_{\al_0})^2=0$ is clear. Also, using \eqref{ddh} and \eqref{th-0}
it is straightforward to check that   \eqref{def-rel} holds for $\al=\al_0$.

To prove the braid relations we use \cite{Lo2}. Specifically, let  ${\mathcal I}(\t\aff)$ be the algebra
associated, as   in  \cite[ Section   2.7]{Lo2}, with the  Coxeter group $\Gamma:=W\aff$ and  its reflection representation
$\h:=\t\aff$. 
The relations we have checked so far say that
$M$  has the structure of an ${\mathcal I}(\t\aff)$-module. The
algebra $\nil(\t\aff,W\aff)$ is a quotient of  ${\mathcal I}(\t\aff)$ by a two-sided
ideal. Furthermore,
the main result of   \cite{Lo2} is essentially  equivalent,  see \cite[ Section   2.7]{Lo2},
 to the statement that the canonical  map ${\mathcal I}(\h)\onto \nil(\h,\Gamma)$ is, in fact, an isomorphism,
which is what we want. We remark that in   \cite{Lo2}
the Coxeter  group  is assumed  to be finite. The braid relation
$(\th_\al\th_\be)^{m_{\al,\be}}=(\th_\be\th_\al)^{m_{\al,\be}}$ was proved in \cite[ Section  3]{Lo2} under that 
assumption. However, in the case of an infinite Coxeter group
with two generators,  $\bt_\al,\bt_\be$, the element $\bt_\al\bt_\be$ has an infinite order,
i.e., one has  $m_{\al,\be}=\infty$. Therefore, the corresponding braid relation  is, in that case, vacuous. This proves that
the map ${\mathcal I}(\h)\onto \nil(\h,\Gamma)$ is  an isomorphism
for any, not necessarily finite, Coxeter group $\Gamma$.

Thus, we have extended the action of $W\ltimes\dh(T)$ on $M$ to an $\nil(\t\aff\WW)$-action.
Note that if   such an extension exists,  then it is unique since each of the operators
$\th_\al,\ \al\in\Ups\aff$, is defined uniquely due to the Morita equivalence \eqref{gkv} in the special
case of the rank-one nil Hecke algebra associated with the root system $\{\al,-\al\}$.
Part  (i) of Proposition \ref{hh-morita} follows.

Combining part (i) for the specialization at $\hb=1$  
with Morita equivalence of $W\ltimes \dd(T)$ and $\dd(T)^W$ yields part (ii).
\end{proof}

By definition, one has $\nil(\t\aff,\wt W)|_{\hb=0}=\gr\HH$, resp. $
\pe\nil(\t\aff,\wt W)\pe|_{\hb=0}=\gr\HS$.
Note that the last statement of the  proposition below implies  Proposition \ref{bfm-loc}.

\begin{prop}\label{hh-class} 
There is an algebra isomorphism
$$\gr \HH\ccong W\ltimes\C[T\times \t^*, \frac{t^\al-1}{\calph},\ \al\in R], 
\quad{\textrm{resp.}}\quad\gr\HS\ccong\C[T\times \t^*, \frac{t^\al-1}{\calph},\ \al\in R]^W.$$
Furthermore, we have $(\gr\HH)\pe\cong \C[T\times \t^*, \frac{t^\al-1}{\calph},\, \al\in R]$, and each of the above objects  is flat over ~$\C[\t^*]^W$.
\end{prop}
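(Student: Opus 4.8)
The plan is to establish Proposition \ref{hh-class} in three stages: first identify the specialization $\nil(\t\aff,\wt W)|_{\hb=0}$ explicitly as a localized polynomial ring crossed with $W$, then deduce the spherical version by applying the idempotent $\pe$, and finally extract flatness over $\C[\t^*]^W$ from the module structure.

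\begin{proof}[Proof sketch]
First I would compute $\gr\HH = \nil(\t\aff,\wt W)|_{\hb=0}$ directly. Setting $\hb=0$ in the commutation relation \eqref{ddh} makes the generators $e^\mu$ and $\xi\in\t$ commute, so the image of $W\ltimes\dh(T)|_{\hb=0}=W\ltimes\C[\T^*T]=W\ltimes\C[T\times\t^*]$ lands inside $\gr\HH$ via \eqref{ncloc}. The extra generators are the images of the Demazure elements $\th_\al,\ \al\in\Ups\aff$. Using the explicit formulas \eqref{al0} and \eqref{th-0} specialized at $\hb=0$, one sees that $\th_\al$ acts on $\C[T\times\t^*]$ by $f\mapsto \frac{s_\al(f)-f}{\calph}$ for $\al\in\Ups$, and $\th_{\al_0}$ contributes (after conjugation by a suitable $e^\mu$) the operator involving division by $\langle x,\check\delta\rangle$ composed with the reflection $s_\delta$ acting on the $T$-variable as $t\mapsto t\cdot$ (a function of $t^\delta$). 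The key point is that the subalgebra of $W\ltimes\Q(\t\aff)|_{\hb=0}$ generated by $W\ltimes\C[T\times\t^*]$ together with all these $\th_\al$ is precisely $W\ltimes\C[T\times\t^*,\frac{t^\al-1}{\calph},\al\in R]$: the elements $s_\al\th_\al$ produce the ``shifted'' functions, and conversely the faithfulness statement of Theorem \ref{ku} (applied to $\nil(\t\aff,\wt W)$, for which the analogue holds) pins down $\gr\HH$ inside $W\ltimes\Q(\t\aff)|_{\hb=0}$ as the set of operators preserving $\C[\t^*\aff]|_{\hb=0}$, which matches the claimed algebra. This gives the first isomorphism; the ``resp.'' spherical version follows by multiplying by $\pe$ on both sides and using that $\pe(W\ltimes B)\pe=B^W$ for any $W$-algebra $B$.

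Next, for the statement $(\gr\HH)\pe\cong \C[T\times\t^*,\frac{t^\al-1}{\calph},\al\in R]$, I would use Lemma \ref{morita-prop} in the specialization $\hb=0$: the elements $h_i',h_i''\in\nil(\t,W)$ with $1=\sum_i h_i'\pe h_i''$ still work, so $\nil_0=\nil(\t\aff,\wt W)|_{\hb=0}$ and $\pe\nil_0\pe$ are Morita equivalent via the bimodule $\nil_0\pe$. Under the identification $\gr\HH\cong W\ltimes\C[T\times\t^*,\frac{t^\al-1}{\calph}]$, the right multiplication action of $\C W$ realizes $\C[T\times\t^*,\frac{t^\al-1}{\calph}]$ (the ``regular representation'' summand) as $(\gr\HH)\pe$, exactly parallel to the standard fact that $(W\ltimes B)\pe\cong B$ as a left $W\ltimes B$-module.

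Finally, flatness over $\C[\t^*]^W=(\sym\t)^W$: here I would apply Proposition \ref{hh-morita}(i) at $\hb=0$ together with Lemma \ref{afilt}. The module $\gr\HH$, viewed as a left module over itself (or rather the bimodule $(\gr\HH)\pe$), satisfies the criterion of Proposition \ref{hh-morita}(i), namely the natural map $\C[\t^*]\o_{\C[\t^*]^W}((\gr\HH)\pe)^W\to(\gr\HH)\pe$ is an isomorphism; since $\C[\t^*]$ is free hence flat over $\C[\t^*]^W$, flatness of $(\gr\HH)\pe$ over $\C[\t^*]^W$ is equivalent to flatness of its $W$-invariants, i.e. of $\gr\HS=\C[T\times\t^*,\frac{t^\al-1}{\calph}]^W$, over $\C[\t^*]^W$. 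To get the latter I would invoke Lemma \ref{afilt} applied to $A=\HH$ (equivalently, use that $\HS=\hsph$ is a Hamiltonian reduction whose associated graded we have now identified), or argue directly: $\C[T\times\t^*,\frac{t^\al-1}{\calph}]$ is a free $\C[\t^*]$-module (it is a localization-type extension that is torsion-free and flat over the regular ring $\C[\t^*]$, the flatness already recorded for the affine blow-up construction), hence flat over $\C[\t^*]^W$, and then Morita transfer (the $W$-invariants functor is exact on the relevant category since it is a direct summand, $\pe$ being an idempotent) preserves flatness over the central subalgebra $\C[\t^*]^W$.

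\textbf{The main obstacle} I expect is the very first stage: carefully verifying that the subalgebra of $W\ltimes\Q(\t\aff)|_{\hb=0}$ generated by $W\ltimes\C[T\times\t^*]$ and the specialized Demazure operators is exactly the localized ring $\C[T\times\t^*,\frac{t^\al-1}{\calph},\al\in R]$ crossed with $W$ — in particular that one does \emph{not} pick up more functions, and that the affine simple root $\al_0$ produces the $t^\al$-type denominators rather than $\calph$-type ones only. This requires keeping track of the $\hb\to 0$ degeneration of \eqref{th-0} and matching it with the description of $\fZ$ via affine blow-ups in Section \ref{3d}; the cleanest route is probably to cite Theorem \ref{bfm} (the \cite{BFM} computation) for the identification of the target algebra and then check that the generators on both sides correspond, rather than re-deriving the blow-up structure from scratch.
\end{proof}
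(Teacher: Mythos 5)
The central step of your first stage does not go through: you propose to pin down $\gr\HH$ inside a crossed product with rational functions as ``the set of operators preserving $\C[\t^*\aff]|_{\hb=0}$,'' by an analogue of Theorem \ref{ku}. The Kumar characterization rests on the polynomial representation being faithful, and this is exactly what breaks at $\hb=0$: by $(e^\mu f)(x,\hb)=f(x-\hb\mu,\hb)$, every translation $e^\mu$ acts as the identity on $\C[\t^*]=\C[\t^*\aff]|_{\hb=0}$, so the action of $\nil(\t\aff,\wt W)|_{\hb=0}$ on $\C[\t^*]$ kills the entire $T$-direction and cannot detect the subalgebra $\C[T]$ at all (also, literally, one cannot specialize $W\ltimes\C(\t\aff^*)$ at $\hb=0$, since $\hb$ is invertible in the fraction field). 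The paper avoids this by going in the opposite direction: it writes down the explicit assignment $\bar\th_\al\mto\frac{1}{\calph}(s_\al-1)$, $\bar\th_{\al_0}\mto\frac{1}{\check\delta}(s_\delta-1)$, checks from the relations \eqref{def-rel} that this defines a homomorphism $\Phi:\nil(\t\aff,\wt W)|_{\hb=0}\to W\ltimes\C[T\times\t^*,\frac{t^\al-1}{\calph},\al\in R]$, proves surjectivity from the conjugation identity \eqref{Phi2} (your ``shifted functions'' remark is this computation), and proves injectivity by noting that after applying $\C(\t^*)\o_{\C[\t^*]}(-)$ the map becomes an isomorphism, so $\Ker\Phi$ is $\sym\t$-torsion, while $\nil(\t\aff,\wt W)|_{\hb=0}$ is a free $\C[\t^*]$-module on the basis $\bar\th_w$, hence torsion free, forcing $\Ker\Phi=0$. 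Your proposal contains no substitute for this injectivity argument, and citing Theorem \ref{bfm} does not supply one: that theorem identifies the target with $\C[\fZ\times_\fc\t^*]$ but says nothing about $\gr\HH$.

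The flatness part also has problems. Torsion-freeness over $\C[\t^*]$ does not give flatness, and the flatness ``already recorded for the affine blow-up construction'' is precisely Proposition \ref{bfm-loc}, which the present proposition is meant to reprove (and which anyway concerns the $W$-invariants over $\C[\t^*]^W$, not the full algebra over $\C[\t^*]$), so that route defeats the purpose; appealing to Lemma \ref{afilt} ``with $A=\HH$,'' or to $\HS$ being a Hamiltonian reduction, is genuinely circular, since the identification $\HS\cong\hpp$ (Theorem \ref{main}) is proved later and uses the present proposition through \eqref{chain1}. In the paper the flatness costs nothing extra: once $\Phi$ is an isomorphism, $\gr\HH$ is free over $\C[\t^*]$ with basis $\bar\th_w$ and $\C[\t^*]$ is free over $\C[\t^*]^W$, so the target is free over $\C[\t^*]^W$; and $\gr\HS$ is a $\C[\t^*]^W$-module direct summand of $\gr\HH$, hence flat. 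Your direct-summand idea for the spherical statement is the same as the paper's; the gap is upstream, in establishing the isomorphism and the ambient freeness without circular input.
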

\begin{proof}
It is clear from \eqref{ddh} that the elements $e^\mu\in \nil(\t\aff,\wt W)\ho,\ \mu\in \BX^*$, and $\xi\in\t$, generate 
a copy of the algebra $\C[\T^*T]$ inside $\nil(\t\aff,\wt W)\ho$.
For $w\in \WW$, let $\bar\th_w$ denote the image of the element $\th_w\in \nil(\t\aff,\wt W)$ in $\nil(\t\aff,\wt W)\ho$.
Then, the commutation relations \eqref{def-rel} imply easily that the assignment
$$\bar\th_\al\ \mto\ \mbox{$\frac{1}{\calph}$}(s_\al-1),\quad  \bar\th_{\al_0}\
\mto\ \mbox{$\frac{1}{\check\delta}$}(s_\delta-1),\quad\al\in \Ups,$$
extends by multiplicativity to a well-defined algebra homomorphism
\beq{Phi} \Phi:\ \gr\HH=\nil(\t\aff,\wt W)|_{\hb=0}\too W\ltimes\C[T\times \t^*, \mbox{$\frac{t^\al-1}{\calph}$},\ \al\in R].
\eeq
 
For any $\mu\in\BX^*$,
using that
$t^\mu  s_\al t^{-\mu}s_\al =t^{\langle \mu,\calph\rangle\al}$, we find that
\beq{Phi2}t^\mu  \mbox{$\frac{1}{\calph}$}(s_\al-1)t^{-\mu}s_\al +\mbox{$\frac{1}{\calph}$}(s_\al-1)=\mbox{$\frac{1}{\calph}$}(t^{\langle \mu,\calph\rangle\al}-s_\al)
+\mbox{$\frac{1}{\calph}$}(s_\al-1)=
\mbox{$\frac{t^{\langle \mu,\calph\rangle\al}-1}{\calph}$}.
\eeq
Letting $\mu$ run through the set of fundamental weights, we see that the image of the map $\Phi$
contains all elements $\frac{t^\al-1}{\calph},\ \al\in\Sigma$.
The  algebra on the right of \eqref{Phi} is  generated by these elements
and its subalgebra $W\ltimes\C[T\times \t^*]$.
It follows that the map $\Phi$  is surjective.
Also, it is clear that applying $\C(\t^*)\o_{\C[\t^*]}(-)$, a localization functor,
to the map $\Phi$ yields an isomorphism between the corresponding localized $\C(\t^*)$-modules.
Hence, $\Ker \Phi$ is a torsion $\sym\t$-module.

Further, the algebra $\nil(\t\aff,\wt W)$ is a free $\C[\t\aff^*]$-module with basis $\th_w,\ w\in \WW$, by definition.
It follows that $\nil(\t\aff,\wt W)\ho$ is  a free $\C[\t^*]$-module with basis $\bar\th_w,\ w\in \WW$,
in particular, this module is torsion free. Therefore, we have $\Ker \Phi=0$, so the map $\Phi$ is an isomorphism.
Since $\C[\t^*]$ is  free over $\C[\t^*]^W$ and $\nil(\t\aff,\wt W)\ho$ is free over $\C[\t^*]$, we deduce
that $W\ltimes\C[T\times \t^*, \frac{t^\al-1}{\calph},\ \al\in R]$ is a free $\C[\t^*]^W$-module.

Finally, the map $\Phi$ being an isomorphism, it follows that its restriction to the spherical subalgebra
yields an isomorphism $\gr\HS\ho\iso \C[T\times \t^*, \frac{t^\al-1}{\calph},\ \al\in R]^W$.
Viewed as a $\C[\t^*]^W$-module,  the algebra  $\gr\HS$ is a direct summand
of $\gr\HH$, hence it is flat over  $\C[\t^*]^W$. This implies that $\C[T\times \t^*, \frac{t^\al-1}{\calph},\ \al\in R]^W$
is a flat  $\C[\t^*]^W$-module, completing the proof.
\end{proof}

  \section{Spherical degenerate nil DAHA via Hamiltonian reduction}\label{pf-sec}

\subsection{The action of $\HH$ on the Miura bimodule $\BM$} The following result provides the link between nil Hecke algebras and Hamiltonian reduction.

\begin{prop}\label{act} The left action of $W\ltimes\dd(T)$ on the Miura bimodule $\BM$
 can be extended to
an $\HH$-action,  making $\BM$ an $(\HH,\hpp)$-bimodule.
\end{prop}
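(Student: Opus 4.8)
The plan is to invoke the criterion of Proposition \ref{hh-morita}(i), applied to the specialization at $\hb=1$. By that proposition, the left $W\ltimes\dd(T)$-module structure on $\BM$ extends (uniquely) to an $\HH$-action precisely when the natural multiplication map
\[
\sym\t\,\o_{(\sym\t)^W}\,\BM^W\ \too\ \BM
\]
is an isomorphism. Since $\BM$ is a $(\dd(T),\hpp)$-bimodule and the prospective $\HH$-action commutes with the right $\hpp$-action by construction (the new operators $\th_\al$ are built from the $W\ltimes\dd(T)$-action, which already commutes with $\hpp$ acting on the right), establishing this isomorphism is the whole content of the proposition.

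So the key step is to compute $\BM^W$ and verify the displayed map is an isomorphism. Here I would use the explicit description of $\BM$ obtained just before the statement: by Proposition \ref{a-lem}(ii) and Lemma \ref{all} there is a canonical isomorphism of $(\dd(T),\hpp)$-bimodules
\[
\BM\ \cong\ \sym\t\,\o_{Z\g}\,\hpp,
\]
where on the left the $\sym\t$-factor carries the dot-action of $W$, and $Z\g\hookrightarrow\hpp$ is the map induced by $G$-biinvariant differential operators, identified with $(\sym\t)^{W\cdot}$ via Harish-Chandra. The $W$-action on $\BM$ is $w(a\o h)=(w\cdot a)\o h$. Taking $W$-invariants, and using that $\hpp$ is faithfully flat (in fact free, since $\sym\t$ is free over $Z\g$ by Kostant) over $Z\g=(\sym\t)^W$, one gets
\[
\BM^W\ =\ \big(\sym\t\,\o_{(\sym\t)^W}\,\hpp\big)^W\ =\ (\sym\t)^W\,\o_{(\sym\t)^W}\,\hpp\ =\ \hpp,
\]
the middle equality because $W$-invariants of $\sym\t\o_{(\sym\t)^W}\hpp$ along the $W$-action on the left tensor factor is just $(\sym\t)^W\o_{(\sym\t)^W}\hpp$ (invariants commute with the flat base change $-\o_{(\sym\t)^W}\hpp$). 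Then the map $\sym\t\o_{(\sym\t)^W}\BM^W\to\BM$ becomes the identity map $\sym\t\o_{(\sym\t)^W}\hpp\to\sym\t\o_{Z\g}\hpp$, which is an isomorphism since $Z\g=(\sym\t)^W$ under Harish-Chandra. Hence Proposition \ref{hh-morita}(i) applies and the $\HH$-action exists and is unique.

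Finally I would record that the resulting $\HH$-action is $\hpp$-linear on the right: uniqueness in Proposition \ref{hh-morita}(i) pins down the action of each Demazure element $\th_\al$ as an operator on $\BM$ determined by the $W\ltimes\dd(T)$-action (concretely $\th_\al$ acts through $\tfrac{1}{\check\al}(s_\al-1)$ in the rank-one piece, and $\th_{\al_0}$ through the conjugate $e^\mu\ccirc\th_\delta\ccirc e^{-\mu}$ as in the proof of that proposition), and each of those operators commutes with right multiplication by $\hpp$ because the left $W\ltimes\dd(T)$-action does and $\sym\t$ (hence $(\sym\t)^W$) sits centrally enough — more precisely, the right $\hpp$-module structure on $\BM=\sym\t\o_{Z\g}\hpp$ is by right multiplication on the second factor, which manifestly commutes with any operator built from the left $W\ltimes\dd(T)$-action on the first factor. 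Therefore $\BM$ is an $(\HH,\hpp)$-bimodule, as claimed. The only mildly delicate point — and the one I would be most careful about — is the interchange of $W$-invariants with the base change $-\o_{(\sym\t)^W}\hpp$; this is legitimate because $W$ is finite and we work over a field of characteristic zero (so taking $W$-invariants is exact, equivalently given by the idempotent $\pe$), making it commute with arbitrary base change.
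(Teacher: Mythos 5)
Your proposal is correct and follows essentially the same route as the paper: the paper's proof is exactly the observation that $\BM^W=\hpp$, so that $\sym\t\,\o_{(\sym\t)^W}\BM^W\to\BM=\sym\t\,\o_{(\sym\t)^W}\hpp$ is an isomorphism, and then Proposition \ref{hh-morita} applies. Your additional remarks (the interchange of $W$-invariants with base change via the idempotent $\pe$, and the right $\hpp$-linearity of the extended action coming from uniqueness) are correct elaborations of points the paper leaves implicit.
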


\begin{proof} This follows from Proposition \ref{hh-morita} since $\BM^W=\hpp$, so the map
$\sym\t\,\o_{_{(\sym\t)^W}}\BM^W\to \BM=\sym\t\,\o_{_{(\sym\t)^W}}\,\hpp$
is an isomorphism. 
\end{proof}

Recall that the algebra $\hpp$ comes equipped with the Kazhdan filtration.
We equip  $\sym\t$ with the natural filtration induced by the grading and
equip $\BM=\sym\t\,\o_{_{\sh^W}}\,\hpp$ with a tensor product filtration.
Let, $\BM_\hb$ be an associated Rees module.
It is straightforward to see from the construction that the $(\HH,\hpp)$-bimodule
structure on $\BM$ is compatible with the filtrations. Therefore, 
the left $\HH$-action on the Miura bimodule $\BM$ can be lifted
to an $\HH_\hb$-action on $\BM_\hb$. Thus, $\BM_\hb$ acquires
the structure of a $\Z$-graded $(\HH_\hb,\hpp_\hb)$-bimodule.
We will identify $\BM_\hb$ with $(\sym\t)[\hb]\,\o_{_{(\sym\t)^W[\hb]}}\,\hpp_\hb$,
resp. $\BM_\hb^W$ with $\hpp_\hb$.
Let $1_\BM=1\o 1\in (\sym\t)[\hb]\,\o_{_{(\sym\t)^W[\hb]}}\,\hpp_\hb$ be the generator.
Recall that $\HH_\hb=\nil(\t\aff,\WW)$.

The following theorem and its corollary are an extended version of Theorem \ref{main-intro} from the introduction.

\begin{thm}\label{main} \vi The map $\nil(\t\aff,\WW)\to\BM_\hb,\ u\mto u(1\o1)$, induces an isomorphism
$\nil(\t\aff,\WW)\pe\iso\BM_\hb$, of  graded left  $\nil(\t\aff,\WW)$-modules.

\vii The map $\pe\nil(\t\aff,\WW)\pe\to \BM_\hb^W=\hpp_\hb,\ \pe u\pe\mto \pe u(1\o1)$, yields a graded  algebra isomorphism
\[\pe\nil(\t\aff,\WW)\pe\ \iso\  \End_{\hpp ^{op}_\hb}\hpp_\hb=\hpp _\hb,\]  such that
the following diagram commutes:
$$
\xymatrix{
(\sym\t)^W[\hb]\ar[d]\ar@{=}[r]&Z_\hb\g\ar[d]\\
\pe\nil(\t\aff,\WW)\pe\ar[r]&\hpp_\hb.}
$$
\end{thm}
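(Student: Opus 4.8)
The plan is to deduce Theorem \ref{main} from the Morita-theoretic setup already assembled, with Theorem \ref{ku} (equivalently Lemma \ref{morita-prop}) doing the real work, and then to upgrade everything to the filtered/Rees level so that the claimed diagram with $Z_\hb\g$ commutes. First I would establish part (i). By Proposition \ref{act}, the left $W\ltimes\dh(T)$-action on $\BM_\hb$ extends to a $\nil(\t\aff,\WW)$-action, and by construction $\BM_\hb=(\sym\t)[\hb]\otimes_{(\sym\t)^W[\hb]}\hpp_\hb$ with $\BM_\hb^W=\hpp_\hb$. Thus $\BM_\hb$ is a graded left $\nil(\t\aff,\WW)$-module generated by $1_\BM$ with $(\sym\t)^W[\hb]$-module of $W$-invariants equal to $\hpp_\hb$. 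The natural map $u\mapsto u(1\otimes 1)$ factors through $\nil(\t\aff,\WW)\pe$ because $\pe$ acts on $\BM_\hb$ as the projector onto $\BM_\hb^W$ and $1_\BM$ is $W$-invariant, so $u\pe(1_\BM)=u(1_\BM)$; conversely any $u$ annihilating $1_\BM$ already satisfies $u\pe(1_\BM)=0$. To see $\nil(\t\aff,\WW)\pe\iso\BM_\hb$ is an isomorphism, I would use Lemma \ref{morita-prop}: the algebras $\nil(\t\aff,\WW)$ and $\pe\nil(\t\aff,\WW)\pe$ are Morita equivalent via the bimodule $\nil(\t\aff,\WW)\pe$, hence $\nil(\t\aff,\WW)\pe\otimes_{\pe\nil(\t\aff,\WW)\pe}(-)$ is an equivalence, and applying it to $\hpp_\hb=\BM_\hb^W$ — which is a $\pe\nil(\t\aff,\WW)\pe$-module via part (ii) in the making — reproduces $\BM_\hb$; this identification is exactly the map $u\mapsto u(1_\BM)$. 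Grading-compatibility is clear since $1_\BM$ sits in degree $0$ and all maps are graded.

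Next, part (ii). Taking $W$-invariants (equivalently, multiplying by $\pe$ on the left) in the isomorphism of part (i) yields $\pe\nil(\t\aff,\WW)\pe\iso \pe\BM_\hb=\BM_\hb^W=\hpp_\hb$, and one checks this is a ring map: the product $\pe u\pe\cdot\pe v\pe$ acts on $1_\BM$ as $\pe u\pe(\pe v\pe(1_\BM))$, and since $\pe v\pe(1_\BM)\in\hpp_\hb=\BM_\hb^W$ corresponds to some $h\in\hpp_\hb$ acting on the right by $1_\BM\mapsto 1_\BM h$, the two $\hpp_\hb$-actions (left-via-$\nil$ and right-via-$\hpp_\hb$) on the cyclic module $\hpp_\hb\cdot 1_\BM$ commute, giving multiplicativity. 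The description $\pe\nil(\t\aff,\WW)\pe\iso\End_{\hpp_\hb^{op}}\hpp_\hb=\hpp_\hb$ then comes from the fact that $\hpp_\hb$, as a right module over itself, is cyclic with endomorphism ring $\hpp_\hb$ acting by left multiplication, and the $\pe\nil(\t\aff,\WW)\pe$-action realizes precisely these endomorphisms by part (i).

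For the commuting diagram, I would trace where $(\sym\t)^W[\hb]$ goes. On the left-hand vertical, $(\sym\t)^W[\hb]=\pe(\sym\t)[\hb]\pe$ sits inside $\pe\nil(\t\aff,\WW)\pe$ via the embedding $W\ltimes\dh(T)\hookrightarrow\HH_\hb$ of \eqref{ncloc}; its image acts on $1_\BM=1\otimes 1$ in $(\sym\t)[\hb]\otimes_{(\sym\t)^W[\hb]}\hpp_\hb$ by multiplication on the left tensor factor, which by the balancing relation equals multiplication on the $\hpp_\hb$-factor by the image of $(\sym\t)^W[\hb]$ under $(\sym\t)^W[\hb]\to\hpp_\hb$. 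Now by Theorem \ref{main-intro} / the Harish-Chandra identification recalled in Section \ref{GG}, the composite $Z\g\hookrightarrow\ug\to\dd(G)\to\hpp$ is identified with $(\sym\t)^W\into\HS$, and on Rees algebras this reads $Z_\hb\g=(\sym\t)^W[\hb]\subset\hpp_\hb$; the identification $(\sym\t)^W[\hb]=Z_\hb\g$ is exactly the top edge of the diagram. The right vertical is the canonical inclusion $Z_\hb\g\into\hpp_\hb$ of Section \ref{kazh}. Commutativity is then the statement that these two routes produce the same element of $\hpp_\hb$, which follows from the explicit description of the left $(\sym\t)^W[\hb]$-action on $1_\BM$ just given.

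The main obstacle I expect is not the Morita bookkeeping but verifying that the $\nil(\t\aff,\WW)$-action constructed in Proposition \ref{act}/\ref{hh-morita} is genuinely compatible with the Kazhdan/order filtrations, so that it lifts to an honest graded $\HH_\hb$-action on $\BM_\hb$ rather than merely a filtered one — in particular that the localized Demazure operator $e^\mu\circ\th_\delta\circ e^{-\mu}$ (whose construction, via \cite{Lo2}, fixes the subtlety with $\th_{\al_0}$) preserves the tensor-product filtration on $\sym\t\otimes_{(\sym\t)^W}\hpp$ with the right degrees. Once that filtered-to-graded compatibility is secured, everything else is a formal consequence of Lemma \ref{morita-prop}, the identification $\BM_\hb^W=\hpp_\hb$, and the Harish-Chandra normalization from Section \ref{GG}.
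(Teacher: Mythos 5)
Your argument is circular at its crucial step, so there is a genuine gap. Lemma \ref{morita-prop} does give, for the module $\BM_\hb$, a natural isomorphism $\nil(\t\aff,\WW)\pe\,\o_{\pe\nil(\t\aff,\WW)\pe}\,\BM_\hb^W\iso\BM_\hb$; but to conclude from this that $\nil(\t\aff,\WW)\pe\iso\BM_\hb$ you would need to know that $\BM_\hb^W=\hpp_\hb$ is a free rank-one $\pe\nil(\t\aff,\WW)\pe$-module generated by $1_{\hpp_\hb}$, i.e.\ that the action map $\tau:\pe\nil(\t\aff,\WW)\pe\to\hpp_\hb$, $\pe u\pe\mto\pe u(1\o1)$, is bijective --- which is precisely part (ii). You then obtain part (ii) by applying $\pe$ to part (i) (your ``part (ii) in the making''), so (i) and (ii) each rest on the other and the bijectivity of $\tau$ is never established. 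The things you do verify (factoring through $\nil(\t\aff,\WW)\pe$, multiplicativity on invariants, and the Harish-Chandra normalization giving the commutative square via the balancing relation in $(\sym\t)[\hb]\o_{(\sym\t)^W[\hb]}\hpp_\hb$) are correct but are the easy bookkeeping; and the ``main obstacle'' you flag, filtered compatibility of the $\HH$-action, is handled in the paragraph preceding the theorem and is not where the difficulty lies.

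The real content is the bijectivity of $\tau$, and the paper proves it directly. Injectivity uses that $\hpp_\hb$ is torsion free over $\sh^W=Z_\hb\g$, which requires flatness of $\fZ$ over $\fc$ together with separatedness of the Kazhdan filtration (Corollary \ref{noeth}). Surjectivity is the hard part: one first proves Lemma \ref{hn}, the decomposition $\hph=Z_\hb\g\oplus\ks$, whose proof combines the chain of isomorphisms coming from Proposition \ref{hh-class}, Theorem \ref{bfm} and Theorem \ref{class-intro} at $\hb=0$ with a delicate $\hb$-adic closure argument forced by the fact that the gradings are not bounded below; using this, the $\hph$-action on $\C(\t\aff^*)^W$ obtained by transport through $\tau$ is shown to be faithful and to preserve the subspace $\sh^W$, and only then does Theorem \ref{ku} (Kumar's characterization of the nil Hecke algebra as exactly the operators preserving the polynomial subring) yield $\hph\subseteq\tau(\pe\nil(\t\aff,\WW)\pe)$. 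None of these ingredients --- Lemma \ref{hn}, the torsion-freeness input, or the appeal to Theorem \ref{ku} --- appears in your proposal, and without them the Morita formalism alone cannot produce the isomorphism.
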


From this theorem, combined with  Lemma \ref{morita-prop}, we deduce 
\begin{cor} The action of $\HH$ on $\BM$ yields the following isomorphisms of filtered algebras:
\[\HH\iso \End_{_{\hpp ^{op}}}\, \BM,\quad\text{\em resp.}\quad
\HS\iso \hpp.\]
\end{cor}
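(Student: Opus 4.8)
The plan is to deduce the Corollary directly from Theorem \ref{main} together with the Morita-type facts of Lemma \ref{morita-prop}, by a ``Rees-to-specialization'' argument: the statements about $\HH$ and $\HS$ are specializations at $\hb=1$ of the statements about $\HH_\hb=\nil(\t\aff,\WW)$ and its spherical subalgebra proved in Theorem \ref{main}, and one just has to check that the relevant constructions are compatible with setting $\hb=1$.

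First I would record the graded module isomorphism of Theorem \ref{main}(i), $\nil(\t\aff,\WW)\pe\iso\BM_\hb$. Specializing at $\hb=1$, i.e. applying $\C\otimes_{\C[\hb]}(-)$ along $\hb\mapsto1$, the left-hand side becomes $\HH\pe$ and, since $\BM_\hb=(\sym\t)[\hb]\o_{(\sym\t)^W[\hb]}\hpp_\hb$ is by construction the Rees module of the filtered $(\HH,\hpp)$-bimodule $\BM$, the right-hand side becomes $\BM$. Here one should note $\BM_\hb$ is $\hb$-torsion-free (it is a Rees module of a filtered module with separating filtration, the filtration on $\hpp$ being separating by Corollary \ref{noeth}), so specialization behaves well; thus $u\mapsto u(1_\BM)$ induces an isomorphism $\HH\pe\iso\BM$ of filtered left $\HH$-modules, and passing to associated graded recovers a corresponding statement over $\gr\HH$. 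Next, by Lemma \ref{morita-prop} applied with $c=1$, the algebra $\nil_1=\HH$ is Morita equivalent to $\pe\HH\pe=\HS$ via the bimodule $\HH\pe$, and the action map gives an isomorphism $\HH\iso\End_{\HS}(\HH\pe)$; transporting this isomorphism along $\HH\pe\cong\BM$ yields the first claimed isomorphism
\[
\HH\ \iso\ \End_{\HS}\BM\ \iso\ \End_{\hpp^{op}}\BM,
\]
where the second identification uses that the right $\HS$-action on $\BM=\HH\pe$ coincides, via Theorem \ref{main}(ii) (which identifies $\pe\nil(\t\aff,\WW)\pe$ with $\hpp_\hb$ and hence after specialization $\HS$ with $\hpp$), with the right $\hpp$-module structure on $\BM$ coming from the Miura bimodule. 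One checks this is an isomorphism of \emph{filtered} algebras because every map in sight (the action map, the Morita identification, the isomorphism $\HH\pe\cong\BM$) was produced compatibly with the Kazhdan filtrations, as already arranged in the paragraph preceding Theorem \ref{main}.

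For the ``resp.'' statement, I would simply sandwich $\pe$ on both sides: applying $\pe(-)\pe$ to the previous isomorphism, or more directly specializing Theorem \ref{main}(ii) at $\hb=1$, gives $\HS=\pe\HH\pe\iso\pe(\End_{\hpp^{op}}\BM)\pe=\End_{\hpp^{op}}(\pe\BM)$, and since $\pe\BM=\BM^W=\hpp$ (this is exactly the identity $\BM^W\cong\hpp$ used in the proof of Proposition \ref{act}), the right-hand side is $\End_{\hpp^{op}}\hpp=\hpp$ — an endomorphism ring of a rank-one free module is the ring itself. Thus $\HS\iso\hpp$ as filtered algebras, and the commuting square in Theorem \ref{main}(ii) specializes to show the subalgebra $(\sym\t)^W\sset\HS$ goes to $Z(\ug)\sset\hpp$ via the Harish-Chandra isomorphism, recovering the full content of Theorem \ref{main-intro}.

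The main obstacle, as usual in these ``Rees $\leadsto$ specialization'' arguments, is making the specialization step clean: one must know that $\hb-1$ acts injectively on $\BM_\hb$, on $\HH_\hb=\nil(\t\aff,\WW)$, and on $\pe\HH_\hb\pe=\hpp_\hb$, so that $\C\otimes_{\C[\hb]}(-)$ at $\hb=1$ preserves the isomorphisms of Theorem \ref{main} and does not introduce spurious kernels or cokernels. For $\nil(\t\aff,\WW)$ this is immediate from the $\C[\t\aff^*]$-freeness noted in the proof of Proposition \ref{hh-class}; for $\BM_\hb$ and $\hpp_\hb$ it follows from the separatedness of the Kazhdan filtrations established in Corollary \ref{noeth} and Lemma \ref{afilt}. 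Once torsion-freeness in $\hb$ is in hand, everything else is formal: the isomorphism of filtered algebras is obtained by specialization, and compatibility with the distinguished commutative subalgebras is read off from the commuting square. I would also remark that the isomorphism $\gr\HS\cong\C[\fZ]$ of the introduction then follows by combining this Corollary with Theorem \ref{class-intro}, which is consistent with \cite[Proposition 2.8]{BFM}.
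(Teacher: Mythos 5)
Your argument is correct and is essentially the paper's intended deduction: the Corollary is obtained exactly by specializing the graded (Rees-level) isomorphisms of Theorem \ref{main} at $\hb=1$ and invoking the Morita statements of Lemma \ref{morita-prop}, which is all the paper itself does. The only cosmetic remark is that $\hb$-torsion-freeness is not actually needed for the $\hb=1$ specialization (exhaustiveness of the filtrations already gives $A_\hb/(\hb-1)A_\hb\cong A$, and applying the specialization functor to an isomorphism can introduce no kernel or cokernel), so your caution on that point is harmless but superfluous.
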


\subsection{Proof of Theorem \ref{main}} \label{m-pf}
To simplify the notation, we put $\nil=\nil(\t\aff,\WW)=\HH_\hb$,
resp. $\nil^{\op{sph}}=\pe\nil(\t\aff,\WW)\pe=\HS_\hb$.
Also, write $\sh$ for $(\sym\t)[\hb]=\C[\t\aff^*]$, resp.
$\C(\t\aff^*)$ for the field of fractions of ~$\sh$.

The natural $\nil$-action on $\sh$ makes $\sh$ 
a cyclic  left $\nil$-module with generator $1_{\sh}$. 
The annihilator of $1_{\sh}$ is   a left ideal $J\sset \nil$  generated by the
elements $\th_w,\ w\in\wt W$. Since $\pe1_{\sh}=1_{\sh}$, the map $\nil\to \sh,\ h\mto h1_{\sh}$, descends to
a surjection $\nil\pe\onto\sh$ with kernel $J\pe$. 
This gives an $\nil^{\op{sph}}$-module surjection $\nil^{\op{sph}}=\pe\nil\pe\onto \sh^W=\pe\sh$, with kernel
$\js:=\pe J\pe$, a left ideal of $\nil^{\op{sph}}$.

The action of $\nil$ on $\BM_\hb$ gives a map 
$\tau: \pe\nil\pe\to\hpp_\hb=\BM^W_\hb,\ h\mto h 1_{\hpp_\hb}$.
The image of $\js \sset\nil^{\op{sph}}$ under the map $\tau$
is a left $\nil^{\op{sph}}$-submodule $\tau(\js)\sset \hph$. This submodule is not,
a priori, stable under left multiplication by $\hph$. We
let $K:=\hpp_\hb\cdot\tau(\js)$ be a
 left ideal of the algebra $\hph$ generated by $\tau(\js)$.
Let $\ks=\cap_{i\in \Z}\ (K+\hb^i\hph)$ be the `closure' of $K\sset \hph$
in  $\hb$-adic topology. 

\begin{lem}\label{hn} One has a  decomposition
$\hph=Z_\hb\g\oplus \ks$ as a direct sum of $Z_\hb\g$-stable graded subspaces. 
\end{lem}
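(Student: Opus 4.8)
The statement is a graded-vector-space decomposition $\hph = Z_\hb\g \oplus \ks$, and the natural strategy is to pass to the associated graded (i.e.\ specialize $\hb\to 0$) and use the explicit description of everything on the classical side, then lift back. Recall $\hph|_{\hb=0} = \gr\hpp \cong \C[\fZ]$ by Theorem \ref{class-intro}, and $Z_\hb\g|_{\hb=0} = \gr Z\g \cong \vkap^*(\C[\g^*\ds\Ad^*G])=\C[\fc]$, a maximal commutative (Poisson-central-free) subalgebra. On the other hand $\nil^{\op{sph}}|_{\hb=0}=\gr\HS \cong \C[T\times\t^*,\frac{t^\al-1}{\calph}]^W$ by Proposition \ref{hh-class}, and $\js$ is the kernel of $\nil^{\op{sph}}\onto \sh^W=\C[\t^*]^W$, so $\js|_{\hb=0}$ is the ideal of the subscheme $\{t=1\}$ of $\Spec\C[T\times\t^*,\frac{t^\al-1}{\calph}]^W$. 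Via the isomorphism $\Phi$ of Proposition \ref{hh-class} and the identification $\fZ \cong (\Spec\C[T\times\t^*,\frac{t^\al-1}{\calph}])/W$ of Theorem \ref{bfm}, the map $\tau$ at $\hb=0$ becomes (essentially) the restriction-to-the-unit-section map $\C[\fZ]\to\C[\fZ\times_\fc\t^*]\to\C[pr_T^{-1}(1)]$, and $K|_{\hb=0}$ is the ideal it generates, i.e.\ the ideal of $pr_T^{-1}(1)\subset \fZ$.

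\textbf{Key steps.} First I would compute $\tau$ explicitly at $\hb=0$: using formula \eqref{Phi2} and the definition of $\BM_\hb = (\sym\t)[\hb]\o_{(\sym\t)^W[\hb]}\hpp_\hb$, identify the composite $\pe\nil\pe \xrightarrow{\tau} \hph \to \hph|_{\hb=0}=\C[\fZ]$ with the algebra map $\gr\HS \to \C[\fZ]$ dual to the iso $\fZ\cong (\Spec\C[T\times\t^*,\frac{t^\al-1}{\calph}])/W$, precomposed with restriction of the $\t^*$-coordinate to the zero section. Second, I would show that $\ks|_{\hb=0}$ (the closure at $\hb=0$ of $K$) equals the defining ideal $I$ of the fiber $pr_T^{-1}(1)\subset\fZ$: one inclusion is immediate since $\tau(\js)\subset I$; the reverse uses that $\tau(\js)$ already generates $I$ after inverting $\C[\t^*]^W$ (as in the proof of Proposition \ref{hh-class}, $\Ker$ of the analogous map is a torsion module) combined with $\hb$-torsion-freeness of $\hph$ coming from Lemma \ref{afilt}. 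Third — the crux — I would prove the \emph{classical} decomposition $\C[\fZ] = \C[\fc] \oplus I$, i.e.\ that the composite $\C[\fc]=\C[\g^*\ds\Ad^*G]\xrightarrow{\vpi^*}\C[\fZ]\to\C[pr_T^{-1}(1)]$ is an isomorphism of $\C[\fc]$-modules; equivalently $pr_T^{-1}(1)\to\fc$ is an isomorphism. This follows from the second/third constructions of $\fZ$: the Kostant slice gives $\fZ\supset\cg_\s\cong\fZ$, and on $\cg_\s=\{(x,g)\in\s\times G : \Ad^*g(x)=x\}$ the Beilinson–Kazhdan map $\vkap$ sends $(x,g)$ to $(g\bmod[B,B],\,x\bmod\b^\perp)$ where $\b$ is determined by $x$ via Kostant's inclusion $G_x\subset B$; the condition $pr_T=1$ forces $g\in[B,B]\cap G_x$, and since $G_x\subset B$ is abelian and unipotent-by-torus with the torus part mapping isomorphically to $T$, the fiber over each $x\in\s\cong\fc$ is a single point. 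Fourth, lift the decomposition from $\hb=0$ to all $\hb$: since $\hph$ is $\hb$-torsion-free and $Z_\hb\g$, $\ks$ are graded $\chb$-submodules reducing mod $\hb$ to complementary subspaces that span, a standard graded Nakayama / successive-approximation argument (using $\ks = \cap_i(K+\hb^i\hph)$ is $\hb$-adically closed) gives $\hph = Z_\hb\g \oplus \ks$; the $Z_\hb\g$-stability of $\ks$ is automatic since $K$ is a left ideal and $Z_\hb\g$ maps into the center — more precisely, $Z_\hb\g$ is central in $\hph$ by Corollary \ref{noeth}'s proof context (or directly: $Z\g$ is central in $\hpp$), so left multiplication by $Z_\hb\g$ preserves the left ideal $\ks$.

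\textbf{Main obstacle.} The hard part will be the third step: establishing cleanly that the fiber $pr_T^{-1}(1)$ maps isomorphically onto $\fc$, equivalently that $\C[\fZ]=\C[\fc]\oplus I$ as a $\C[\fc]$-module. The set-theoretic statement (the fiber over each central character is a single point $(s, e)$ with $s$ in the Kostant slice) is essentially Kostant's theorem that the centralizer $G_x$ of a regular $x$ contains no nontrivial element of $[B,B]\cap G_x$ when $x$ is in the slice — this needs care because $G_x$ need not be connected, so I must check that the component group also maps injectively to $T$ (it does, via $G_x\hookrightarrow B\onto T$, since $[B,B]$ is unipotent and $G_x$ is abelian with unipotent part inside $[B,B]$ and torus part mapping isomorphically to $T$). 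The scheme-theoretic refinement (flatness/reducedness so that $\C[pr_T^{-1}(1)]\cong\C[\fc]$ with no nilpotents) should follow from Proposition \ref{bfm-loc}/Proposition \ref{hh-class} flatness of $\gr\HS$ over $\C[\t^*]^W$ together with the explicit computation, but assembling this into a clean "direct sum" statement rather than just "isomorphism after localization" is where the real work lies; I would route it through the torsion-freeness of $\nil(\t\aff,\wt W)\ho$ over $\C[\t^*]$ exactly as in the proof of Proposition \ref{hh-class}.
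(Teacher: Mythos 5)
Your overall skeleton (specialize at $\hb=0$, establish a decomposition there, then lift using the $\hb$-adic closedness of $\ks$) is the same as the paper's, but your step 3 --- the step you yourself call the crux --- fails on two counts. First, a misidentification: $\ks|_{\hb=0}$, i.e.\ the image of $\tau(\js)$ in $\gr\hpp=\C[\fZ]$, is \emph{not} the defining ideal of the fiber of $pr_T$ over $1\in T$ (note also that $pr_T$ lives on $\Lambda\cong\fZ\times_\fc\t^*$, not on $\fZ$). Transported through the isomorphisms of Proposition \ref{hh-class}, Theorem \ref{bfm} and Theorem \ref{class-intro}, the ideal coming from $\js_0$ is generated by the divided elements of the form $\frac{t^\mu-1}{\cmu}$ (see \eqref{Phi2}), not by the elements $t^\mu-1$; since $t^\mu-1=\cmu\cdot\frac{t^\mu-1}{\cmu}$, it strictly contains the ideal of $\{t=1\}$, and its zero locus is essentially the unit section $\fc\into\fZ$ rather than the fiber over $1$. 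Second, the geometric claim you reduce to --- that $pr_T\inv(1)\to\fc$ is an isomorphism --- is false. By the Example following Proposition \ref{lagr}, $pr_T\inv(1)\cong\bigcup_{S\sset\Ups}\overline{\t_S\reg}\times Z(L_S^{\op{der}},\bbe_S)$: over the point of $\fc$ corresponding to the principal nilpotent the fiber contains the whole unipotent centralizer $Z(G,\bbe)$, of dimension $\dim\t$. Your group-theoretic argument breaks precisely there: for $x$ regular but not regular semisimple, the unipotent part of $G_x$ lies in $[B,B]$ and is killed by $G_x\into B\onto T$, and the torus part of $G_x$ is only a proper subtorus of $T$, so the fiber of $G_x\to T$ over $1$ is positive-dimensional. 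Consequently $\C[\fc]\to\C[pr_T\inv(1)]$ is not even surjective, and the decomposition $\C[\fZ]=\C[\fc]\oplus I_{\mathrm{fiber}}$ you aim for is false, so the argument cannot be repaired by a flatness or reducedness refinement.

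What the paper does at $\hb=0$ is purely algebraic and avoids the fiber over $1\in T$ altogether: via the isomorphism $\Phi$ of Proposition \ref{hh-class} and formula \eqref{Phi2}, the ideal corresponding to $J_0$ is generated by the elements $\frac{t^\mu-1}{\cmu}$, and from this explicit description one reads off a graded, $\sym\t$-stable splitting $\C[T\times \t^*, \frac{t^\al-1}{\calph},\ \al\in R]=\C[\t^*]\oplus I$; taking $W$-invariants gives $\gr\hpp=(\sym\t)^W\oplus\tau_0(\js_0)$, which is the input \eqref{zzz} for the lifting step. A smaller caution on your step 4: the ``standard graded Nakayama'' lift is not automatic here because the grading on $\hph$ is unbounded below (this is exactly the paper's Remark after \eqref{aaa1}); the lift works because $Z_\hb\g$ is concentrated in non-negative degrees, so negative-degree elements are absorbed into $\ks$ by the $\hb$-adic closedness, while non-negative degrees are handled by induction. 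You mention closedness but should make the non-negativity of the grading of $Z_\hb\g$ an explicit ingredient.
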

\begin{proof} 
We first prove an analogue of the statement of the lemma at $\hb=0$. To this end, let
 $\nil_0:=\nil|_{\hb=0}$, resp. $\nil_0^{\op{sph}}=\pe\nil_0\pe$.
We have a chain of 
isomorphisms
\beq{chain1}
(\gr\HH)\pe=\nil_0\pe\ccong \C[T\times \t^*, \mbox{$\frac{t^\calph-1}{\calph}$},\ \al\in R]\ccong
\C[\fZ\times_\fc\t^*]\ccong \sym\t\o_{(\sym\t)^W}\gr\hpp,
\eeq
where the first, resp. second, and third, isomorphism is Proposition \ref{hh-class},
resp. Theorem \ref{bfm}  and Theorem \ref{class-intro}.

Let $J_0\sset \nil_0\pe$ be the image of $J$ in $\nil_0=\nil/\hb\nil$
and let $I\sset \C[T\times \t^*, \frac{t^\calph-1}{\calph},\ \al\in R]$, resp. $L\sset \sym\t\o_{(\sym\t)^W}\gr\hpp$,
be an ideal that corresponds to $J_0$ via the first, resp. the composite,
 isomorphism  in \eqref{chain1}. The proof of Proposition
\ref{hh-class}, combined with  \eqref{Phi}-\eqref{Phi2},
shows that  the ideal $I$ is generated
by the elements $\frac{t^\mu-1}{\cmu},\ \mu\in\BX^*$. 
It is clear from this description, cf. also \eqref{Phi2}, that one has 
 a direct sum decomposition
 $\C[T\times \t^*, \frac{t^\al-1}{\calph},\ \al\in R]=
\C[\t^*]\oplus I$, of graded $\sym\t$-stable subspaces. Hence, one has a decomposition
$\sym\t\o_{(\sym\t)^W}\gr\hpp=\sym\t\oplus L$. 
Taking $W$-invariants and writing   $\js_0= J^W_0$, we deduce the following isomorphism that respects the direct sum decompositions:
\beq{chain2}
\nil^{\op{sph}}_0=(\sym\t)^W\oplus \js_0\ \ccong\  \gr\hpp=(\sym\t)^W\oplus  L^W.
\eeq

It is straightforward to check that the isomorphism $\nil^{\op{sph}}_0\iso\gr\hpp$, in \eqref{chain2},
agrees with $\tau_0: \nil^{\op{sph}}_0\iso\gr\hpp$, the specialization  of the map $\tau: \nil^{\op{sph}} \to \hpp_\hb$ at $\hb=0$.
The image of   the ideal $\js_0$  equals $\tau_0(\js_0)$, by definition.
We conclude that $L^W=\tau_0(\js_0)$. Thus, the decomposition on the right of \eqref{chain2} reads
 \beq{zzz}\hpp_\hb/\hb\hpp_\hb=(Z_\hb\g/\hb\cd Z_\hb\g)\ \oplus\ \tau_0(\js_0).
\eeq

This   is equivalent
to a pair of equations
\beq{aaa1}
\hpp_\hb= Z_\hb\g+\tau(\js)+\hb\hpp_\hb,
\qquad  Z_\hb\g\ \cap\ (\tau(\js)+\hb\hpp_\hb)\ =\ \hb\hpp_\hb.
\eeq

\begin{rem} The   decomposition in  \eqref{zzz} does not necessarily imply that $\hpp_\hb=Z_\hb\g\oplus \tau(\js)$,
 since  the $\Z$-grading on $\hpp_\hb$ is
 not bounded below. 
\erem

It is immediate from definitions that
$\ks$ is a graded left ideal of $\hpp_\hb$. Furthermore,  the ideals $\ks$ and $K$ have the same image 
 in $\gr\hpp=\hpp_\hb/\hb\hpp_\hb$. Also, the image of $K$ in  $\gr\hpp$ equals  $\gr\hpp\cdot\tau_0(\js_0)$,
by definition.
We have shown that  $L^W=\tau_0(\js_0)$, where  $L^W$  is an ideal of  $\gr\hpp$. 
Therefore,
$\tau_0(\js_0)$ is  an ideal. Hence, inside $\gr\hpp$, we have
 $\js_01_{\gr\hpp}=\gr\hpp\cdot\tau_0(\js_0)$. Combining these observations together, we deduce
\beq{aaa}
\tau(\js)+\hb\hpp_\hb=K+\hb\hpp_\hb=\ks+\hb\hpp_\hb.
\eeq
Using \eqref{aaa}, we see that  equations \eqref{aaa1} take the following form:
\beq{aaa2}\hpp_\hb= Z_\hb\g+\ks+\hb\hpp_\hb,
\qquad  Z_\hb\g\ \cap\ (\ks+\hb\hpp_\hb)\ =\ \hb\hpp_\hb.
\eeq

We claim that the equations in \eqref{aaa2} imply the required direct sum decomposition
$\hpp_\hb= {Z_\hb\g\oplus \ks}$. Indeed, let $a\in \hpp_\hb$. Separating homogeneous components, one may
assume without loss of generality that $a$ is homogeneous of some  degree $n\in\Z$. Note that $ Z_\hb\g$
has no  homogeneous components of negative degree. Hence, in the case $n<0$, the first equation
in \eqref{aaa2} implies that there exist $k_1\in \ks$ and $a_1\in \hpp_\hb$, of degrees $n$ and $n-1$, respectively,
such that $a=k_1+\hb a_1$. Applying the same argument to $a_1$, one gets $a_1=k_2+\hb a_2$, and so on.
Thus, for any $m\geq1$, one can find $k_1,\ldots, k_m\in K$ such that one has
$a=\hb^{m+1}a_{m+1}+\sum_{i=1}^m \hb^{i-1} k_i$.
Since $K$ is an ideal, we have $\sum_{i=1}^m \hb^{i-1} k_i\in K$ so
$a\in \hb^{m+1}\hpp_\hb+K$. We conclude that $a\in \cap_{m\in\Z}\ (\hb^{m+1}\hpp_\hb+K)=\ks$.
In the case $n\geq0$, one shows by induction on $n$ using a similar argument  that $a\in  Z_\hb\g+\ks$. 
Finally, from  the second equation in \eqref{aaa2}, one deduces that $ Z_\hb\g\cap \ks=0$.
\end{proof}

\begin{proof}[Proof of Theorem \ref{main}] 
We know that $\nil^{\op{sph}}$ and $\nil\pe$ are torsion free
$\sh^W$-modules. Therefore, the map
$\nil^{\op{sph}}\to \C(\t\aff^*)^W\o_{\sh^W}\nil^{\op{sph}}$
 is injective. 
Further, the algebra $\fZ$ is a flat scheme over $\fc$, see  Section  \ref{1st-sec}.
Hence, the algebra $\gr\hpp\cong\C[\fZ]$ is a  torsion free
$\sh^W$-module.
It follows, since the filtration on $\hpp$ is separating by Lemma \ref{noeth},
that  the algebra $\hpp_\hb$ is  torsion free over $\sh^W=Z_\hb\g$.
We conclude that the map
$\hph\to\C(\t\aff^*)^W\o_{\sh^W}\hph$ is injective. It follows 
 that the map $\tau: \nil^{\op{sph}}\to \hph$ is injective. 

To prove that $\tau$ is surjective, recall from Section  \ref{daha-sec}   that $\nil$ is a subalgebra of $\wt W\aff\ltimes \C(\t\aff^*)$.
The latter algebra acts faithfully on $\C(\t\aff^*)$.
We have a decomposition $\C(\t\aff^*)=\pe\C(\t\aff^*)\oplus (1-\pe)\C(\t\aff^*)$.
The action of the  subalgebra $\pe(W\ltimes \C(\t\aff^*))\pe$  respects this decomposition,
moreover, the action of this subalgebra kills the second direct summand.
Further, we have $\nil^{\op{sph}}\sset \pe(W\ltimes \C(\t\aff^*))\pe$, so the action of $\nil^{\op{sph}}$ kills
$(1-\pe)\C(\t\aff^*)$ and $\sh^W=\pe\sh\sset \pe\C(\t\aff^*)$ is an $\nil^{\op{sph}}$-stable subspace.

Next, we use the map $\tau$ to pullback the  $\nil^{\op{sph}}$-action to a $\hph$-action on  $\pe\C(\t\aff^*)=\C(\t\aff^*)^W$.
Thanks to the injectivity of $\tau$, the resulting  $\hph$-module is faithful.
We claim that the subspace $\sh^W\sset \C(\t\aff^*)^W$ is stable under the $\hph$-action.
To see this, we use Lemma \ref{hn}. Specifically, it follows from
the lemma  that the map $\tau$ induces a bijection
$\nil^{\op{sph}}/\js\cong\sh^W\ \iso\ Z_\hb\g\cong\hpp_\hb/\ks$. Via this bijection, the 
action of the algebra $\nil^{\op{sph}}$ on $\sh^W$ can be extended to a $\hph$-action on $\sh^W\cong\hpp_\hb/\ks$. 
Is is immediate from the construction that the action so defined agrees with the above defined  $\hph$-action on 
$\C(\t\aff^*)^W$. The claim follows.

We now apply Theorem \ref{ku} to conclude that inside $\End_\C \C(\t\aff^*)$,
one has an inclusion $\hph\subseteq\tau(\nil^{\op{sph}})$.  Thus $\tau$ is surjective, hence, it is an isomorphism.
All other statements of the theorem easily follow from this.
\end{proof}
\subsection{Proof of Theorem \ref{cat-eq}}\label{pf-cat}
It follows from Proposition \ref{a-lem} and  the isomorphism $(\dd(G)\dss\n_l^\psi)\dss\n_r^\psi$ of Lemma \ref{all},
that the functor $M\mto M^{\n_l^\psi\times\times\n_r^\psi}$
gives an equivalence $(\dd_G, N_l\times N_r,\psi\times\psi)\mmod\cong\hpp\mmod$. 
The equation in Lemma \ref{afilt}(3) shows that this functor induces, by restriction to holonomic 
modules, an equivalence $\Wh\cong\hpp\hol$.
Also, by Theorem \ref{main-intro}, we have an equivalence
$\hpp\hol \cong \HS\hol$. The corresponding
equivalence $\hpp\hol \cong \HH\hol$ follows from this by Morita equivalence,
see Lemma \ref{morita-prop}, since the latter  equivalence
takes holonomic modules to holonomic modules, by definition.

Let ${\scr{C}}$ be the category of not necessarily holonomic 
$W\ltimes \dd(T)$-modules $M$ such that \eqref{hhdd} is an isomorphism. 
Further, let ${\scr{C}}^{\op{sph}}$ be the category of not necessarily holonomic 
$\dd(T)^W$-modules $L$
such that the map $\sym\t\,\o_{(\sym\t)^W}\,L\to \dd(T)\o_{\dd(T)^W} L$ is an isomorphism. 
It follows from  Morita equivalence of the algebras $\dd(T)^W$ and  $W\ltimes \dd(T)$
 that the functor $\dd(T)\o_{\dd(T)^W}(-)$ provides an equivalence
${\scr{C}}^{\op{sph}}\iso {\scr{C}}$.

Further, we have a functor $\HH\mmod\to{\scr{C}}$
 induced by the algebra embedding
$W\ltimes \dd(T)\into\HH$.
The proof of Proposition \ref{hh-morita}  shows that  this functor is  fully faithful.
Thus, this functor is an equivalence.  Using Morita equivalences above, we deduce
that the  functor  
$\HS\mmod\to {\scr{C}}^{\op{sph}}$, induced by the algebra embedding $\dd(T)^W\into\HS$,
is also  an equivalence. 

To complete the proof of the
theorem we must show that an object  $L\in {\scr{C}}^{\op{sph}}$ is holonomic  as
a $\dd(T)^W$-module if and only if it is holonomic as
an $\HS$-module. 
Assume first that  $L$ is holonomic as an $\HS$-module, equivalently, as a  $\hpp$-module.
Then, Proposition \ref{M-exact} implies that $\BM\o_\hpp L=\sym\t\o_{(\sym\t)^W} L$ is a
holonomic  $\dd(T)$-module. Hence, $L$ is holonomic as a $\dd(T)^W$-module.

Conversely,  assume that  $L$ is holonomic  as a $\dd(T)^W$-module $L$.
Choose a good filtration on $L$,  and
equip $M:=\sym\t\o_{(\sym\t)^W} L$, resp. $\HH\pe\o_{\dd(T)^W} L$ and   $\hpp\o_{\dd(T)^W} L$, with the tensor product filtrations.
The map $\T^*T\to(\T^*T)/W$ being finite, we deduce that $\supp \gr M$ is a Lagrangian subvariety of $\T^*T$. 
We have the following 
diagram, cf. Proposition \ref{lagr}:
\[
\xymatrix{
\fZ=\Spec(\gr\hpp)\ =\ \Spec(\gr\HS)\
&\ \Lambda\ \ar[r]^<>(0.5){p_{\T^*T}}
\ar[l]_<>(0.5){p_\fZ}&
\ \T^*T=\Spec(\gr\dd(T)).
}
\]
where $p_\fZ$, resp. $p_{\T^*T}$, is the first, resp. second,
projection and we have used Theorem \ref{main-intro} to identify $\gr\hpp$ with
$\gr\HS$, resp. Theorem \ref{class-intro} to identify $\Spec\gr\hpp$
with $\fZ$.
Thus, 
we have
\[\supp\gr(\hpp\o_{\dd(T)^W} L)\ =\ \supp(\gr\hpp\,\o_{\gr\dd(T)^W}\,\gr L)\ \subseteq\ p_\fZ(p_{\T^*T}\inv(\supp\gr M)).\]
By  Proposition \ref{lagr},  $\Lambda$ is a Lagrangian subvariety of $\fZ\times\T^*T$.
It follows
that $p_\fZ(p_{\T^*T}\inv(\supp\gr M))$ is an isotropic subvariety of $\fZ$. 
We deduce that $\hpp\o_{\dd(T)^W} L$ is a holonomic $\hpp$-module.
On the other hand, the action of the algebra   $\HS\cong\hpp$ on $L$ gives a canonical surjection
$\hpp\o_{\dd(T)^W} L\onto L$, of $\HS$-modules. This implies that
$L$ is holonomic as an $\hpp$-module.
\qed

\bibliographystyle{plain}

\end{document}